\theoremstyle{plain}
\newtheorem{thm}{Theorem}[section]
\newtheorem*{thm*}{Theorem}
\newtheorem*{satz*}{Satz}
\newtheorem{prop}[thm]{Proposition}
\newtheorem*{prop*}{Proposition}
\newtheorem{lem}[thm]{Lemma}
\newtheorem*{lem*}{Lemma}
\newtheorem{cor}[thm]{Corollary}
\newtheorem*{cor*}{Corollary}
\theoremstyle{definition}
\newtheorem{defi}[thm]{Definition}
\newtheorem*{defi*}{Definition}
\newtheorem{notation}[thm]{Notation}
\theoremstyle{remark}
\newtheorem{rem}[thm]{Remark}
\newtheorem*{rem*}{Remark}
\newtheorem*{example*}{Example}
\numberwithin{equation}{section}
\newcommand{\QQ}{\mathbb{Q}}
\newcommand{\RR}{\mathbb{R}}
\newcommand{\CC}{\mathbb{C}}
\newcommand{\ZZ}{\mathbb{Z}}
\newcommand{\NN}{\mathbb{N}}
\newcommand{\FF}{\mathbb{F}}
\newcommand\sumst{\mathop{{\sum}^\ast}\limits}
\newcommand{\mods}[1]{\,(\mathrm{mod}\,{#1})}
\renewcommand{\geq}{\geqslant}
\renewcommand{\leq}{\leqslant}
\DeclareMathOperator{\Kl}{Kl}
\DeclareMathOperator{\K}{K} 
\DeclareMathOperator{\Hyp}{Hyp} 
\begin{document}

\title[Non-vanishing of central values with angular restrictions]{Non-vanishing of central values of L-functions with angular restrictions}
\author{Filippo Berta and Svenja zur Verth}

\begin{abstract}
We study the angular restrictions for the second moment of toroidal families of $L$-functions using the general theory of trace functions. With the mollification technique we deduce non-vanishing of a positive proportion. Our two main ingredients are classification results of Katz to determine the sheaves at play and a recent result of Fouvry, Kowalski, Michel and Sawin to bound bilinear sums of their trace functions. 
\end{abstract}

\maketitle
\tableofcontents

\section{Introduction}

In this paper, we continue the study of the average of $L$-values at the critical point over \emph{toroidal families} as introduced by Fouvry, Kowalski and Michel in \cite{FKMsecondmom}. In \emph{loc. cit.} the authors prove that for any $a,b\in \ZZ\smallsetminus\{0\}$  there exist constants $\delta>0$, $C(a,b) \neq 0$, so that for any prime number $q$ the following asymptotic formula\footnote{See Theorem \ref{thm second moment k=0} for details.} holds.
 \begin{equation}\label{eq:introduction0moment}
     \frac{1}{q-1}\sum_{\chi\mods{q}}L\left(\chi^a,\tfrac{1}{2}\right)L\left(\chi^b,\tfrac{1}{2}\right) = \delta_{a+b=0}\log q + C(a,b) + O(q^{-\delta}).
 \end{equation}
Using Cauchy-Schwarz inequality twice, the evaluation of the above moment yields (see \emph{loc. cit.}, Remark 1.2(3))
\begin{equation}\label{eq:intro 0 proportion of non vanishing}
    \vert \{\chi \mods{q}\ | \ L(\chi^a,\tfrac{1}{2})L(\chi^b,\tfrac{1}{2})\neq 0\}\vert \gg \frac{q}{(\log q)^4}, 
\end{equation}
where $q$ goes to infinity along the prime numbers.

In the present paper, we improve \eqref{eq:intro 0 proportion of non vanishing} in two ways: We prove a positive proportion for the simultaneous non-vanishing of $L(\chi^a,\tfrac{1}{2})$ and $L(\chi^b,\tfrac{1}{2})$ and we consider subfamilies of characters $\chi$ whose angle lies in a given section of the circle. More precisely for a primitive Dirichlet character $\chi$ we define its angle $\theta(\chi)$ as the unique number in $(-\pi,\pi]$ so that $\varepsilon(\chi) = e^{i\theta(\chi)}$, where $\varepsilon(\chi)$ is the classical normalized Gauss sum of $\chi$. For the trivial character the angle $\theta (\chi)$ is not defined, we hence restrict our summation to primitive Dirichlet characters. Let $I \subset (-\pi,\pi]$ be an open interval, then we study
\begin{equation*}
    \frac{1}{q-1}\sumst_{\substack{\chi \mods{q}\\ \theta(\chi) \in I}}L\left(\chi^a,\tfrac{1}{2}\right)L\left(\chi^b,\tfrac{1}{2}\right)
\end{equation*}
or rather the smooth version 
\begin{equation*}
    \frac{1}{q-1}\sumst_{\substack{\chi \mods{q}}}L\left(\chi^a,\tfrac 12\right)L\left(\chi^b,\tfrac 12\right)\phi(\theta(\chi)),
\end{equation*}
where $\phi \in C_c^{\infty}((-\pi,\pi]).$  
In this paper, we show the following positive proportion of non-vanishing 
\begin{thm}\label{thm:intro nonvanishing}Let $I \subset (-\pi,\pi]$ be a non-empty interval and let $a,b$ be two non-zero integers. For any prime $q$ we define
\[E(a,b;q,I)= \vert{ \{\chi  \mods q\ \text{ non-trivial s.t. } \  L(\chi^a,\tfrac 12)L(\chi^b,\tfrac 12)\neq  0,\  \theta(\chi) \in I  \}}\vert.\]
Then there exists a constant $C(a,b,I)>0$ so that
    \begin{equation}\label{eq:intrononvanishing}
   \frac{1}{q-1}E(a,b;q,I) \geq C(a,b,I) 
\end{equation}
for $q\to \infty$.
\end{thm}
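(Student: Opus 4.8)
\emph{Proof strategy.} The plan is to run the Iwaniec--Sarnak mollification method, carrying the Gauss-sum weight $\phi(\theta(\chi))$ along throughout. Write $\mcL(\chi):=L(\chi^a,\tfrac12)L(\chi^b,\tfrac12)$, and first replace $\mathbf{1}_I$ by a smooth weight: fix $\phi\in C_c^\infty((-\pi,\pi])$ with $0\leq\phi\leq\mathbf{1}_I$ and $\int\phi>0$ (shrinking $I$ slightly if needed so $\phi$ extends smoothly to the circle), so that $E(a,b;q,I)\geq|\{\chi\neq\chi_0:\mcL(\chi)\neq0\}\cap\mathrm{supp}\,\phi|$. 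Expanding $\phi$ in a rapidly convergent Fourier series $\phi(\theta)=\sum_{k\in\ZZ}c_k e^{ik\theta}$ and using that $\varepsilon(\chi)^k=\tau(\chi)^k q^{-k/2}$ for $k\geq0$ (its conjugate for $k<0$), with $\tau(\chi)^k=\sum_{x_1,\dots,x_k\bmod q}\chi(x_1\cdots x_k)\,e(\tfrac{x_1+\cdots+x_k}{q})$, the weight $\phi(\theta(\chi))$ becomes a combination of multiplicative--additive hybrids; this is the form in which the angular restriction enters the character sums, and the frequency $k=0$, with $c_0=\widehat\phi(0)>0$, will provide the main terms.

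Next introduce the mollifier $\psi(\chi)=\psi_a(\chi)\psi_b(\chi)$, $\psi_c(\chi)=\sum_{\ell\leq q^{\vartheta}}\frac{\mu(\ell)}{\sqrt\ell}P\!\left(\tfrac{\log(q^\vartheta/\ell)}{\vartheta\log q}\right)\chi(\ell^c)$ for $c\in\{a,b\}$, so that $L(\chi^c,\tfrac12)\psi_c(\chi)$ telescopes to $1+(\text{tail supported on }m>q^{\vartheta})$; here $\vartheta>0$ is taken below the exponent admissible in the moment evaluations, and $P$ is a polynomial to be chosen. Since $\mcL(\chi)\psi(\chi)$ vanishes off the non-vanishing set and $\mathrm{supp}\,\phi\subset I$, Cauchy--Schwarz gives
\[
E(a,b;q,I)\ \geq\ \frac{1}{\|\phi\|_\infty}\cdot\frac{\bigl|\sumst_{\chi}\mcL(\chi)\psi(\chi)\phi(\theta(\chi))\bigr|^2}{\sumst_{\chi}\bigl|\mcL(\chi)\psi(\chi)\bigr|^2\phi(\theta(\chi))}\,,
\]
so it suffices to choose $P$ making the numerator $\gg q^2$ and the denominator $\ll q$. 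After expanding $\psi$ (resp.\ $|\psi|^2$) and $\phi(\theta(\chi))$, both are finite linear combinations, over $k$ and over the mollifier variables, of twisted moments: for the numerator, the angular-weighted twisted second moments $\sumst_{\chi}L(\chi^a,\tfrac12)L(\chi^b,\tfrac12)\chi(\ell_1^a\ell_2^b)\tau(\chi)^k$, which are the angular refinements of \eqref{eq:introduction0moment} (cf.\ Theorem \ref{thm second moment k=0}) established in this paper; for the denominator, the angular-weighted twisted fourth moments $\sumst_{\chi}L(\chi^a,\tfrac12)L(\chi^{-a},\tfrac12)L(\chi^b,\tfrac12)L(\chi^{-b},\tfrac12)\,\chi(\ell_1)\overline{\chi(\ell_2)}\,\tau(\chi)^k$, i.e.\ the second moment of the toroidal family of products $\{\mcL(\chi)\}$. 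Granting asymptotics for these with an error saving a power of $q$ after the summation over the mollifier, the $k\neq0$ contributions are lower order (equidistribution of the angles), and the $k=0$ diagonal assembles into main terms $\sim c_1(P,\phi)q$ and $\sim c_2(P,\phi)q$ with $c_1,c_2>0$; summing against the mollifier coefficients reduces, by a standard Mellin computation, to a quadratic functional in $P$ and $P'$. Any admissible $P\not\equiv0$ then yields $E(a,b;q,I)\geq(c_1^2/(c_2\|\phi\|_\infty))q(1+o(1))$; optimizing $c_1^2/c_2$ over $P$ (an explicit variational problem) gives the best constant, and taking $C(a,b,I)$ slightly smaller, with the finitely many small $q$ absorbed, completes the proof.

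The main obstacle is the asymptotic evaluation of the angular-weighted twisted fourth moment, with a power-saving error uniform in the mollifier variables up to $q^{\vartheta}$ for some fixed $\vartheta>0$. After opening the four $L$-values by their approximate functional equations and $\tau(\chi)^k$ by its definition, executing the $\chi$-sum by orthogonality modulo $q$ leaves a diagonal --- from the various pairings of the four $L$-value variables, shifted by multiples of $q$, contributing the main term --- together with an off-diagonal made of incomplete sums $\sum_n t(n)$ and bilinear sums $\sum_{m,n}\alpha_m\beta_n\,t(mn)$, where $t$ is the trace function of an explicit $\ell$-adic sheaf built from additive-character, Kummer, and hyper-Kloosterman (or Gauss-sum) sheaves. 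Deligne's bound alone gives square-root cancellation in complete sums but is lost once the mollifier is nontrivial, so the two essential inputs are: (i) Katz's classification results, used to verify that these sheaves have the expected large geometric monodromy --- in particular that no accidental geometrically trivial, Kummer, or Artin--Schreier component hides an unaccounted diagonal; and (ii) the Fouvry--Kowalski--Michel--Sawin bilinear estimate for trace functions, which gains in the $(m,n)$ variables and is exactly what makes $\vartheta>0$ admissible (a treatment by completion alone would force $\vartheta=0$, hence no mollification and no positive proportion). Organizing the many diagonal pairings into a clean main term, uniformly in the mollifier variables and with enough decay in $k$ to sum the Fourier series of $\phi$, is then comparatively routine; the corresponding evaluation for the numerator's second moment is the easier, already-established one.
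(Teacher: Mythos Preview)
Your overall architecture --- smooth the indicator, expand $\phi(\theta(\chi))=\sum_k\widehat\phi(k)\varepsilon(\chi)^k$, mollify, and apply Cauchy--Schwarz --- is exactly the paper's. The numerator is handled just as you say, via the angular-weighted twisted second moments (Theorem~\ref{thm:introduction1} and the $k=0$ adaptation of \cite{FKMsecondmom}).

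The gap is in the denominator. You propose to evaluate the \emph{angular-weighted} mollified fourth moment
\[
\sumst_{\chi}\bigl|\mcL(\chi)\psi(\chi)\bigr|^2\phi(\theta(\chi))
\ =\ \sum_{k}\widehat\phi(k)\sumst_{\chi}\bigl|L(\chi^a,\tfrac12)L(\chi^b,\tfrac12)\psi(\chi)\bigr|^2\varepsilon(\chi)^k
\]
asymptotically with power saving, and identify this as the main obstacle. This is much harder than necessary and is not carried out in the paper (nor, to our knowledge, anywhere in the literature for toroidal families with $k\neq 0$). Since you only need an \emph{upper bound} and $0\leq\phi\leq 1$, simply drop the weight:
\[
\sumst_{\chi}\bigl|\mcL(\chi)\psi(\chi)\bigr|^2\phi(\theta(\chi))\ \leq\ \sumst_{\chi}\bigl|\mcL(\chi)\psi(\chi)\bigr|^2.
\]
Then one more Cauchy--Schwarz separates $a$ and $b$, reducing to the ordinary mollified fourth moment $\tfrac{1}{q-1}\sumst_\chi|L(\chi,\tfrac12)M_L(\chi)|^4$, which is Zacharias's theorem \cite{Zacharias2016MollificationOT}. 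No new trace-function work is needed for the denominator; the Katz classification and the Fouvry--Kowalski--Michel--Sawin bilinear bound enter only through the second-moment numerator.

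A second, subtler issue: you assume ``enough decay in $k$ to sum the Fourier series''. The error term $O_k(q^{-\eta})$ in Theorem~\ref{thm:introduction1} has an \emph{inexplicit} dependence on $k$ (it comes from the complexity of the underlying $\ell$-adic sheaves; see the remark following Theorem~\ref{thm:introduction1}). You therefore cannot sum $\sum_k\widehat\phi(k)\cdot O_k(q^{-\eta})$ directly. The paper truncates at $K=K(\epsilon)$ with $\sum_{|k|>K}|\widehat\phi(k)|<\epsilon$: for $|k|\leq K$ it applies Theorem~\ref{thm:introduction1} (finitely many $k$, so the implicit constants are harmless), and for $|k|>K$ it bounds the tail by $\epsilon$ times the \emph{unweighted} mollified second moment $\sum_\chi|L(\chi,\tfrac12)M_L(\chi)|^2\ll q$ via $|\varepsilon(\chi)|=1$ and Cauchy--Schwarz. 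This truncation, not decay of the error in $k$, is what makes the Fourier expansion summable; it is also the ``beneficial side effect'' of mollification flagged in Remark~\ref{rem:side effect mollification technique}.
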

For a more precise statement see Theorem \ref{thm:non-vanishing}. To prove Theorem \ref{thm:intro nonvanishing}, we combine the mollification technique, results of Zacharias \cite{Zacharias2016MollificationOT} on the mollified fourth moments, the evaluation \eqref{eq:introduction0moment} and our main technical result, which is the following.
\begin{thm}\label{thm:introduction1}
    Let $q$ be a sufficiently large prime. Let $a,b$ non-zero integers such that for $|k|=1$ we have $(a,b)$ is neither $(1,1)$ nor $(-1,-1)$. Let $1\leq l_1,l_2\leq q^\alpha$ for some $\alpha>0$, small. There exists an absolute and effective constant $\eta = \eta(a,b,\alpha) > 0$ and a constant $c(l_1,l_2,b)\geq0$ (absolutely bounded in terms of $q^\alpha$) so that for any integer $k\neq 0$ the following asymptotic formulae are true:
    \begin{align*}
        \frac{1}{q-1}&\sumst_{\chi \mods{q}}\chi(l_1^al_2^b)L\left(\chi^a,\tfrac{1}{2}\right)L\left(\chi^b,\tfrac{1}{2}\right)\varepsilon(\chi)^k \\ 
     =&\begin{cases} \frac{\zeta\left(\frac{|b|+1}{2}\right)}{l_1^{a/2}l_2^{b/2}}+ O_k( q^{-\eta}) &\text{if }(a,b,k)=(1,b>1,-1) \text{ or } (-1,b<-1,1) \\
          c(l_1,l_2,b)+ O_k( q^{-\eta}) &\text{if } (a,b,k)=(1,b<0,-1) \text{ or } (-1,b>0,1) \\
          O_k( q^{-\eta}) &\text{else.}
        \end{cases}
    \end{align*}
\end{thm}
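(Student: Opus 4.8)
The strategy is to open the $L$-functions by their approximate functional equations and expand the twisted Gauss sum $\varepsilon(\chi)^k$, reducing the character average to a sum of the form $\frac{1}{q-1}\sum_{\chi\bmod q}^{\ast}\chi(m)\bar\chi(n)\varepsilon(\chi)^k$ against smooth weights supported on $m,n\ll q^{(1+\epsilon)/2}$ (roughly), with the arithmetic factors $l_1^a l_2^b$ and the $k$-th power of the Gauss sum folded into $m$ and $n$. The key input is that such a character sum is, by the standard orthogonality / Gauss sum manipulation, equal to a hyper-Kloosterman-type sum: up to normalization one gets $q^{-1/2}$ times the trace function of an explicit sheaf evaluated at $m n^{-1}$ (or $mn$, depending on the sign conventions), plus a main term coming from the ``diagonal'' contribution where the trace function is large. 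Writing $\varepsilon(\chi)^k$ in terms of Kloosterman sums is exactly the point where Katz's classification results enter: one identifies the relevant $\ell$-adic sheaf $\mathcal{F}_k$ on $\mathbb{G}_m/\mathbb{F}_q$, determines its geometric monodromy group, conductor, and — crucially — whether it is geometrically trivial or geometrically isomorphic to an Artin–Schreier or Kummer sheaf. The cases in the statement correspond precisely to when this sheaf \emph{is} (a shift of) such a trivial-monodromy object, producing the explicit main terms $\zeta\big(\tfrac{|b|+1}{2}\big) l_1^{-a/2} l_2^{-b/2}$ and $c(l_1,l_2,b)$; in all other cases the sheaf has big monodromy and no main term survives.

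Concretely I would proceed as follows. First, use the approximate functional equation (keeping the root numbers, since these interact with $\varepsilon(\chi)^k$) to write $L(\chi^a,\tfrac12)L(\chi^b,\tfrac12)$ as a main sum $\sum_{m,n} \chi^a(m)\chi^b(n) V(\dots)/\sqrt{mn}$ plus a dual sum with $\bar\chi$ and an extra factor $\varepsilon(\chi^a)\varepsilon(\chi^b)$; the latter combine with $\varepsilon(\chi)^k$ into $\varepsilon(\chi)^{k'}$ for a shifted exponent $k'\in\{k, k\pm 2\}$ depending on the diagonal piece, and one controls $\varepsilon(\chi^a)$ in terms of $\varepsilon(\chi)$ and Jacobi sums (this is where the hypothesis $|k|=1\Rightarrow (a,b)\neq(\pm1,\pm1)$ is used to avoid the degenerate configuration where everything collapses to a main term that is genuinely of a different shape). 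Second, apply orthogonality over $\chi\bmod q$: this converts $\frac{1}{q-1}\sum_\chi^{\ast}\chi(u)\bar\chi(v)\varepsilon(\chi)^{k'}$ into (a normalization constant times) a trace function $t_{\mathcal{F}_{k'}}(uv^{-1})$, using the standard fact that $\sum_\chi \chi(x)\varepsilon(\chi)^{k'}$ is a $\pm k'$-fold multiplicative convolution of Gauss sums, i.e.\ a hyper-Kloosterman sum $\mathrm{Kl}_{k'+?}$ up to lower-order terms. Third, invoke Katz's work to name the sheaf: determine rank, ramification, and geometric monodromy; read off from the classification exactly when $\mathcal{F}_{k'}$ is geometrically constant or geometrically a Kummer/Artin–Schreier sheaf (these are the listed exceptional $(a,b,k)$), compute the resulting explicit constant by summing the now-manageable main term over $m,n$ (recognizing the $\zeta$-value as $\sum_{n\geq1} n^{-(|b|+1)/2}$ from the surviving diagonal), and in all remaining cases record that the monodromy is large.

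The remaining — and main — obstacle is bounding the ``off-diagonal'' bilinear sum $\sum_{m}\sum_{n} \alpha_m \beta_n\, t_{\mathcal{F}}(mn^{-1})$ (or $t_{\mathcal{F}}(mn)$) over the ranges $m,n\ll q^{1/2+\epsilon}$, where a trivial bound is just barely insufficient and Weil's bound for a single variable only gives square-root cancellation in each variable separately, i.e.\ nothing past the trivial range. This is exactly where the recent Fouvry–Kowalski–Michel–Sawin bound on bilinear sums of trace functions is the essential black box: provided $\mathcal{F}$ is an isotypic middle-extension sheaf with no Kummer-type pullback invariance (guaranteed by the big-monodromy output of Step 3 in the non-exceptional cases) and bounded conductor, their estimate gives a power saving $q^{-\eta}$ for bilinear sums with \emph{smooth} or \emph{divisor-bounded} coefficients in ranges of length up to $q^{1/2+\epsilon}$. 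One has to check the technical hypotheses of their theorem for each sheaf $\mathcal{F}_{k'}$ produced above (irreducibility, no geometric isomorphism to a Kummer pullback, conductor bounded independently of $q$), track the dependence of $\eta$ on $a,b,\alpha$, and handle the mild arithmetic twists by $l_1^a l_2^b$ (which only shift the argument and can be absorbed since $l_1,l_2\ll q^\alpha$ with $\alpha$ small). Assembling the main terms from Step 3 with these power-saving error terms yields the asymptotic formulae as stated.
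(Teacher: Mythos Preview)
Your high-level architecture --- approximate functional equation, orthogonality producing trace functions, Katz's classification, and the Fouvry--Kowalski--Michel--Sawin bilinear bound --- matches the paper's. However, two of your intermediate mechanisms are off in ways that matter.

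First, the dual term in the approximate functional equation does not simply ``shift the exponent'' to some $k' \in \{k, k\pm 2\}$. The factor $\varepsilon(\chi^a)\varepsilon(\chi^b)$ is not a power of $\varepsilon(\chi)$ times something benign; the paper instead opens each $\varepsilon(\chi^a) = q^{-1/2}\sum_y \chi(y^a) e_q(y)$ directly, and after orthogonality this produces a new sum
\[
\K_k^{a,b}(u) = q^{-(k+1)/2}\sum_{x_1\cdots x_k y_1^a y_2^b \equiv u}e_q(x_1+\cdots+x_k+y_1+y_2),
\]
a genuinely different family of trace functions from $\Kl_{k'}$. Identifying the attached sheaf requires the Hasse--Davenport relation to rewrite $\K_k^{a,b}$ as a hypergeometric sum $\Hyp(u;\bm\chi,\bm\theta)$ with character multisets built from $\bm\rho[a],\bm\rho[b],\bm\rho[1]^k$. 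Moreover the argument of the trace function is $\bar\xi m^{-a}n^{-b}$, not $mn^{-1}$; the exponents $a,b$ sit inside the argument and are essential to the sheaf analysis. Verifying the FKMS hypothesis (\emph{gallantness}) for these hypergeometric sheaves is a substantial case analysis, and several configurations (rank $1$, $SO_4$ monodromy, small symmetric groups) fail it and require a further Poisson-summation trick to pass to a related gallant sheaf.

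Second, and more seriously, your account of where the main terms come from is not correct. The exceptional triples $(1,b,-1)$ and $(-1,b,1)$ do \emph{not} correspond to the relevant sheaf becoming geometrically trivial or Kummer/Artin--Schreier. What actually happens is the functional equation $L(\bar\chi,\tfrac12) = \chi(-1)i^{-\kappa(\chi)}\varepsilon(\chi)^{-1}L(\chi,\tfrac12)$: when $a=1$ and $k=-1$, the factor $\varepsilon(\chi)^{-1}$ is absorbed by converting $L(\chi,\tfrac12)$ into $L(\bar\chi,\tfrac12)$, so the moment collapses to a genuine $k=0$ twisted second moment $\frac{1}{q-1}\sum^\ast_\chi \chi(\cdots)L(\bar\chi,\tfrac12)L(\chi^b,\tfrac12)$. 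The constants $\zeta(\tfrac{|b|+1}{2})/(l_1^{a/2}l_2^{b/2})$ and $c(l_1,l_2,b)$ then come from the diagonal $m = n^{|b|}l_2^{|b|}l_1$ in an adaptation of the FKM second-moment evaluation, not from any sheaf-theoretic degeneration. So the dichotomy in the theorem is analytic (functional-equation collapse to $k=0$), not geometric (monodromy collapse), and your Step~3 as written would not locate these main terms.
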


\begin{rem}\label{rem: the FE and its consequences}
    This includes all cases except $(a,b,k)=(1,1,-1)$ or $(-1,-1,1)$. 
By the functional equation one has 
\begin{equation*}\label{eq the FE and its consequences}
    \sumst_{\chi\mods q}L(\bar\chi,\tfrac 12)L(\chi^b,\tfrac 12)\varepsilon(\chi)^k=\sumst_{\chi\mods q}\chi(-1)i^{-\kappa(\chi)}L(\chi,\tfrac 12)L(\chi^b,\tfrac 12)\varepsilon(\chi)^{k-1},\end{equation*}
where $\kappa(\chi)\in {0,1}$. Therefore the cases with $a=1,\ k=-1$ or $a=-1,\ k=1$ degenerate to $k=0$. We then apply an adaptation of \eqref{eq:introduction0moment} to obtain the first two cases of Theorem \ref{thm:introduction1}. Thus the main novelty is the general case. 
\end{rem}

\begin{rem}The constants $C(a,b,I)$ in Theorem \ref{thm:intro nonvanishing} and $\eta$ in Theorem \ref{thm:introduction1} are explicitly computable. On the other hand, the dependency on $k$ in the term $O_k(q^{-\eta})$ in Theorem \ref{thm:introduction1} is not explicit. 
This is a consequence of using the algebraic geometric arguments in \cite{FKMSbilinear}. Their implicit constant depends upon a certain \emph{complexity} attached to the geometric data which certainly depends on $k$.\footnote{See \cite{SAWIN_FAREEFK_2023}, Definitions 3.2,6,6.6, for the formal Definition of the complexity. See also \cite{AppliedladicCohom}, Definition 4.3 for a more gentle first definition.} Currently, this dependence is not explicit, but it is expected to be polynomial. 
\end{rem}

\begin{rem}
    The mollification technique has an interesting beneficial side effect. To prove Theorem \ref{thm:intro nonvanishing} we approximate the interval $I$ with a smooth Fourier series and apply Theorem \ref{thm:introduction1}. Since the dependency in $k$ of the error term $O_k(q^{-\eta})$ is not known, we need the mollifier to tame the contribution coming from the tail of the Fourier series. See Remark \ref{rem:side effect mollification technique} for more details.
\end{rem}

Similar questions of non-vanishing of central $L$-values with angular restriction have already been considered in the literature. We describe two selected results qualitatively. Let $q$ be a prime, sufficiently large and $\theta$ an angle. Among other results, in \cite{Hough}, Theorem $4$, Hough proved that there exists a non-principal $\chi \mods{q}$ such that simultaneously the angle of $\chi$ is close to $\theta$ and the central value of $L(\chi,\tfrac{1}{2})$ is large. 
In \cite{Blomeretal} (see in particular Theorem 1.8), among many other results, Blomer, Fouvry, Kowalski, Michel, Mili\'cevi\'c, and Sawin consider the non-vanishing of $L(f\otimes \chi,\tfrac{1}{2})$ for $f$ a fixed cusp form on $\operatorname{GL}_2$. Defining the angle $\theta(f\otimes \chi)$ as the argument of the central $L$-value, they prove a non-zero proportion of non-vanishing for $L(f\otimes \chi,\tfrac{1}{2})$, when $\chi$ is ranging over Dirichlet characters so that $\theta(f\otimes \chi)$ lies in a prescribed angular sector.

We briefly discuss the strategy for the proof of Theorem \ref{thm:introduction1}. After applying standard techniques, like the approximate functional equation, for the study of Dirichlet $L$-functions, we reduce the proof of Theorem \ref{thm:introduction1} to obtaining estimates for bilinear correlation sums of the following shape:

\begin{equation}\label{eq:introbil}
    \frac{1}{(qMN)^{1/2}}\sum_{m\sim M}\sum_{n\sim N}\alpha_m K(m^an^b) \ll_{a,b,k} q^{-\eta},
\end{equation}
where $K(u) \colon \FF_q \to \CC$ is depending on $a$, $b$ and $k$ and of the shape 
\begin{equation*}
\Kl_k( u;q) \coloneqq \frac{1}{q^{(k-1)/2}}\sum_{\substack{ x_1,\dots, x_k \mods q\\  x_1\cdots  x_k \equiv  u \mods q}}e\left(\frac{ x_1+\dots+ x_k}{q}\right) \end{equation*} or
\begin{equation*}
    \K_k^{a,b}( u;q) \coloneqq \frac{1}{q^{(k+1)/2}} \sum_{\substack{ x_1,\dots, x_k,y_1,y_2 \mods q\\ x_1\cdots  x_ky_1^a y_2^b \equiv  u \mods q}}e\left(\frac{ x_1+\dots +  x_k+y_1+y_2}{q}\right).  \end{equation*}
As it is often the case in analytic number theory, this $K(u)$ is the trace function of a complex of $\ell$-adic sheaves. Although hoping for cancellation as in \eqref{eq:introbil} for every trace function is maybe too optimistic, in \cite{FKMSbilinear}, Fouvry, Kowalski, Michel and Sawin prove cancellation for a broad family of such trace functions, that is, trace functions whose attached complex of $\ell$-adic sheaves is \emph{gallant}. Their result plays a key role in our work. The property of being gallant is depending on the monodromy group of the sheaf attached to the trace function. To identify these our second main ingredient is the work of Katz, \cite{Katzbook} who developed the theory of how to identify the monodromy groups. Our paper is structured in the following way. 
\begin{itemize}
    \item 
In Section \ref{S: AFE, dyadic and PV} we begin the treatment of $ \frac{1}{q-1}\sumst_{\chi \mods{q}}\chi(l_1^al_2^b)L\left(\chi^a,\tfrac{1}{2}\right)L\left(\chi^b,\tfrac{1}{2}\right)\varepsilon(\chi)^k$ by applying the approximate functional equation. 
 \item In Section \ref{S: Hasse- Davenport} we relate all of our functions $K_k^{a,b}$ to hypergeometric sums and in particular to $\ell$-adic sheaves that are well studied. 
 \item In Section \ref{s: dyadic partition and poly vinogradov} we apply a smooth partition of unity and the P\'olya-Vinogradov method to reduce the range of summation to dyadic intervals $(M,2M]$ and $(N,2N]$ such that $q^{1/2-\delta}\leq M,N\leq q^{1/2+\delta}$. We furthermore reduce the general case in Theorem \ref{thm:introduction1} to estimating type I sums. 
 \item In Section \ref{S: Kl1} we 
 prove the estimates on the type I sum required in Section \ref{s: dyadic partition and poly vinogradov} in the case where the trace function is the additive character $e_q$.
 \item In Section \ref{s: weakly gkr} we introduce the definition of gallant and the work of \cite{FKMSbilinear}. We then use the classification theory of Katz to show that our trace functions are gallant. A few cases in which the trace function is not gallant require special treatment. For most of them we apply the Poisson summation formula to \eqref{eq:introbil} in one of the variables to change the trace function to a gallant one. This essentially shows the asymptotic formula in the general case in Theorem \ref{thm:introduction1}.
 \item In Section \ref{s: Adaptation second moment} we adapt the proof of \eqref{eq:introduction0moment} presented in \cite{FKMsecondmom} to obtain the two special cases in Theorem \ref{thm:introduction1} and finish its proof. 
 \item In Section \ref{s: non-vanishing} we introduce mollifiers and show that a positive proportion is non-vanishing as stated in Theorem \ref{thm:intro nonvanishing}.

\end{itemize}

\subsection*{Acknowledgments}
The second author was supported by SNSF, grant 200021-197045. 
The authors would like to thank Philippe Michel for countless discussions, feedback and for generously sharing his ideas with us, and \'Etienne Fouvry and Will Sawin for encouragements and useful comments.

\subsection*{Notation}
We write tuples of $a,b$ and $k$ always in the order $(a,b,k)$. As usual we denote $e^{2\pi i x}$ by $e(x)$ and $e\left(\frac{x}{q}\right)$ by $e_q(x)$.  
We denote the dyadic interval $l\in(L,2L]$ by $l\sim L$. 
We denote by $\bar x$ the inverse of $x$ modulo $q$.
We write $\sumst_{a\mods q}$ for the sum over integers $a$ between $1$ and $q$ that are coprime to $q$ and $\sumst_{\chi\mods q}$ for the sum over all primitive characters modulo $q$. We denote the trivial character by $\chi_0$.
We denote the Gauss sum by $$\varepsilon(\chi)=\frac{1}{q^{1/2}}\sum_{c\mods q}\chi(c)e\left(\frac cq\right).$$
For an integer $a \in \ZZ\smallsetminus\{0\}$ and a finite field $F$ we denote by $\bm{\rho}[a]$ the set of characters of $F^{\times}$ of order dividing $a$, that is $$\bm{\rho}[a] = \{\chi \in \widehat{F^{\times}}|\ \chi^a=1\}.$$ Of course, if $a \equiv b \mods{\vert{F}\vert}$, then $\bm{\rho}[a] = \bm{\rho}[b]$. It is also clear that $\bm{\rho}[a]=\bm{\rho}[-a]$.
For an integer $k\geq 1$ we denote by $\bm{\rho}[a]^k$ the multiset with the same elements as $\bm{\rho}[a]$ in which each element has multiplicity $k$ (instead of $1$). We also denote 
$$\bm{\rho}[a,b] = \bm{\rho}[a]\sqcup\bm{\rho}[b]$$ considered as a multiset.

By $f(x) = O(g(x)), f(x) \ll g(x)$ and $g(x) \gg f(x)$ we mean that there exists some constant $C > 0$ such that for all $x$ sufficiently large $|f(x)| \leq C|g(x)|$. We write $f(x)\asymp g(x)$ if $f(x)\ll g(x)$ and $f(x)\gg g(x)$.
By $f(x)=o(g(x))$ we mean that for all constants $C$ and for $x$ sufficiently large we have $|f(x)|<C|g(x)|$.

The symbol $\varepsilon$ denotes an arbitrarily small positive constant. The exact value of the constant may change between occurrences and even between lines. We use $o(1)$ equivalently. 

We call a trace function gallant if it is associated to a gallant sheaf defined in Definition \ref{def weakly grk}.

\section{The approximate functional equation}\label{S: AFE, dyadic and PV}
Assume $q$ to be prime and let $a,b,k,\xi\in \ZZ\smallsetminus\{0\}$ some non-zero integers, with $(ab\xi,q) = 1$.  
In this section we start the evaluation of 
\begin{equation}\label{eq: defi Mabk}
    M_{a,b,k}(\xi;q):=\frac{1}{q-1} \sumst_{\chi\mods q}\chi(\xi)L(\chi^a,\tfrac 12)L(\chi^b,\tfrac 12)\varepsilon(\chi)^k.\end{equation}
Let $k>0$ and $ u\not \equiv 0\mods q$. We denote the Kloosterman sum by 
\begin{equation}\label{eq defi Kl_k}
\Kl_k( u;q) \coloneqq \frac{1}{q^{(k-1)/2}}\sum_{\substack{ x_1,\dots, x_k \mods q\\  x_1\cdots  x_k \equiv  u \mods q}}e\left(\frac{ x_1+\dots+ x_k}{q}\right). \end{equation} Note that for $k=1$ we non-conventionally denote by $\Kl_1$ the additive character $e_q$. For $a,b$ integers we write
\begin{equation}\label{eq defi K_k^ab}
    \K_k^{a,b}( u;q) \coloneqq \frac{1}{q^{(k+1)/2}} \sum_{\substack{ x_1,\dots, x_k,y_1,y_2 \mods q\\ x_1\cdots  x_ky_1^a y_2^b \equiv  u \mods q}}e\left(\frac{ x_1+\dots +  x_k+y_1+y_2}{q}\right).  \end{equation}

\begin{rem}\label{rem xi with l1 and l2}
    When using the mollification method in Section \ref{s: non-vanishing} we choose $\xi=l_1^al_2^b$ for $l_1,l_2\leq L=q^\alpha$ with $\alpha>0$ small. If needed, we write $\xi$ in this more explicit form.  
\end{rem}
Let $X,Y$ be such that $XY=q^2$.
For $k>0$, $u\not\equiv 0 \mods{q}$ and $V$ which satisfies the following properties 
\begin{align}
    V(y) = 1+O(y^A)&,   \label{eq: decaying V 1} \\
    y^jV^{(j)}(y) \ll_{j,A} y^{-A}&, \qquad \forall A>1,j \in \ZZ_{\geq 1}, y>0.\label{eq: decaying V 2} 
\end{align}we define
\begin{align*}M_{a,b,\Kl_k}(u) &= \frac{1}{q^{1/2}}\sum_{\substack{m,n\geq 1\\(mn,q)=1}}\frac{1}{(mn)^{1/2}}V\left(\frac{mn}{X}\right)\Kl_k(um^{-a}n^{-b};q),\\
M_{a,b,\K_k^{a,b}}(u) &= \frac{1}{q^{1/2}}\sum_{\substack{m,n\geq 1\\(mn,q)=1}}\frac{1}{(mn)^{1/2}}V\left(\frac{mn}{Y}\right)\K^{a,b}_k(um^an^b;q).\\
\end{align*}
We do not display the dependency on $V$ in the notation. 
For us $V= V_{a,b,+}$ or $V=V_{a,b,-}$, where $V_{a,b,\pm }$ are chosen as in \cite{IwaKowAnalyticNT}, Theorem 5.3 with $G(u)=1$, more precisely let
\begin{equation*}
    V_{a,b,\chi}(y) = \frac{1}{2\pi i}\int_{\Re(u)=\sigma}y^{-u}\frac{\gamma_{t(\chi^a)}(1/2+u)}{\gamma_{t(\chi^a)}(1/2)}\frac{\gamma_{t(\chi^b)}(1/2+u)}{\gamma_{t(\chi^b)}(1/2)} \ \frac{\mathrm{d}u}{u},
\end{equation*}
    where $t(\chi)=\frac{1-\chi(-1)}{2}$ and $\gamma_j(u ) = \gamma(u+j)$ for $j\in \{0,1\}$. Note that $V_{a,b,\chi}$ only depends on $a,b$ and the parity of the character. We therefore denote  $\gamma_{t(\chi^a)}\gamma_{t(\chi^b)}$ by $\gamma_{(a,b,+)}$ or $\gamma_{(a,b,-)}$ and $V_{a,b,\chi}$ by $V_{a,b,+}$ or $V_{a,b,-}$, respectively.
   The functions $V_{a,b,\pm}$ satisfy the properties \eqref{eq: decaying V 1} and \eqref{eq: decaying V 2}.

\begin{prop}\label{prop: application AFE} Let $q$ be prime and $a,b,k\in \ZZ\smallsetminus\{0\}$. 
Let $$M_{a,b,k}(\xi;q)=\frac{1}{q-1} \sumst_{\chi\mods q}\chi(\xi)L(\chi^a,\tfrac 12)L(\chi^b,\tfrac 12)\varepsilon(\chi)^k.$$ 
    We can write $M_{a,b,k}(\xi;q)$ as a linear combination of $M_{a,b,\Kl_{\vert{k}\vert}}(\pm \bar \xi)$ and $M_{a,b,\K^{a.b}_{\vert{k}\vert}}(\pm \bar \xi)$ with absolutely bounded coefficients and an error of size $O\left(q^{-1/2}+q^{-1}\left(X^{1/2+\varepsilon}+Y^{1/2+\varepsilon}\right)\right)$. 
\end{prop}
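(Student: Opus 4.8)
The plan is to feed the approximate functional equation for the degree-two $L$-function $L(s,\chi^a)L(s,\chi^b)$ into the definition of $M_{a,b,k}(\xi;q)$ and then resolve the resulting sum over $\chi$ by orthogonality of Dirichlet characters. First, I record the functional equation. The completed $L$-function of $L(s,\chi^a)L(s,\chi^b)$ has conductor $q^2$, archimedean factor $\gamma_{t(\chi^a)}(s)\gamma_{t(\chi^b)}(s)$, and, when $\chi^a,\chi^b$ are primitive, root number $\omega_{a,b}(\chi)=i^{-t(\chi^a)-t(\chi^b)}\varepsilon(\chi^a)\varepsilon(\chi^b)$, with $t(\psi)=\tfrac{1-\psi(-1)}{2}$ depending only on the parity of $\psi$. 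Applying \cite{IwaKowAnalyticNT}, Theorem~5.3 with $G\equiv 1$ and the balance $XY=q^2$ gives, for every such $\chi$,
\begin{multline*}
L(\tfrac12,\chi^a)L(\tfrac12,\chi^b)=\sum_{(mn,q)=1}\frac{\chi(m^an^b)}{(mn)^{1/2}}V_{a,b,\pm}\!\Big(\frac{mn}{X}\Big)\\
+\,\omega_{a,b}(\chi)\sum_{(mn,q)=1}\frac{\overline{\chi}(m^an^b)}{(mn)^{1/2}}V_{a,b,\pm}\!\Big(\frac{mn}{Y}\Big),
\end{multline*}
the sign in $V_{a,b,\pm}$ being the parity of $\chi$ and $m^an^b$ being read modulo $q$. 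Since $q$ is prime, $\sumst_{\chi\bmod q}$ runs over all $\chi\neq\chi_0$; the $O(1)$ characters for which $\chi^a$ or $\chi^b$ is trivial are set aside and estimated trivially.

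Next, I substitute this identity into $M_{a,b,k}(\xi;q)$, split $\sumst_{\chi}$ into its even and odd parts via the projectors $\tfrac12(1\pm\chi(-1))$ — which fixes $V_{a,b,\pm}$ and turns $\omega_{a,b}(\chi)$ into $i^{\ast}\varepsilon(\chi^a)\varepsilon(\chi^b)$ with $i^{\ast}$ a constant depending only on $a,b$ and the chosen parity — and separate the ``main'' from the ``dual'' term. In each of the four resulting sums I expand the Gauss-sum factors as additive character sums over $\FF_q^\times$: for $k>0$ one writes $\varepsilon(\chi)^k=q^{-|k|/2}\sum_{x_1,\dots,x_{|k|}}\chi(x_1\cdots x_{|k|})\,e_q(x_1+\cdots+x_{|k|})$, and in the two dual sums one expands in addition $\varepsilon(\chi^a)=q^{-1/2}\sum_{y_1}\chi(y_1^a)e_q(y_1)$ and likewise $\varepsilon(\chi^b)$; the case $k<0$ is brought to the same shape by the primitivity identity $\varepsilon(\chi)^{-1}=\overline{\varepsilon(\chi)}=\chi(-1)\varepsilon(\overline{\chi})$ (or by the functional equation, cf.\ Remark~\ref{rem: the FE and its consequences}). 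Every constant produced along the way — $\tfrac12$, powers of $i$, and a sign $\chi(-1)^{|k|}=\chi((-1)^{|k|})$ absorbed into the replacement $\xi\mapsto(-1)^{|k|}\xi$ — is absolutely bounded.

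Then I exchange the order of summation and apply $\frac{1}{q-1}\sumst_{\chi\bmod q}\chi(v)=\mathbf{1}_{v\equiv 1}-\frac{1}{q-1}$, valid for $(v,q)=1$. The diagonal term $\mathbf{1}_{v\equiv 1}$ forces the product of the auxiliary variables to equal $\pm\bar\xi\,m^{-a}n^{-b}$ in the two main sums and $\pm\bar\xi\,m^an^b$ in the two dual sums, the sign coming precisely from the $\chi(-1)$ in the parity projector; by the definition of $\Kl_{|k|}$ in \eqref{eq defi Kl_k} and the analogous definition of $\K^{a,b}_{|k|}$, the inner additive character sums collapse into $\Kl_{|k|}(\pm\bar\xi\,m^{-a}n^{-b};q)$, respectively $\K^{a,b}_{|k|}(\pm\bar\xi\,m^an^b;q)$. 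Summing the remainder over $m,n$ against $(mn)^{-1/2}V_{a,b,\pm}(mn/X)$, respectively $(mn)^{-1/2}V_{a,b,\pm}(mn/Y)$, reproduces exactly $M_{a,b,\Kl_{|k|}}(\pm\bar\xi)$ and $M_{a,b,\K^{a,b}_{|k|}}(\pm\bar\xi)$ with the bounded constants of the previous step. The off-diagonal term $-\frac{1}{q-1}$ carries no congruence constraint, so there the auxiliary sums factor as powers of $\sum_{c\bmod q}e_q(c)=-1$, leaving only the harmless weighted divisor sums $\sum_{(mn,q)=1}(mn)^{-1/2}V_{a,b,\pm}(mn/X)\ll X^{1/2+\varepsilon}$, respectively $\ll Y^{1/2+\varepsilon}$; together with the contribution of the set-aside characters this produces the error $O(q^{-1/2}+q^{-1}(X^{1/2+\varepsilon}+Y^{1/2+\varepsilon}))$.

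The hard part is the bookkeeping in the last step: recognizing that the root number $\varepsilon(\chi^a)\varepsilon(\chi^b)$, once expanded and run through orthogonality, is exactly what produces the two extra variables $y_1,y_2$ bearing the weights $y_1^a,y_2^b$ of $\K^{a,b}_{|k|}$, and keeping the case $k<0$ consistent with the stated form. Everything else is routine — the approximate functional equation is standard, the orthogonality is that of Dirichlet characters — so that it only remains to check that every coefficient stays absolutely bounded and that the set-aside and non-primitive contributions are absorbed into the claimed error.
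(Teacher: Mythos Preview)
Your proof is correct and follows essentially the same route as the paper: apply the approximate functional equation, split by parity via $\tfrac12(1\pm\chi(-1))$, expand the Gauss-sum powers as additive character sums, and resolve the $\chi$-sum by orthogonality to produce $\Kl_{|k|}$ and $\K_{|k|}^{a,b}$; the reduction of $k<0$ via $\varepsilon(\chi)^{-1}=\chi(-1)\varepsilon(\overline\chi)$ is also exactly the paper's. The only cosmetic slip is that the off-diagonal auxiliary sums range over $\FF_q^\times$ (since they come from expanding Gauss sums of non-trivial characters), so $\sumst_{c\bmod q}e_q(c)=-1$ rather than $\sum_{c\bmod q}e_q(c)$.
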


\begin{proof} 
We first assume that $k>0$ and start with the analysis for the case where neither $\chi^a$ nor $\chi^b $ is trivial. 
The product $L(\chi^a,s)L(\chi^b,s)$ is again an $L$-function of conductor $q^2$ with root number $i^{-(t(\chi^a)+t(\chi^b))}\varepsilon(\chi^a)\varepsilon(\chi^b)$ and gamma factor $\gamma(\chi^a,s)\gamma(\chi^b,s)$. This $L$-function is entire unless $\chi^a$ or $\chi^b$ is trivial and never has a pole at $s=1/2$.
By the approximate functional equation (see \cite{IwaKowAnalyticNT}, Theorem 5.3) we have, according to the parity of $\chi$,

\begin{equation}\label{eq: AFE}
L(\chi^a,\tfrac 12)L(\chi^b,\tfrac 12)=\sum_{m,n\geq 1}\frac{\chi(m^an^b)}{(mn)^{1/2}}V_{a,b,\pm}\left(\frac{mn}{X}\right)+\frac{\varepsilon(\chi^a)\varepsilon(\chi^b)}{i^{t(\chi^a)+t(\chi^b)}}\sum_{m,n\geq 1}\frac{\bar \chi(m^an^b)}{(mn)^{1/2}}V_{a,b,\pm}\left(\frac{mn}{Y}\right).\end{equation}
 In the case where either $\chi^a$ or $\chi^b $ is trivial, we use the convexity bound as stated in \cite{IwaKowAnalyticNT}, Theorem 5.23 and the fact that $|\varepsilon(\chi)|\leq 1$ irrespective of the character. Hence for all characters $\chi$ we have
  \begin{equation}\label{eq: chia or chi b trivial}
        \chi(\xi)L(\chi^a,\tfrac 12)L(\chi^b,\tfrac 12)\varepsilon(\chi)^k\ll q^{1/2}.
    \end{equation} 
Furthermore 
\begin{align*}
    &\varepsilon(\chi)^k\sum_{m,n\geq 1}\frac{\chi(\xi m^an^b)}{(mn)^{1/2}}V_{a,b,\pm}\left(\frac{mn}{X}\right)\ll X^{1/2+\varepsilon},\\
    \varepsilon(\chi^a)\varepsilon(\chi^b) &\varepsilon(\chi)^k\sum_{m,n\geq 1}\frac{\bar \chi(\bar \xi m^an^b)}{(mn)^{1/2}}V_{a,b,\pm}\left(\frac{mn}{Y}\right)\ll Y^{1/2+\varepsilon}.
\end{align*}
Hence using \eqref{eq: AFE} and the above we now sum over all but the trivial character and obtain

\begin{align*}
M_{a,b,k}(\xi;q)=&\frac{1}{q-1}\sumst_{\chi\mods q}\sum_{m,n\geq 1}\frac{\chi(\xi m^an^b)}{(mn)^{1/2}}\varepsilon(\chi)^kV_{a,b,\pm}\left(\frac{mn}{X}\right)\\&+\frac{1}{q-1}\sumst_{\chi\mods q}\frac{\varepsilon(\chi^a)\varepsilon(\chi^b)\varepsilon(\chi)^k}{i^{t(\chi^a)+t(\chi^b)}}\sum_{m,n\geq 1}\frac{\bar \chi(\bar \xi m^an^b)}{(mn)^{1/2}}V_{a,b,\pm}\left(\frac{mn}{Y}\right)\\ & +O\left(q^{-1/2}+q^{-1}\left(X^{1/2+\varepsilon}+Y^{1/2+\varepsilon}\right)\right).
\end{align*}
Next we split the sum over the characters into the even and the odd parts, $M_{a,b,k}(\xi;q)^{even}$ and respectively $M_{a,b,k}(\xi;q)^{odd}$. Using the indicator functions $\frac{1\pm \chi(-1)}{2}$ and the fact that $t(\chi^a)=0$ for $\chi$ even, we obtain 
\begin{align*}
    M_{a,b,k}(\xi;q)^{even}=&\frac{1}{2(q-1)}\sumst_{\chi\mods q}\sum_{m,n\geq 1}\frac{\chi(\xi m^an^b)}{(mn)^{1/2}}\varepsilon(\chi)^kV_{a,b,+}\left(\frac{mn}{X}\right)\\&
    +\frac{1}{2(q-1)}\sumst_{\chi\mods q}\sum_{m,n\geq 1}\frac{\chi(-\xi m^an^b)}{(mn)^{1/2}}\varepsilon(\chi)^kV_{a,b,+}\left(\frac{mn}{X}\right)\\&
    +\frac{1}{2(q-1)}\sumst_{\chi\mods q}\varepsilon(\chi^a)\varepsilon(\chi^b)\varepsilon(\chi)^k\sum_{m,n\geq 1}\frac{\bar \chi(\bar \xi m^an^b)}{(mn)^{1/2}}V_{a,b,+}\left(\frac{mn}{Y}\right) \\&
    +\frac{1}{2(q-1)}\sumst_{\chi\mods q}\varepsilon(\chi^a)\varepsilon(\chi^b)\varepsilon(\chi)^k\sum_{m,n\geq 1}\frac{\bar \chi(-\bar \xi m^an^b)}{(mn)^{1/2}}V_{a,b,+}\left(\frac{mn}{Y}\right).
\end{align*}
The odd part can be expressed in a similar manner.
For the first summand of the even part this yields, denoting $V=V_{a,b,+}$, 
\begin{align*}
&  \frac{1}{(q-1)}\sumst_{\chi\mods q}\sum_{m,n\geq 1}\frac{\chi(\xi m^an^b)}{(mn)^{1/2}}\varepsilon(\chi)^kV\left(\frac{mn}{X}\right) \\
    & =\frac{1}{(q-1)q^{k/2}}\sumst_{\chi\mods q}\sum_{m,n\geq 1}\frac{V\left(\frac{mn}{X}\right)}{(mn)^{1/2}}\sum_{\substack{ x_1, \ldots,  x_k\mods q}}\chi( \xi m^an^bx_1\cdot \ldots \cdot  x_k)e\left(\frac{ x_1+\ldots+ x_k}{q}\right) \\
    & =\frac{1}{(q-1)q^{k/2}}\sum_{\substack{m,n\geq 1\\(mn,q)=1}}\frac{V\left(\frac{mn}{X}\right)}{(mn)^{1/2}}\sum_{\substack{ x_1, \ldots,  x_k\mods q}}e\left(\frac{ x_1+\ldots+ x_k}{q}\right)\left(-1+\sum_{\chi\mods q}\chi(\ldots)\right)\\
     & =\frac{1}{q^{k/2}}\sum_{\substack{m,n\geq 1\\(mn,q)=1}}\frac{1}{(mn)^{1/2}}V\left(\frac{mn}{X}\right)\sum_{\substack{ x_1,\ldots, x_k \mods q\\ \xi m^an^b x_1\cdot \ldots \cdot  x_k \equiv 1\mods q}}e\left(\frac{ x_1+\ldots+ x_k}{q}\right),
\end{align*}
where in the last step we have used orthogonality of characters for the sum over $\chi$. 
We treat the remaining three summands analogously and obtain

\begin{align*}
M_{a,b,k}^{even}(\xi;q)=\frac{1}{2q^{1/2}}\Bigg(&\sum_{\substack{m,n\geq 1\\(mn,q)=1}}\frac{1}{(mn)^{1/2}}V\left(\frac{mn}{X}\right)\left(\Kl_k(\bar \xi m^{-a}n^{-b};q)+\Kl_k(-\bar \xi m^{-a}n^{-b};q)\right)\\ + &\sum_{\substack{m,n\geq 1,\pm\\(mn,q)=1}}\frac{1}{(mn)^{1/2}}V\left(\frac{mn}{Y}\right)\left(\K^{a,b}_k(\bar \xi m^{a}n^{b};q)+\K^{a,b}_k(- \bar \xi m^{a}n^{b};q)\right)\Bigg). \end{align*}

For the odd part we note that $t(\chi^a)+t(\chi^b)$ depends only on the parity of $a,b$ if the parity of $\chi$ is set to be odd. We have $t(\chi^a)+t(\chi^b)=(a\mods 2)+(b\mods 2)$.  We then analogously, with this time $V=V_{a,b,-}$, have

\begin{align*}
M_{a,b,k}^{odd}(\xi;q)=\frac{(-i)^{\tau}}{2q^{1/2}}\Bigg(&\sum_{\substack{m,n\geq 1\\(mn,q)=1}}\frac{1}{(mn)^{1/2}}V\left(\frac{mn}{X}\right)\left(\Kl_k(\bar \xi m^{-a}n^{-b};q)-\Kl_k(-\bar \xi m^{-a}n^{-b};q)\right)\\ + &\sum_{\substack{m,n\geq 1,\pm\\(mn,q)=1}}\frac{1}{(mn)^{1/2}}V\left(\frac{mn}{Y}\right)\left(\K^{a,b}_k(\bar \xi m^{a}n^{b};q)-\K^{a,b}_k(-\bar \xi m^{a}n^{b};q)\right)\Bigg), \end{align*}
where $\tau=(a\mods 2)+(b\mods 2)$.
Using the definitions of $M_{a,b,\Kl_k}(u)$ and $M_{a,b,\K_k^{a,b}}(u)$ this shows the claim. 
For $k<0$ note that $\varepsilon(\chi)^{-1}=\varepsilon(\bar\chi)\chi(-1)$. Therefore the above analysis is analogous and leads to the same objects for $|k|$.
\end{proof}

\begin{rem}
    If we are only interested in non-vanishing results we could restrict ourselves to $a,b>0$ and $k\in \ZZ\smallsetminus \{0\}$ because $L(\chi,\tfrac 12)$ vanishes if and only if $L(\bar\chi,\tfrac 12)$ does. We do this in Section \ref{s: non-vanishing} but consider general $a$ and $b$ until then for possible future applications.
\end{rem}

\begin{rem}It can be useful to unbalance the functional equation, see for example \cite{FKMsecondmom} or \cite{radziwiłł2023nonvanishingtwiststextgl4mathbbamathbbqlfunctions}.
   In the present work, the choice $X= q^{1+\kappa}$ and $Y=q^{1-\kappa}$ for a small $\kappa>0$ would allow to treat the $\K_k^{a,b}$ trivially. 
  For the treatment of $\Kl_k$ we could hope to adapt the strategy in \cite{KMSkloostermanStratification} to obtain polynomial control in the complexity of the trace functions $\Kl_k$.
However, one has to verify that the strategy in \emph{loc. cit.} runs through with the presence of exponents $a$ and $b$ in the argument of the Kloosterman sum. We henceforth consider the balanced situation and treat $\K_k^{a,b}$ non-trivially.
   \end{rem}

\section{Reduction to hypergeometric sums}\label{S: Hasse- Davenport}
While the Kloosterman sums $\Kl_k$ are classically related to a well studied $\ell$-adic sheaf, this is not the case for $\K_k^{a,b}$. In this section we establish a relation of the latter to a well known $\ell$-adic sheaf using the Hasse-Davenport Relation. This section closely follows upcoming work of Fouvry, Kowalski, Michel and Sawin, \cite{FKMSthirdmom}. 
Recall that
\[\K_k^{a,b}(u;q) \coloneqq \frac{1}{q^{(k+1)/2}} \sum_{\substack{ x_1,\dots, x_k, y_1, y_2 \mods q\\ x_1\cdots  x_k y_1^a  y_2^b \equiv u \mods q}}e\left(\frac{ x_1+\dots +  x_k+ y_1+ y_2}{q}\right).  \]
More generally, for every finite field $F$ and a choice of a non-trivial $\ell$-adic character $\psi$ on $F$ we define 

\begin{equation}\label{eq:4.1}\K_k^{a,b}( u;\psi,F) \coloneqq \frac{1}{\vert{F}\vert^{(k+1)/2}} \sum_{\substack{ x_1,\dots, x_k, y_1, y_2 \in F\\ x_1\cdots  x_k y_1^a  y_2^b =  u}}\psi( x_1+\dots +  x_k+ y_1+ y_2).\end{equation}
For two functions $f,g$ on $F^{\times}$, we define the multiplicative convolution
\[f\ast g(u ) = \frac{1}{{\vert{F}\vert^{\frac{1}{2}}}}\sum_{xy=u}f(x)g(y).\]
We see that
\begin{align*}
&\left(\sum_{x^{a}=u_1}\psi( ax) \ast \sum_{x^{b}=u_2}\psi(bx) \ast \psi \ast \dots \ast \psi\right)(u) \\&\qquad\qquad\qquad= \frac{1}{\vert{F}\vert^{\frac{k+1}{2}}}\sum_{x_1^{a}x_2^{b}y_1\cdots y_k=u }\psi(ax_1+bx_2+y_1+\cdots+ y_k)\\ &\qquad\qquad\qquad=\K_k^{a,b}(a^{a}b^{b}u;\psi,F)
\end{align*}

By the general theory of convolution \cite{Katz2012Convolution} (see also \cite{Katzbook}), we deduce that every $\K_k^{a,b}(u;\psi,F)$ is the trace function of a complex of $\ell$-adic sheaves, mixed of weight $\leq 0$ and lisse on $\mathbb{G}_{m/F}$, which we denote $\mathcal{K}_k^{a,b}(\psi)$.

 To further study the correlation properties of $\K_k^{a,b}(\bullet;\psi)$ we first relate $\mathcal{K}^{a,b}_k(\psi)$ to a well-studied $\ell$-adic sheaf.

Let $F$ be a finite field and let $\bm{\chi}$ and $\bm{\theta}$ denote multisets (possibly empty) of $\ell$-adic multiplicative characters defined on $F^{\times}$ of cardinality $r$ and $t$ respectively. The hypergeometric function attached to $\bm{\chi}$ and $\bm{\theta}$ is the following function on $F^{\times}$:
\begin{equation*}
    \Hyp(u,\bm \chi,\bm \theta;\psi,F)=\frac{1}{|F|^{\frac{r+t-1}2}}\sum_{\substack{x_1,\ldots,x_r,y_1,\ldots,y_t\\\frac{x_1\cdots x_r}{y_1\cdots y_t}=u}}\psi(x_1+\ldots+x_r-y_1-\ldots-y_t)\prod_{i=1}^r\chi_i(x_i)\prod_{i=1}^t\bar\theta_i(y_i).
\end{equation*}

Each function $\operatorname{Hyp}(u;\bm{\chi},\bm{\theta};\psi,F)$ is the trace function of $\mathcal{H}(\bm{\chi},\bm{\theta},\psi)$, the so-called \emph{hypergeometric sheaf}, lisse over ${\mathbb{G}_m}_{/F}$. Hypergeometric sheaves have been extensively studied in \cite{Katzbook}, Chapter 8. 

We set some notation: we denote by $\sqcup$ the union of multisets, in particular if $\rho \in \bm{\chi},\bm{\theta}$ has multiplicity $r_1$ and $r_2$ respectively, then $\rho$ has multiplicity $r_1+r_2$ in $\bm{\chi}\sqcup \bm{\theta}.$ For two multisets $\bm{\chi}$ and $\bm{\theta}$ as above, the intersection $\bm{\chi}\sqcap \bm{\theta}$ is the multiset where $\rho$ appear in $\bm{\chi}\sqcap \bm{\theta}$ with multiplicity equal to the minimum of the multiplicities of $\rho$ in $\bm{\chi}$ and $\bm{\theta}$. The multiset $\bm{\chi}\smallsetminus\bm{\chi}\sqcap\bm{\theta}$ is obtained by subtracting multiplicities, that is if $\rho$ has multiplicity $m$ in $\bm{\chi}$ and multiplicity $n$ in $\bm{\chi}\sqcap \bm{\theta}$, then $\rho$ has multiplicity $m-n$ in $\bm{\chi}\smallsetminus\bm{\chi}\sqcap\bm{\theta}$. We say that $\bm{\chi}$ and $\bm{\theta}$ are distinct if $\bm{\chi}\smallsetminus\bm{\chi}\sqcap\bm{\theta}$ or $\bm{\theta}\smallsetminus\bm{\chi}\sqcap\bm{\theta}$ is non-empty (that is, it has an element of multiplicity strictly bigger than $0$).

The following can be found in \cite{Katz88}, § 5.6. 

\begin{lem}[Hasse-Davenport Relation]\label{lem:Hasse-davenport}
    Let $F$ be a finite field. Let $\psi \colon F\to \CC^{\times}$ and $\chi \colon F^{\times}\to \CC^{\times}$ characters. We denote by $\varepsilon(\psi,\chi) = \frac{1}{\vert{F}\vert^{\frac{1}{2}}}\sum_{x\in F^{\times}}\psi(x)\chi(x)$ the normalized Gauss sum. For $\lambda \in F$ we denote the character $F\ni x \mapsto \psi(\lambda x)$ by $\psi_\lambda$.

Then we have for any $N\in \NN$ so that $N|\vert{F^{\times}}\vert $ that 
\begin{equation*}
    -\varepsilon(\psi_N,\chi^N) = \varepsilon_N(\psi)\prod_{\substack{\rho \in \widehat{F^{\times}}\\\rho^N=1}}\varepsilon(\psi,\chi\rho),
\end{equation*}
where $\varepsilon_N(\psi)$ is a complex number of absolute value $1$. 
\end{lem}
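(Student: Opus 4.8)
The statement is the classical Hasse--Davenport product relation, phrased with normalized Gauss sums, and the plan is to reduce it to the unnormalized statement and then to a direct Jacobi-sum / Gauss-sum manipulation. First I would pass to the unnormalized Gauss sum $g(\psi,\chi) = \sum_{x\in F^\times}\psi(x)\chi(x)$, so that $\varepsilon(\psi,\chi) = |F|^{-1/2}g(\psi,\chi)$. The identity to be proven then becomes
\[
  -\,g(\psi_N,\chi^N) \;=\; \varepsilon_N(\psi)\,|F|^{(N-1)/2}\prod_{\rho^N=1} g(\psi,\chi\rho),
\]
where the factors of $|F|^{1/2}$ have been collected: there are $N$ Gauss sums on the right, each contributing $|F|^{-1/2}$ after normalization, versus one on the left, giving the stated power of $|F|$ once we declare $\varepsilon_N(\psi)$ to absorb the sign and the remaining unimodular constant. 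It is cleanest to treat the case $\chi^N \neq \mathbf 1$ first (so both sides are genuinely Gauss sums of non-trivial characters, of absolute value $|F|^{1/2}$ each), and then handle the degenerate cases $\chi^N=\mathbf 1$ or $\chi\rho=\mathbf 1$ for some $\rho$ separately, where a Gauss sum of the trivial character equals $-1$.

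The core computation I would carry out is the standard one: expand the product $\prod_{\rho^N=1} g(\psi,\chi\rho)$, use the identity $\sum_{\rho^N=1}\rho(x) = N\cdot \mathbf 1_{x\in (F^\times)^N}$ (orthogonality of the characters of $F^\times/(F^\times)^N$), and repeatedly apply the Gauss-sum--Jacobi-sum recursion $g(\psi,\chi_1)g(\psi,\chi_2) = J(\chi_1,\chi_2)\,g(\psi,\chi_1\chi_2)$ whenever $\chi_1\chi_2 \neq \mathbf 1$. Iterating over the $N$ characters $\chi,\chi\rho_1,\dots$ telescopes the product of Gauss sums into $g(\psi,\chi^N)$ times a product of Jacobi sums, while the change of variables $x\mapsto x^N$ on $(F^\times)^N$ together with the orthogonality relation converts $g(\psi,\chi^N)$ into $g(\psi_N,\chi^N)$ up to the explicit constant $\varepsilon_N(\psi)$, which one reads off as the resulting product of Jacobi sums (unimodular since each $J$ has absolute value $1$ in the non-degenerate range) times an overall sign. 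The divisibility hypothesis $N \mid |F^\times|$ is exactly what guarantees that the characters of order dividing $N$ form a group of order $N$ and that the $N$-th power map on $F^\times$ is $N$-to-$1$ onto $(F^\times)^N$, so it enters precisely at the orthogonality step.

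The main obstacle, and the only place requiring genuine care rather than bookkeeping, is the degenerate characters: if $\chi^N = \mathbf 1$ then $\chi$ is itself an $N$-th power character (one of the $\rho$), so one of the factors $g(\psi,\chi\rho)$ on the right is $g(\psi,\mathbf 1) = -1$, and simultaneously the left side $g(\psi_N,\mathbf 1)=-1$; one must check the remaining factors reassemble correctly and that the claimed power of $|F|$ and the unimodular constant $\varepsilon_N(\psi)$ still come out right (here some factors have absolute value $1$ rather than $|F|^{1/2}$, so the power counting is what pins down the normalization). Since the lemma as stated only asserts the existence of such an $\varepsilon_N(\psi)$ with $|\varepsilon_N(\psi)|=1$, rather than an explicit formula, it suffices to verify that both sides have the same absolute value in all cases — which follows from $|g(\psi,\chi)|\in\{1,|F|^{1/2}\}$ and matching the count of non-trivial characters among $\{\chi\rho : \rho^N=1\}$ (exactly $N-1$ of them when $\chi^N\neq\mathbf 1$, and $N-1$ again when $\chi^N=\mathbf 1$, since then $\chi$ runs over all $N$-th power characters and $\chi\rho$ is trivial for exactly one $\rho$) — and that the ratio is $F$-independent after the $x\mapsto x^N$ substitution. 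Alternatively, and perhaps most efficiently for write-up, I would simply cite \cite{Katz88}, §5.6, as the paper already does, and include only the normalization check turning the reference's statement into the displayed form above.
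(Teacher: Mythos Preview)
The paper does not prove this lemma at all: it simply states it with the attribution ``The following can be found in \cite{Katz88}, \S 5.6'' and moves on. Your final suggestion---cite \cite{Katz88}, \S 5.6, and at most check the normalization---is therefore exactly what the paper does.

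Your longer sketch is the standard proof of the Hasse--Davenport product relation and is essentially sound, but there are two bookkeeping slips you should fix if you write it out. First, in your unnormalized display the power of $\vert F\vert$ is on the wrong side: passing from $\varepsilon$ to $g$ gives
\[
  -\,g(\psi_N,\chi^N)\,\vert F\vert^{(N-1)/2}
  \;=\;\varepsilon_N(\psi)\prod_{\rho^N=1} g(\psi,\chi\rho),
\]
not $\vert F\vert^{(N-1)/2}$ multiplying the product (check absolute values: the left Gauss sum has size $\vert F\vert^{1/2}$, the product has size $\vert F\vert^{N/2}$). Second, your count of non-trivial characters among $\{\chi\rho:\rho^N=1\}$ is off in the non-degenerate case: if $\chi^N\neq\mathbf 1$ then \emph{all} $N$ of the $\chi\rho$ are non-trivial (since $\chi\rho=\mathbf 1$ would force $\chi^N=\rho^{-N}=\mathbf 1$), not $N-1$. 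These are harmless arithmetic errors and do not affect the strategy.
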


 Recall the definition of $\bm \rho[a]$ from the notation section. We deduce the following

\begin{lem}\label{l: hasse davenport}
Let $a,b\in \ZZ\smallsetminus\{0\}$ and let $F$ be a finite field so that $a,b|\vert{F^{\times}}\vert$. Let $u\neq 0.$
We can express the sum $\K_k^{a,b}$ as a hypergeometric sum in the following way: 

\begin{equation}\label{eq:Kk=hyp}    \K_k^{a,  b}(a^{a}b^{b}u;\psi,F) = \begin{cases}
        \varepsilon_a(\psi)\varepsilon_b(\psi)\operatorname{Hyp}(u;\bm{\rho}[a,b]\sqcup \bm{\rho}[1]^k,\varnothing;\psi,F)&  a,b>0, \\
        \varepsilon_a(\psi)\varepsilon_b(\psi)\operatorname{Hyp}(u;\bm{\rho}[a]\sqcup \bm{\rho}[1]^k,\bm{\rho}[b];\psi,F)& a>0>b, \\
        \varepsilon_a(\psi)\varepsilon_b(\psi)\operatorname{Hyp}(u;\bm{\rho}[b]\sqcup \bm{\rho}[1]^k,\bm{\rho}[a];\psi,F)& b>0>a,\\ \varepsilon_a(\psi)\varepsilon_b(\psi)
        \operatorname{Hyp}(u;\bm{\rho}[1]^k,\bm{\rho}[a]\sqcup \bm{\rho}[b];\psi,F)&0>a,b.\
    \end{cases}\end{equation}
\end{lem}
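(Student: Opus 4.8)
The plan is to deduce this from the Hasse--Davenport relation (Lemma~\ref{lem:Hasse-davenport}) by passing to the multiplicative Fourier transform on $F^{\times}$, under which the convolution $\ast$ becomes pointwise multiplication. Write $\psi$ also for its restriction to $F^{\times}$, and for $c \in \ZZ \smallsetminus \{0\}$ set $B_c(v) = \sum_{x \in F^{\times},\, x^c = v} \psi(cx)$. The computation carried out just before the statement shows that $\K_k^{a,b}(a^a b^b u; \psi, F) = \bigl(B_a \ast B_b \ast \psi^{\ast k}\bigr)(u)$ for every choice of signs of $a$ and $b$, where $\psi^{\ast k}$ is the $k$-fold convolution. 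On the other hand, unwinding its defining sum shows that $\Hyp(\,\cdot\,; \bm{\chi}, \bm{\theta}; \psi, F)$ is the $(|\bm{\chi}| + |\bm{\theta}|)$-fold normalized convolution of the functions $v \mapsto \psi(v)\chi(v)$ (one for each $\chi \in \bm{\chi}$) and $v \mapsto \psi(-1/v)\theta(v)$ (one for each $\theta \in \bm{\theta}$); in particular, since $\bm{\rho}[1] = \{\mathbf{1}\}$, the factor $\psi^{\ast k}$ accounts precisely for $\bm{\rho}[1]^k$ sitting in the numerator, and the convolution of two hypergeometric sums concatenates their parameter multisets. Thus the whole lemma reduces to the two ``building block'' identities
\[
B_c = \varepsilon_c(\psi)\,\Hyp(\,\cdot\,; \bm{\rho}[c], \varnothing; \psi, F) \ \ (c > 0), \qquad B_c = \varepsilon_c(\psi)\,\Hyp(\,\cdot\,; \varnothing, \bm{\rho}[c]; \psi, F) \ \ (c < 0),
\]
the four cases of the statement being exactly the four sign patterns of $(a,b)$.

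To prove the building block identities, I would apply the transform $\widehat{f}(\Lambda) = \sum_{v \in F^{\times}} f(v)\Lambda(v)$, for which $\widehat{f \ast g} = |F|^{-1/2}\widehat{f}\,\widehat{g}$ and $\widehat{\psi}(\Lambda) = |F|^{1/2}\varepsilon(\psi, \Lambda)$. The substitution $y = cx$ (legitimate because $c \mid |F^{\times}|$ forces $(c, |F|) = 1$) gives $\widehat{B_c}(\Lambda) = \sum_{x} \Lambda(x)^c\, \psi(cx) = \overline{\Lambda^c}(c)\,|F|^{1/2}\varepsilon(\psi, \Lambda^c)$, where $\Lambda^c$ denotes $\Lambda^{|c|}$ if $c > 0$ and $\overline{\Lambda}^{|c|}$ if $c < 0$. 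A short direct computation gives that the transform of $\Hyp(\,\cdot\,; \bm{\rho}[c], \varnothing; \psi, F)$ equals $|F|^{1/2} \prod_{\rho \in \bm{\rho}[c]} \varepsilon(\psi, \Lambda\rho)$, and that of $\Hyp(\,\cdot\,; \varnothing, \bm{\rho}[c]; \psi, F)$ equals the same quantity with $\varepsilon(\psi, \overline{\Lambda\rho})$ in place of $\varepsilon(\psi, \Lambda\rho)$, up to a twist by $\Lambda(-1)^{|c|} \prod_{\rho \in \bm{\rho}[c]} \rho(-1)$. One then invokes Lemma~\ref{lem:Hasse-davenport} with $N = |c|$, taking the character there to be $\Lambda$ (respectively $\overline{\Lambda}$); using that $\bm{\rho}[c] = \bm{\rho}[|c|]$ is a group --- hence stable under $\rho \mapsto \overline{\rho}$, which is exactly what turns the ``denominator'' product into $\prod_{\rho} \varepsilon(\psi, \overline{\Lambda\rho})$ --- one checks that $\widehat{B_c}(\Lambda)$ and the relevant hypergeometric transform coincide up to a factor independent of $\Lambda$. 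By injectivity of $f \mapsto \widehat{f}$ this lifts to the asserted identity of functions on $F^{\times}$; convolving $B_a, B_b$ and the $k$ copies of $\psi$ and concatenating the multisets then produces each of the four cases of the lemma.

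The only genuinely delicate point is the bookkeeping of the unit constant. The $\Lambda$-dependent prefactors --- $\overline{\Lambda^c}(c)$ on the $B_c$ side and the twist $\Lambda(-1)^{|c|}$ on the hypergeometric side --- cancel, leaving a $\Lambda$-independent unit built from the root numbers $\varepsilon_{N}(\psi)$ of Lemma~\ref{lem:Hasse-davenport}, the sign $\prod_{\rho \in \bm{\rho}[c]} \rho(-1)$ (equal to $(-1)^{(|F|-1)/2}$ when $|c|$ is even and to $1$ otherwise), and a power of $-1$; this unit has to be matched with the normalization written $\varepsilon_a(\psi)\varepsilon_b(\psi)$ in the statement, which is where the precise convention for $\varepsilon_N(\psi)$ from \cite{Katz88}, \S\,5.6 is used. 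Since only $|\varepsilon_a(\psi)\varepsilon_b(\psi)| = 1$ enters the later arguments, this final verification --- although needed for the clean form of the statement --- carries no essential content. (One could instead run the whole argument at the level of $\ell$-adic sheaves, via Katz's sheaf-theoretic Hasse--Davenport and the decomposition of $[c]_{*}\mathcal{L}_{\psi}$ under the $c$-th power map; the trace-function computation above is, however, self-contained given Lemma~\ref{lem:Hasse-davenport}.)
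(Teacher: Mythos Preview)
Your proposal is correct and follows essentially the same route as the paper's proof: both reduce via the convolution identity $\K_k^{a,b}(a^ab^bu;\psi,F)=(B_a\ast B_b\ast\psi^{\ast k})(u)$ to the ``building block'' identities $-B_c=\varepsilon_c(\psi)\operatorname{Hyp}(\,\cdot\,;\bm{\rho}[c],\varnothing;\psi,F)$ for $c>0$ (and its dual for $c<0$), and prove these by passing to multiplicative characters and applying the Hasse--Davenport relation. The only cosmetic difference is that the paper computes the inverse Mellin transform of $\chi\mapsto-\varepsilon(\psi_a,\chi^a)$ in two ways, whereas you run the argument in the opposite direction by transforming $B_c$; either way the minus signs from Lemma~\ref{lem:Hasse-davenport} appear in a pair and cancel, leaving exactly $\varepsilon_a(\psi)\varepsilon_b(\psi)$.
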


\begin{proof}
  We orient ourselves at \cite{FKMSthirdmom}. Suppose first $a>0$, we compute the discrete Mellin transform of the following function in two ways 
\begin{equation}\label{eq:Hasse-davenport2}
    \widehat{F^{\times}}\rightarrow \CC, \chi \mapsto -\varepsilon(\psi_a,\chi^a).
\end{equation}
By the definition of discrete Mellin transform we have first 
\begin{align}\label{eq:Hasse-davenport3}
    F^{\times}\ni u\mapsto&\ -\frac{1}{\vert{F^{\times}}\vert^{\frac{1}{2}}}\sum_{\chi \in \widehat{F^{\times}}}\varepsilon(\psi_a,\chi^a)\overline{\chi}(u) \nonumber \\ &=-\frac{1}{\vert{F^{\times}}\vert^{\frac{1}{2}}\vert{F}\vert^{\frac{1}{2}}}\sum_{x\in F^{\times}}\psi(ax)\sum_{\chi \in \widehat{F^{\times}}}\chi(x^a)\overline{\chi}(u) = -\frac{\vert{F^{\times}}\vert^{\frac{1}{2}}}{\vert{F}\vert^{\frac{1}{2}}}\sum_{x\colon x^a=u}\psi(ax).
\end{align}
On the other hand, by Lemma \ref{lem:Hasse-davenport} we have
\begin{align}\label{eq:Hasse-davenport4}
F^{\times}\ni u\mapsto & \frac{1}{\vert{F^{\times}}\vert^{\frac{1}{2}}}\sum_{\chi\in \widehat{F^{\times}}}\varepsilon_a(\psi)\prod_{\substack{\rho \in \widehat{F^{\times}}\\\rho^a=1}}\varepsilon(\psi,\chi\rho)\overline{\chi}(u) \nonumber \\ &= \frac{\varepsilon_a(\psi)}{\vert{F^{\times}}\vert^{\frac{1}{2}}\vert{F}\vert^{\frac{a}{2}}}\sum_{x_1,\dots,x_a}\psi(x_1+\dots+x_a)\rho_1(x_1)\cdots\rho_a(x_a)\sum_{\chi\in \widehat{F^{\times}}}\chi(x_1\cdots x_a)\overline{\chi}(u)\nonumber \\ &=\frac{\vert{F^{\times}}\vert^{\frac{1}{2}}\varepsilon_a(\psi)}{\vert{F}\vert^{\frac{1}{2}}}\Kl(u;\rho[a];\psi,F).
\end{align}
We deduce for $a>0$ that 
\begin{equation}\label{eq: K Kl connection}
    -\sum_{x:x^a=u}\psi(ax) =\varepsilon_a(\psi)\Kl(u;\rho[a];\psi,F).
\end{equation}
Similarly, we compute for $a<0$  
\begin{equation*}
    -\sum_{x:x^{a} =u}\psi(ax) 
    = \varepsilon_a(\psi)\operatorname{Hyp}(u,\varnothing,\rho[a];\psi,F) = \varepsilon_a(\psi)\Kl(u^{-1},\rho[a];\overline{\psi},F).
\end{equation*}
Recall that, 
\begin{align*}
\left(\sum_{x^{a}=u_1}\psi( ax) \ast \sum_{x^{b}=u_2}\psi(bx) \ast \psi \ast \dots \ast \psi\right)(u) 
=\K_k^{a,b}(a^{a}b^{b}u;\psi,F)
\end{align*}
and similarly
\begin{align*}
    &\left(\varepsilon_a(\psi)\Kl(\bullet^{\operatorname{sgn}(a) };\rho[a];\psi^{\operatorname{sgn}(a)},F)*\varepsilon_b(\psi)\Kl(\bullet^{\operatorname{sgn}(b)};\rho[b];\psi^{\operatorname{sgn}(b)},F) * \psi* \dots *\psi\right)(u) =\\ 
    \\ &\begin{cases}
        \varepsilon_a(\psi)\varepsilon_b(\psi)\operatorname{Hyp}(u;\bm{\rho} [a,b]\sqcup \bm{\rho} [1]^k,\varnothing;\psi,F)&  a,b>0, \\
        \varepsilon_a(\psi)\varepsilon_b(\psi)\operatorname{Hyp}(u;\bm{\rho}[a]\sqcup \bm{\rho}[1]^k,\rho[b];\psi,F)& a>0>b, \\
        \varepsilon_a(\psi)\varepsilon_b(\psi)\operatorname{Hyp}(u;\bm{\rho}[b]\sqcup \bm{\rho}[1]^k,\bm{\rho}[a];\psi,F)& b>0>a,\ \\\varepsilon_a(\psi)\varepsilon_b(\psi)
        \operatorname{Hyp}(u;\bm{\rho}[1]^k,\bm{\rho}[a]\sqcup \bm{\rho}[b];\psi,F)&0>a,b.
    \end{cases}
\end{align*}
\end{proof}

For any positive integer $n$ let $F_n/\FF_q$ denote the finite extension of degree $n$. We denote by $\psi_n$ the standard additive character of $F_n$, that is $\psi_n( u) = e_q(\operatorname{tr}_{F_n/\FF_q}( u))$ and denote $\K_k^{a,b}( u;\psi_n,F_n) = \K_k^{a,b}( u;F_n)$. Similarly we denote $\operatorname{Hyp}(u,\bm{\chi},\bm{\theta};\psi_n,F_n)= \operatorname{Hyp}(u,\bm{\chi},\bm{\theta};F_n)$, here is implicitly understood that $\bm{\chi}$ and $\bm{\theta}$ are family of characters in $F_n^{\times}$. We denote the attached complexes of sheaves lisse over $\mathbb{G}_{m/\FF_q}$ by $\mathcal{K}_k^{a,b}$ and $\mathcal{H}(\bm{\chi},\bm{\theta},\psi)$, respectively.

\begin{cor}\label{cor: K_k is hypergeometric}Let $u\neq 0.$ For every finite extension $F_n/\FF_q$ in which $X^a-1$, $X^b-1$ split we have the equality of trace functions   \begin{equation*}
    K_k^{a,b}(a^ab^bu;F_n) = \begin{cases}
        \varepsilon_a(\psi_n)\varepsilon_b(\psi_n)\operatorname{Hyp}(u;\bm{\rho} [a,b]\sqcup \bm{\rho} [1]^k,\varnothing;F_n)&  a,b>0, \\
        \varepsilon_a(\psi_n)\varepsilon_b(\psi_n)\operatorname{Hyp}(u;\bm{\rho}[a]\sqcup \bm{\rho}[1]^k,\bm{\rho}[b];F_n)& a>0>b, \\
        \varepsilon_a(\psi_n)\varepsilon_b(\psi_n)\operatorname{Hyp}(u;\bm{\rho}[b]\sqcup \bm{\rho}[1]^k,\bm{\rho}[a];F_n)& b>0>a,\ \\\varepsilon_a(\psi_n)\varepsilon_b(\psi_n)
        \operatorname{Hyp}(u;\bm{\rho}[1]^k,\bm{\rho}[a]\sqcup \bm{\rho}[b];F_n)&0>a,b,
    \end{cases}
\end{equation*}
which implies that $\mathcal{K}_k^{a,b}$ is geometrically isomorphic (in particular, they have same geometric monodromy group and local monodromies) to 
\begin{equation*}
    [\times (a^ab^b)^{-1}]^*\begin{cases}
        \mathcal{H}(\bm{\rho} [a,b]\sqcup \bm{\rho} [1]^k,\varnothing)&  a,b>0, \\
      \mathcal{H}(\bm{\rho}[a]\sqcup \bm{\rho}[1]^k,\bm{\rho}[b])& a>0>b, \\
       \mathcal{H}(\bm{\rho}[b]\sqcup \bm{\rho}[1]^k,\bm{\rho}[a])& b>0>a,\ \\
        \mathcal{H}(\bm{\rho}[1]^k,\bm{\rho}[a]\sqcup \bm{\rho}[b])&0>a,b.
    \end{cases} 
\end{equation*}
\end{cor}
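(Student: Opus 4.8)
The plan is to deduce Corollary \ref{cor: K_k is hypergeometric} from Lemma \ref{l: hasse davenport} together with the general dictionary between trace functions and sheaves. First I would observe that Lemma \ref{l: hasse davenport} already gives the claimed identity of trace functions over any finite field $F$ with $a,b \mid |F^\times|$ and a chosen non-trivial additive character $\psi$; the point of the corollary is to upgrade this to (i) the canonical extensions to the tower $F_n/\FF_q$ of the additive character $\psi_n$ and the character families $\bm\rho[\cdot]$, and (ii) a geometric statement about the sheaves $\ca K_k^{a,b}$ and $\ca H(\cdot,\cdot)$ on $\mathbb G_{m/\FF_q}$. So I would first note that for $n$ such that $X^a-1$ and $X^b-1$ split in $F_n$ — equivalently $a, b \mid |F_n^\times|$ — the condition of Lemma \ref{l: hasse davenport} is met, the multiset $\bm\rho[a]$ (resp.\ $\bm\rho[b]$, $\bm\rho[1]$) of characters of $F_n^\times$ is exactly the full set of characters killed by $a$ (resp.\ $b$, trivial), and $\psi_n$ is a non-trivial additive character of $F_n$; thus the trace function identity of the corollary is literally a special case of Lemma \ref{l: hasse davenport} with $F = F_n$, $\psi = \psi_n$. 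I would remark that such $n$ form a cofinal set (any common multiple of $\ord_q(a)$-type quantities works), so there are infinitely many of them.

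Next I would pass from trace functions to sheaves. Both sides of the asserted identity are, by construction, trace functions of complexes of $\ell$-adic sheaves that are lisse on $\mathbb G_{m/\FF_q}$: the left side is the complex $\ca K_k^{a,b}$ built in Section \ref{S: Hasse- Davenport} via iterated multiplicative convolution (using \cite{Katz2012Convolution}), and the right side is the pullback $[\times(a^ab^b)^{-1}]^*\ca H(\cdots)$ of a hypergeometric sheaf (times the scalars $\varepsilon_a(\psi_n)\varepsilon_b(\psi_n)$, which are constants of absolute value $1$ and contribute only a Tate twist / rank-one geometrically trivial factor that is geometrically irrelevant). The multiplicative substitution $u \mapsto (a^ab^b)^{-1}u$ is an automorphism of $\mathbb G_{m/\FF_q}$, so it only relabels the base point and does not affect the geometric monodromy group or the collection of local monodromy representations at $0$, $\infty$ (it merely permutes/identifies punctures appropriately, and here $a^ab^b \in \FF_q^\times$ so the only punctures $0,\infty$ are fixed). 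Since the two complexes have equal trace functions over $F_n$ for infinitely many $n$ in the tower, by Chebotarev / the fact that a lisse sheaf on a curve over $\FF_q$ is determined up to geometric isomorphism by its trace functions over sufficiently many extensions (equivalently, the argument of \cite{Katzbook}-type uniqueness for convolution, or directly: equality of Frobenius traces on a dense set of closed points of every $\mathbb G_{m/F_n}$ forces a geometric isomorphism of the semisimplifications, and here both are pure/mixed of weight $\le 0$ honest sheaves), I conclude that $\ca K_k^{a,b}$ is geometrically isomorphic to the indicated pullback of a hypergeometric sheaf. In particular they share the same geometric monodromy group and the same local monodromy representations at $0$ and $\infty$, which is all that will be needed in Sections \ref{s: weakly gkr}.

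The main obstacle, such as it is, is bookkeeping rather than depth: one must be careful that the character families $\bm\rho[a]$ are taken over the correct field $F_n$ (they genuinely depend on $n$, but the resulting hypergeometric \emph{sheaf} $\ca H(\bm\rho[a],\dots)$ is defined over $\FF_q$ and its formation commutes with the base change $F_n/\FF_q$, by the standard descent properties of hypergeometric sheaves in \cite{Katzbook}, Chapter 8), and that the normalizing Gauss-sum factors $\varepsilon_a(\psi_n),\varepsilon_b(\psi_n)$ are accounted for — they match on both sides of Lemma \ref{l: hasse davenport} so they simply cancel when comparing trace functions, and geometrically they are invisible. A secondary point to state cleanly is why equality of trace functions on one cofinal tower $\{F_n\}$ suffices for a \emph{geometric} isomorphism: this is because both complexes descend to $\FF_q$ and are lisse on $\mathbb G_m$, so comparing traces on closed points of $\mathbb G_{m/F_n}$ for all $n$ in a cofinal set pins down the associated representations of $\pi_1^{\mathrm{geom}}$ up to isomorphism (after semisimplification), and in our situation the complexes in play are (shifts of) geometrically semisimple perverse sheaves by the theory of convolution and of hypergeometric sheaves. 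Thus I would present the proof as: (1) verify the hypotheses of Lemma \ref{l: hasse davenport} hold with $F=F_n$, $\psi=\psi_n$ for the relevant $n$, giving the trace-function identity; (2) invoke the trace-function$\leftrightarrow$sheaf dictionary plus the irrelevance of the Gauss-sum scalars and of the multiplicative translation $[\times(a^ab^b)^{-1}]$ to upgrade to a geometric isomorphism; (3) record the consequences for geometric monodromy group and local monodromies.
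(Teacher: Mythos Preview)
Your proof is correct and matches the paper's (implicit) approach: the paper gives no separate argument for this corollary, treating it as immediate from Lemma~\ref{l: hasse davenport} applied with $F=F_n$, $\psi=\psi_n$ together with the standard trace-function/sheaf dictionary, which is precisely what you spell out. Your handling of the bookkeeping---cofinality of the admissible $n$, geometric irrelevance of the scalars $\varepsilon_a(\psi_n)\varepsilon_b(\psi_n)$ and of the multiplicative translation $[\times(a^ab^b)^{-1}]$, and geometric semisimplicity (via Katz's convolution theory and the purity of hypergeometrics) to upgrade trace-equality to an honest geometric isomorphism---is accurate.
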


Now that we have identified the sheaves $\mathcal{K}_k^{a,b}$ with the (pullback of) hypergeometric sheaves, we introduce a practical tool to simplify the hypergeometric sheaves $\mathcal{H}(\bm{\chi},\bm{\theta},\psi)$, valid over any finite field $F$, which is used in Section \ref{s: weakly gkr}.

\begin{lem}[\cite{Katzbook}, Thm. 8.4.10]
    Suppose $\bm{\chi}$ and $\bm{\theta}$ are distinct, then the semisimplification of $\mathcal{H}(\bm{\chi},\bm{\theta},\psi)$ is isomorphic to 
    \[\mathcal{H}(\bm{\chi}\smallsetminus\bm{\chi}\sqcap\bm{\theta},\bm{\theta}\smallsetminus\bm{\chi}\sqcap\bm{\theta},\psi) \oplus \bigoplus_{\rho\in \bm{\chi}\sqcap\bm{\theta}}G_{\rho}^{\otimes \operatorname{deg}}.\mathcal{L}_{\rho},\]
    where $G_{\rho}^{\otimes \operatorname{deg}}$ denote the geometrically constant sheaf associated to some algebraic number $G_{\rho}$ satisfying $\vert{G_{\rho}}\vert \leq \vert{k}\vert^{-1/2}$.
\end{lem}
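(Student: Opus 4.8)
The plan is straightforward: this is \cite{Katzbook}, Thm.~8.4.10, so in the paper we would simply invoke it. Still, let me indicate why it holds and where the actual work lies. The statement is a \emph{de-primitivization} formula for hypergeometric sheaves, and it comes out of Katz's formalism of multiplicative (middle) convolution, by which these sheaves are constructed in the first place.

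First I would isolate the base case. If $\rho$ occurs in both $\bm{\chi}$ and $\bm{\theta}$, write $\bm{\chi}=\bm{\chi}'\sqcup\{\rho\}$ and $\bm{\theta}=\bm{\theta}'\sqcup\{\rho\}$. At the level of trace functions one checks directly that the rank-one piece $\operatorname{Hyp}(u;\{\rho\},\{\rho\};\psi,F)$ collapses: the substitution $x=uy$ turns it into $|F|^{-1/2}\rho(u)\sum_{y\in F^{\times}}\psi((u-1)y)$, which equals $-|F|^{-1/2}\rho(u)$ for $u\neq 1$ and $|F|^{1/2}-|F|^{-1/2}$ for $u=1$. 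Thus the associated object on $\mathbb{G}_{m}$ is, generically, a geometrically constant multiple of the Kummer sheaf $\mathcal{L}_{\rho}$, the constant being a normalized Gauss-sum factor whose size is controlled by Weil's bound, together with a degenerate contribution concentrated at $u=1$ (a skyscraper, which is the unit for middle convolution). Feeding this through the convolution identity $\mathcal{H}(\bm{\chi},\bm{\theta},\psi)\cong\mathcal{H}(\bm{\chi}',\bm{\theta}',\psi)\,*_{\mathrm{mid}}\,\mathcal{H}(\{\rho\},\{\rho\},\psi)$ (valid up to geometric isomorphism) and using exactness of middle convolution, the semisimplification picks up exactly one geometrically constant summand $G_{\rho}^{\otimes\deg}\otimes\mathcal{L}_{\rho}$ with $|G_{\rho}|\leq|k|^{-1/2}$, while what survives is $\mathcal{H}(\bm{\chi}',\bm{\theta}',\psi)$, whose character multisets may still overlap. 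Inducting on the cardinality of $\bm{\chi}\sqcap\bm{\theta}$ peels off one common $\rho$ at a time and terminates at the genuinely hypergeometric sheaf $\mathcal{H}(\bm{\chi}\smallsetminus\bm{\chi}\sqcap\bm{\theta},\,\bm{\theta}\smallsetminus\bm{\chi}\sqcap\bm{\theta},\,\psi)$, whose two multisets are now disjoint; this is the claimed decomposition.

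The hard part is not the trace-function bookkeeping but its upgrade to an honest isomorphism of \emph{semisimplified} $\ell$-adic complexes: this needs Katz's irreducibility and classification results for primitive hypergeometric sheaves (disjointness of numerator and denominator forcing geometric irreducibility) together with the structural properties of middle convolution, and, secondarily, the precise identification of the constants $G_{\rho}$ as products of normalized Gauss sums and the size bound via the Weil bound and the weight normalization built into $\operatorname{Hyp}$. Since all of this is carried out in \cite{Katzbook}, Chapter 8, in the paper I would not reproduce it, and would content myself with the trace-function sanity check above and the citation.
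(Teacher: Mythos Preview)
The paper does not provide a proof of this lemma; it is stated with only the citation to \cite{Katzbook}, Thm.~8.4.10. Your proposal correctly recognizes this and matches the paper's approach (defer to Katz), while additionally offering a helpful sketch of the convolution-and-induction mechanism behind the result; the sketch is sound, though the constant $G_\rho$ arising here is essentially $-\vert F\vert^{-1/2}$ rather than a genuine Gauss sum.
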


The upshot of the Lemma is that the pure component of weight $0$ of $\mathcal{H}(\bm{\chi},\bm{\theta};\psi)$ is isomorphic to the pure component of weight $0$ of $\mathcal{H}(\bm{\chi}\smallsetminus\bm{\chi}\sqcap\bm{\theta},\bm{\theta}\smallsetminus\bm{\chi}\sqcap\bm{\theta};\psi)$ so that we can reduce our attention to those $\mathcal{H}(\bm{\chi},\bm{\theta})$ where $\bm{\chi}$ and $\bm{\theta}$ are disjoint. 

We delete the intersection of the two character sets in the hypergeometric sheaf from Corollary \ref{cor: K_k is hypergeometric} and associate the resulting hypergeometric sheaf to $\K_k^{a,b}$.

\section{Dyadic partition and P\'olya-Vinogradov method} \label{s: dyadic partition and poly vinogradov}
By Proposition \ref{prop: application AFE} we want to study the sums $M_{a,b,\Kl_k}$ and $M_{a,b,\K_k^{a,b}}$. In this section, we reduce the range of summation to dyadic intervals in the P\'olya-Vinogradov range $m\sim M$ and $n\sim N$ for $q^{1/2-\delta}\leq M,N\leq q^{1/2+\delta}$. We then show that it is sufficient to obtain type I bounds to conclude the general case in Theorem \ref{thm:introduction1}. Furthermore, we introduce a method to change the trace function that will be useful to treat some non-gallant cases in Section \ref{s: weakly gkr}.

Let $\delta \in (0,\frac{1}{100})$ denote a small number. Because of Remark \ref{rem: the FE and its consequences}, in this section for $k=1$ we exclude the cases $a$ or $b=-1$.
\begin{notation}\label{notation K}    
Let $u\in \FF_q^{\times}$. In this section, $K$ denotes either of the following trace functions: \begin{itemize}
     \item $K(u) = \Kl_k(u)$ or $K(u) = \K_k^{a,b}(u^{-1})$ if $k>1,$
     \item $K(u) = \K_1^{a,b}(u^{-1})$ if $-1\notin \{a,b\}$ 
     \item $K(u)=e_q(u)$ if $-1\notin \{a,b\}$ and $\{a,b\} \neq \{-2\}.$ 
 \end{itemize} 
 In each of the above cases $K$ is the trace function of a middle extension complex $\mathscr{K}$ of $\ell$-adic sheaves mixed of weight $0$, lisse on $\mathbb{G}_m$. We set  $K(0) = \operatorname{tr}(\operatorname{Frob}_0|V^{I_{(0)}}_{\mathscr{K}})$. We have $\Vert K\Vert_\infty \ll 1$, independently of $q$, but possibly depending on $a,b$ and $k$. 

\end{notation}

The goal of this section is to prove that there exists $\eta>0$ so that for $K$ as in Notation \ref{notation K} and for $K = e_q$, when $a=b=-2$, we have for any integer $\xi$ coprime to $q$ 
\begin{equation}\label{eq:goal of section 4}
    M_{a,b,K}(\xi)  
 = \frac{1}{q^{1/2}}\sum_{m,n}\frac{1}{(mn)^{1/2}}V(mn)K(\bar \xi m^{-a}n^{-b})\ll q^{-\eta},\end{equation}
 where $V\colon (0,\infty) \to \CC$ is any function satisfying \eqref{eq: decaying V 1} and \eqref{eq: decaying V 2}.
The case $K=e_q$ and $a=b=-2$ is treated at the end of the section.

 Applying a smooth dyadic partition of unity and by the rapid decay of $V$ we have
\begin{align*}
&M_{a,b,K}(\xi) \\&\ll \frac{(\log q)^2}{(qMN)^{1/2}} \sup_{\substack{M,N\\MN\leq q^{1+2\delta}}}\sum_{\substack{m,n\\(mn,q)=1}}K(\overline{\xi} m^{-a}n^{-b})U\left(\frac{m}{M}\right)U\left(\frac{n}{N}\right)V\left(\frac{mn}{q}\right) + O_{\delta}(q^{-100}),\end{align*}
where $U\in C_c^{\infty}(\RR_{>0})$ satisfies $\operatorname{supp}(U) \subset (1/2,2)$ and $\vert{x^jU^{(j)}(x)}\vert \ll_j 1$ for every $x\in \RR$. 
For $M,N\geq 1$ and $U$ as above, we denote \begin{equation}\label{eq: defi S_abK}
S_{a,b,K}(M,N)(\xi)=\sum_{\substack{m\\(m,q)=1}}\sum_{\substack{n\\(n,q)=1}}K(\bar \xi m^{-a}n^{-b})U\left(\frac{m}{M}\right)U\left(\frac{n}{N}\right)V\left(\frac{mn}{q}\right).\end{equation}
We do not display $U$ in the notation and throughout we might exchange $U$ with some $W$ that has the same properties $\operatorname {supp}(W) \subset (1/2,2)$ and $\vert x^jW^j(x) \vert \ll_j 1$ for any non-negative integer $j$.

\begin{lem}\label{lem: fourier}
    For a function $T$ on $\FF_q$ consider the naive Fourier transform 
    \[ \widehat{T}(u) = \frac{1}{q^{1/2}}\sum_{x\mods{q}}T(x) e_q(xu), \qquad u \in \FF_q.\]
    Let $K$ be as in Notation \ref{notation K}. 
    Let $K_{a,\xi}(u) = K(\xi u^{-a})$ for $\xi$ be an integer coprime to $q$. Then 
    \[ \Vert\widehat{K_{a,\xi}}\Vert_\infty  \ll 1, \]
    where the implied constant depends only on $a$ and $k$, but not on $q$ and $\xi$.
\end{lem}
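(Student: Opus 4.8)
The plan is to unfold the definition of the naive Fourier transform and exploit that $K$ is itself, in all cases of Notation \ref{notation K}, a (sum of) hypergeometric trace functions, so that $\widehat{K_{a,\xi}}$ is again a trace function of a complex of $\ell$-adic sheaves that is mixed of weight $\leq 0$, whence its trace function is bounded by the rank (times the number of Frobenius eigenvalues counted with multiplicity), which is an explicit function of $a$ and $k$ alone. Concretely, write
\[
\widehat{K_{a,\xi}}(u) = \frac{1}{q^{1/2}}\sum_{x \mods q} K(\xi x^{-a}) e_q(xu).
\]
The sum over $x$ splits off the point $x=0$ (contributing $K(0) \ll 1$ by the last sentence of Notation \ref{notation K}), and on $x \ne 0$ the substitution $x \mapsto x$, together with Corollary \ref{cor: K_k is hypergeometric} (in the $\K_k^{a,b}$ cases) or the standard description of the Kloosterman sheaf (in the $\Kl_k$ and $e_q$ cases), identifies the inner sum as a Fourier transform (a multiplicative-additive convolution evaluated at a point) of a hypergeometric sheaf pulled back by the $a$-th power map $[\times a]$ or $[x \mapsto x^{-a}]$.

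First I would record that the $a$-th power pullback $[a]^* \mathcal{H}$ of a hypergeometric sheaf $\mathcal{H}(\bm\chi,\bm\theta)$ has rank and number of singularities bounded linearly in $|a|$ and in $|\bm\chi| + |\bm\theta|$ — the latter being $\leq |a| + |b| + |k|$ by Corollary \ref{cor: K_k is hypergeometric} — and that these bounds are uniform in $q$ once $q$ is large enough that $X^a - 1, X^b - 1$ split (or more precisely, after base change; the trace function bound is insensitive to this since we only need an upper bound valid for all $q$). Then I would apply the Fourier transform functor of Laumon/Katz: the Fourier transform $\mathrm{FT}_\psi(\mathcal{G})$ of a Fourier sheaf $\mathcal{G}$ on $\mathbb{A}^1$ that is mixed of weight $\leq w$ is again mixed of weight $\leq w+1$, and its generic rank and singularities are controlled by those of $\mathcal{G}$ (via the Euler–Poincaré formula / Laumon's stationary phase); after the twist $e_q(xu)$, the normalization $q^{-1/2}$ exactly compensates the weight shift, so the resulting trace function is bounded in absolute value by a constant depending only on the complexity of $\mathcal{G}$, hence only on $a, b, k$. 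Since $\xi$ only enters via the harmless scaling $x^{-a} \mapsto \xi x^{-a}$, i.e. a pullback by an automorphism $[\times \xi^{1/a}]$ of $\mathbb{G}_m$ after passing to an extension (or, more elementarily, by absorbing $\xi$ into $u$ after a change of variables when $(a,q)=1$), the bound is uniform in $\xi$.

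The cleanest route avoids even invoking the full Fourier transform machinery: since $(a,q) = 1$, I can substitute $x \mapsto \xi^{\bar a} y^{\bar a}$-type changes of variable, or more simply note that $K(\xi x^{-a}) e_q(xu)$ summed over $x$ is, up to the $x=0$ term, a complete exponential sum to which one applies the Weil bound through the known $\ell$-adic description already assembled in Section \ref{S: Hasse- Davenport}: the sheaf whose trace function is $x \mapsto K(\xi x^{-a})$ is lisse of bounded rank on $\mathbb{G}_m$, tamely or wildly ramified at $0$ and $\infty$ with bounded Swan conductors (bounded in terms of $a, b, k$), so its Fourier transform has bounded generic rank by Laumon, and its weight-$(\leq 1)$-ness gives, after dividing by $q^{1/2}$, the desired $O_{a,k}(1)$ bound pointwise in $u$. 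The main obstacle is purely bookkeeping: one must check that in each of the three bullet cases of Notation \ref{notation K} the relevant sheaf is genuinely a \emph{Fourier sheaf} (no $\overline{\mathbb{Q}_\ell}$ or Artin–Schreier summand that Fourier-transforms to a punctual sheaf, which would still be harmless but must be accounted for) and that the complexity bound is uniform in $q$; this is exactly the kind of estimate packaged in \cite{FKMSbilinear} and \cite{AppliedladicCohom}, and I would cite the conductor-of-Fourier-transform bound there rather than reprove it. No serious analytic difficulty arises; the content is that $\|K\|_\infty \ll 1$ propagates through one Fourier transform with the $q^{1/2}$ normalization by weight considerations.
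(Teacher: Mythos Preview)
Your approach is the same as the paper's: invoke Laumon's $\ell$-adic Fourier transform after verifying that the sheaf underlying $K_{a,\xi}$ is Fourier, and use the weight bound. Where you differ is in treating the Fourier-sheaf verification as mere ``bookkeeping''; the paper does this case-by-case, and three configurations need genuine attention. For $(a,b,k)=(-1,-1,2)$ one has $\K_2^{-1,-1}(u)=q^{1/2}\delta_{u\equiv 1}+O(q^{-1/2})$, so the underlying complex is \emph{not} mixed of weight $\leq 0$ as your general argument assumes --- the punctual piece has weight $1$ and must be Fourier-transformed by hand (giving a bounded additive character); note this is the reverse of your hedge about Artin--Schreier summands transforming to punctual sheaves. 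For $(a,b,k)=(a,-a,1)$ with $a>1$ and for $(-1,-1,3)$, the weight-$0$ component of $\K_k^{a,b}$ is itself an Artin--Schreier sheaf, and one must check that after the inversion $K(u)=\K_k^{a,b}(u^{-1})$ and the pullback $u\mapsto \xi u^{-a}$ the result is still Fourier; this holds precisely because of the exclusions built into Notation~\ref{notation K}. These checks are short but are the actual content of the lemma; your outline is otherwise correct.
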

\begin{proof}
    If $K(u)=\Kl_k(u)$ or $K(u) = e_q(u)$, then $K_{a,\xi}(u) = \Kl_k(\xi u^{-a})$ and $K_{a,\xi}(u) = e_q(u^{-a})$ (recall, in this case $-1 \neq a$) and in both cases the underlying sheaves are pure of weight $0$ and Fourier (\cite{Katz88}), so we use Laumon's theorem (see \cite{AppliedladicCohom}, Theorem 6.6, Proposition 6.7).
    
      Suppose now $K(u) = \K_k^{a,b}(u^{-1})$. If $a= -1 = b$ and $k=2$, then we see $\K_2^{-1,-1}(u) = q^{1/2}\delta_{u\equiv 1\mods{q}} + \K'(u)$, where $\Vert K' \Vert \ll q^{-1/2}$. The Fourier transform of $\K'$ is bounded trivially and the Fourier transform of $q^{1/2}\delta_{u\equiv 1\mods{q}}$ is just an additive character. If $a$ or $b$ is not $-1$ or $k\neq 2$, by Corollary \ref{cor: K_k is hypergeometric}, we can write $\K_k^{a,b}(u)  = \sum_{w \leq 0}(\K^{a,b}_k)_w(u)$, where $(\K^{a,b}_k)_w$ is a trace function, pure of weight $w$. For $w<0$ we bound the naive Fourier transform trivially.  The weight 0 component $(\K^{a,b}_k)_0$ is a hypergeometric function. For all allowed configurations, except $(a,-a,1)$ and $(-1,-1,3)$,  of $(a,b,k)$ one has by \cite{Katzbook}, Theorem 8.4.2 that the complex of sheaves underlying $(\K^{a,b}_k)_0(u)$ is Fourier and $(\K^{a,b}_k)_0(u) \neq e_q(u^{-1})$. Hence $K_{a,\xi}(u) = \K_k^{a,b}(\xi^{-1} u^{a})$ is again Fourier and we conclude as before. 
    For the triples $(a,-a,1)$ (in this case $a\neq 1$) and $(-1,-1,3)$, we have that  $$(\K_1^{a,-a})_0(u)=e_q(a^{2a}(-1)^au),\qquad \K_{3}^{-1,-1}(u)=e_q(u)$$ that give, respectively,
    $$ K_{a,\xi}(u) = e_q(\xi a^{-2a}(-1)^au^{-a}), \qquad K_{-1,\xi}(u) = K(\xi u)= e_q(\xi u^{-1}), $$
    whose underlying sheaves are Fourier.

\end{proof}

\begin{prop}\label{prop main under condition}Let $K$ be as in Notation \ref{notation K}. 
Assume that there exists $\eta >0$ (independent of $q$ and $k$) so that for all positive integers $M,N>0$ with $q^{\frac 12-2\delta}\leq M,N\leq q^{\frac 12+\delta}$ and $MN\geq q^{1-\delta}$ we have the bound
\begin{equation}\label{eq:desideratakloosterman}\frac{S_{a,b,K}(M,N)(\pm \xi)}{(qMN)^{1/2}} \ll q^{-\eta}. \end{equation}
 Then 
    $$M_{a,b,K}(\xi) \ll q^{-\min(\eta,\delta/2)}.$$
Here, the implicit constants are allowed to depend on $\eta$, $a,b$ and $k$.
\end{prop}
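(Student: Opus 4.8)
The plan is to start from the estimate already recorded in the text --- obtained via the smooth dyadic partition of unity and the rapid decay of $V$ ---
\[ M_{a,b,K}(\xi)\ \ll\ (\log q)^2\,\sup_{\substack{M,N\\ MN\le q^{1+2\delta}}}\frac{|S_{a,b,K}(M,N)(\xi)|}{(qMN)^{1/2}}\ +\ O_\delta(q^{-100}), \]
and to bound the displayed ratio for every pair $(M,N)$ with $MN\le q^{1+2\delta}$. The first move is to cut this range into four pieces, each treated differently: (i)~$MN> q^{1+\delta}$; (ii)~$MN< q^{1-\delta}$; (iii)~$q^{1-\delta}\le MN\le q^{1+\delta}$ with $\max(M,N)\le q^{1/2+\delta}$; (iv)~$q^{1-\delta}\le MN\le q^{1+\delta}$ with $\max(M,N)> q^{1/2+\delta}$.

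Ranges (i)--(iii) are immediate. In (i) the cutoffs $U(m/M)U(n/N)$ force $mn\asymp MN> q^{1+\delta}$ throughout the support, so the rapid decay of $V$ gives $V(mn/q)\ll_A(MN/q)^{-A}$, whence $|S_{a,b,K}(M,N)(\xi)|\ll MN(MN/q)^{-A}\ll q^{-100}$ for $A$ large, and the ratio is $\ll q^{-100}$. In (ii) the trivial bound $|S_{a,b,K}(M,N)(\xi)|\le\|K\|_\infty\|V\|_\infty\,MN\ll MN$ suffices, giving a ratio $\ll(MN/q)^{1/2}< q^{-\delta/2}$. In (iii) both $M$ and $N$ lie in $[q^{1/2-2\delta},q^{1/2+\delta}]$ --- the lower bound being forced by $MN\ge q^{1-\delta}$ and $M,N\le q^{1/2+\delta}$ --- so the hypothesis \eqref{eq:desideratakloosterman} applies verbatim and bounds the ratio by $\ll q^{-\eta}$.

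The one substantial case is (iv), where I expect the real work to be. Since $MN\le q^{1+\delta}$, at most one of $M,N$ can exceed $q^{1/2+\delta}$; say $N> q^{1/2+\delta}$, so that $M\le q^{1+\delta}/N< q^{1/2}$ and hence $M/N< q^{-\delta}$. Writing the inner sum as $\sum_n K_{b,\bar\xi m^{-a}}(n)\,U(n/N)V(mn/q)$, I would use $\|\widehat{K_{b,\eta}}\|_\infty\ll1$, uniformly in $\eta$ coprime to $q$ --- this is Lemma \ref{lem: fourier}, whose proof goes through verbatim with the exponent $b$ in place of $a$ --- and then complete the $n$-sum modulo $q$ (the P\'olya--Vinogradov method), estimating the smooth factor $n\mapsto U(n/N)V(mn/q)$ by partial summation since it is bounded and of bounded total variation. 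This gives $\sum_n K_{b,\eta}(n)U(n/N)V(mn/q)\ll q^{1/2}\log q$ when $N\le q$ and $\ll Nq^{-1/2}\log q$ when $N> q$. Summing trivially over the $\ll M$ admissible values of $m$ and dividing by $(qMN)^{1/2}$ bounds the ratio by $\ll(M/N)^{1/2}\log q< q^{-\delta/2}\log q$ when $N\le q$, and by $\ll(MN)^{1/2}q^{-1}\log q\le q^{-(1-\delta)/2}\log q$ when $N> q$; in either case it is $\ll q^{-\delta/2}\log q$. The case $M> q^{1/2+\delta}$ is symmetric, completing instead in $m$ and invoking Lemma \ref{lem: fourier} for the exponent $a$.

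Collecting the four ranges yields $M_{a,b,K}(\xi)\ll(\log q)^{3}\,q^{-\min(\eta,\delta/2)}$, which gives the asserted bound once the harmless logarithmic factor is absorbed (if one insists on the clean exponent, by shrinking $\delta$ slightly throughout). The crux is range (iv): one must see that the completion estimate for an \emph{unbalanced} box genuinely beats $(qMN)^{1/2}$, and this works precisely because the decay of $V$ confines us to $MN\le q^{1+\delta}$, which forces the short side below $q^{1/2}$ and so makes the aspect ratio $\min(M,N)/\max(M,N)$ as small as $q^{-\delta}$ --- exactly the saving one obtains by completing the long variable. Apart from the hypothesis, the only ingredient used is Lemma \ref{lem: fourier}.
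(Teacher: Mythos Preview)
Your proof is correct and follows essentially the same route as the paper: trivial bound for $MN$ small, the hypothesis for the balanced box, and Fourier analysis in the long variable for the unbalanced box, using Lemma~\ref{lem: fourier} to control the dual function. The paper phrases the last step as Poisson summation (slightly cleaner for smooth weights and saving your stray $\log q$), whereas you phrase it as completion plus partial summation; your extra case~(i) and the resulting tighter constraint $MN\le q^{1+\delta}$ in~(iv) actually make the arithmetic $M/N<q^{-\delta}$ cleaner than the paper's version, which works with $MN\le q^{1+2\delta}$.
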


\begin{proof}

For $M,N$ not in the range $q^{\frac 12-2\delta}\leq M,N\leq q^{\frac 12+\delta}$ we want to show a bound of the shape 

\begin{equation*} 
\frac{S_{a,b,K}(M,N)(\pm \xi)}{(qMN)^{1/2}} \ll q^{-\eta}, \end{equation*}
    for some $\eta >0$. 
The trivial bound is $S_{a,b,K}(M,N)\ll MN$. Hence, if $MN< q^{1-\delta}$ for some $\delta >0$, then \eqref{eq:desideratakloosterman} is trivially valid with $\eta = \frac{\delta}{2}$. In particular, we assume that 
\begin{equation*}\label{eq:MNnonsmall}
q^{1-\delta} \leq MN \leq q^{1+2\delta}.
\end{equation*}
Suppose that $N>q^{\frac 12+\delta}$. Denote $\tilde{U}(x) = U(x)V(mx\frac{N}{q})$. Note that by \eqref{eq: decaying V 2}, we have $x^{j}\tilde{U}^{(j)}(x) \ll_j 1$. Then, by the Poisson summation formula, 
\[\sum_{n}K(\overline{\xi} m^{-a}n^{-b})\tilde{U}\left(\frac{n}{N}\right) = \frac{N}{q^{1/2}}\sum_{n}\hat{K}_{-a,m^{-b}\overline{\xi}}(n)\widehat{\tilde{U}}\left(n\frac{N}{q}\right). \]
From Lemma \ref{lem: fourier}, we have
$\Vert{\hat{K}_{-a,m^{-b}\xi}}\Vert_{\infty} \ll 1$, for a constant only depending on the complexity of the sheaf underlying $K$. Since $\widehat{\tilde{U}}(x) \ll_A \vert{x}\vert^{-A}$ ($x\neq 0$), we have   
\[\frac{N}{q^{1/2}}\sum_{n}\hat{K}_{-a,m^{-b}\pm\xi}(n)\widehat{\tilde{U}}\left(n\frac{N}{q}\right)\ll q^{1/2+o(1)}\]
and so by trivially estimating the $m$ contribution
\[\frac{S_{a,b,K}(M,N)(\xi)}{(qMN)^{1/2}}
\ll \frac{q^{o(1)}M^{1/2}}{N^{1/2}}  < q^{-\delta+o(1)}. \]
Suppose $N<q^{\frac 12- 2\delta}$, then $M \geq \frac{q^{1-\delta}}{N}\geq q^{1/2+\delta}$ and  
with the same proof we show that 
\[\frac{S_{a,b,K}(M,N)(\xi)}{(qMN)^{1/2}}
\ll \frac{q^{o(1)}N^{1/2}}{M^{1/2}}  < q^{-\delta+o(1)}.\]

\end{proof}

By Mellin inversion formula, we write $$V(y) = \frac{1}{2\pi i}\int_{\operatorname{real}(u)=\sigma}y^{-u}\mathcal{M}(V,u) \ \mathrm{d}u $$ for any $\sigma>0$. Here, $\mathcal{M}(V,s)$ is the Mellin transform of $V$ and it satisfies for any $\sigma >0$ and any $A>0$ \begin{equation}
    \mathcal{M}(V,\sigma+it) \ll_{\sigma,A} (1+\vert t\vert )^{-A}.  
\end{equation}
This can be deduced from \eqref{eq: decaying V 1}, \eqref{eq: decaying V 2} and integration by parts. 

 We write 
\begin{align}\label{eq : mellintrick}&S_{a,b,K}(M,N)(\xi) = \nonumber\\& \frac{1}{2\pi i}\int_{(\sigma)}\mathcal{M}(V,u)\left(\frac{q}{MN}\right)^u\sum_{m,n}K(\overline{\xi}m^{-a}n^{-b})\left(\frac{MN}{mn}\right)^u U\left(\frac{m}{M}\right)U\left(\frac{n}{N}\right) \ \mathrm{d}u. \end{align}
Since we assume that $MN>q^{1-\delta}$, this step introduces an error of size $q^{2\delta \sigma}< q^{\frac{   \sigma }{50}}$, which will be controlled by choosing $\sigma$ appropriately. 

Since the coefficients $(\frac{M}{m})^u U\left(\frac{m}{M}\right)$, $(\frac{N}{n})^u U\left(\frac{n}{N}\right)$ are smooth, we want to obtain type I bounds. Let $M,N>0$ and $(\alpha_m)_m$ be a sequence of complex numbers supported in $m\sim M$ with $\alpha_m \ll 1$ and $\mathcal{N}$ be a set of $N$ consecutive integers. We define 
\begin{equation}\label{eq: def S^I}
    S^{I}_{a,b,K}((\alpha_m),\mathcal{N})(\xi) = \sum_{m}\alpha_m\sum_{n\in \mathcal{N}}K(\bar \xi m^{-a}n^{-b}).
\end{equation}

In Section \ref{s: weakly gkr}, we show that a large number of $K$ which are included in Notation \ref{notation K} are gallant as defined in \cite{FKMSbilinear}. In this case, when $K$ is gallant, we can use Theorem 1.3 in \emph{loc. cit.}, which gives the following. Let $l\geq 3$ be an integer, then for any pair of real numbers $M,N$ that satisfy \[ q^{1/l}<\frac{N}{10}, \qquad M,N,\frac{N^2}{q^{1/l}} <q \]
for any sequence $(\alpha_m)$ supported in $m\sim M$ and any set $\mathcal{N}$ of $N$ consecutive integers and for any integer $\xi$ coprime to $q$ we have 
\begin{equation}\label{eq: type 1 gallant}
    S^I_{a,b,K}( (\alpha_m),\mathcal{N})(\xi) \ll q^{o(1)}\Vert (\alpha_m) \Vert_2 M^{1/2}N\left(\frac{q^{1+\frac{3}{2l}}}{MN^2} \right)^{\frac{1}{2l}}.
 \end{equation}
For $K = e_q$ we can directly use the same shifting trick used to prove \eqref{eq: type 1 gallant} to show, with $(\alpha_m),M,N,\mathcal{N}$ and $\xi$ as above but with $l\geq -b$, if $-b\geq 3$:
\begin{equation}\label{eq: type 1 eq}
    S^I_{a,b,e_q}((\alpha_m),\mathcal{N})(\xi) \ll q^{o(1)}\Vert  (\alpha_m) \Vert_2 M^{1/2}N\left(\frac{q^{1+\frac{1}{l}}}{MN^2} \right)^{\frac{1}{2l}}.
\end{equation}
  This is proved in Proposition \ref{prop:ab for e_q}.

In the next Proposition, we show that the type I bounds \eqref{eq: type 1 gallant} and \eqref{eq: type 1 eq} are sufficient to obtain the claimed power savings for the moments \[M_{a,b,K}(\xi) \ll q^{-\eta}.\] For $K$ gallant or $K=e_q$ (recall, $-1\notin \{a,b\}$ and if $a,b<0$, then at least one of the two is $\leq -3$) it is a simple application of integration by parts. For those $K$ which are not gallant, we can use the Poisson summation formula (see Lemma \ref{l:poissontrick}) to study the corresponding problem for a different trace function $K'$, which turns out to be gallant.

\begin{prop}\label{prop: bound with cut of fct}Let $K$ be as in Notation \ref{notation K} and suppose that $K$ is gallant or $K=e_q$ and, in the latter case, that $-b\geq 3$, if $a,b<0$. Then there exists $\eta >0$ (depending at most on $a$ and $b$) so that for every integer $\xi$ coprime to $q$ we have
    \[M_{a,b,K}(\xi) \ll q^{- \eta  + o(1)}. \]
\end{prop}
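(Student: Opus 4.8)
The plan is to reduce the moment $M_{a,b,K}(\xi)$ to the type I bounds \eqref{eq: type 1 gallant} and \eqref{eq: type 1 eq} via the preceding two Propositions. By Proposition \ref{prop main under condition} it suffices to establish \eqref{eq:desideratakloosterman}, i.e. a power-saving bound for $S_{a,b,K}(M,N)(\pm\xi)/(qMN)^{1/2}$ in the P\'olya--Vinogradov range $q^{1/2-2\delta}\leq M,N\leq q^{1/2+\delta}$, $MN\geq q^{1-\delta}$. By the Mellin trick \eqref{eq : mellintrick}, up to an acceptable error of size $q^{\sigma/50}$, it suffices to bound, uniformly for $u$ on the line $\Re(u)=\sigma$ with $|u|$ small and using the rapid decay of $\mathcal{M}(V,u)$ to truncate $|u|\leq q^{o(1)}$, the smooth bilinear sum $\sum_{m,n}K(\bar\xi m^{-a}n^{-b})(M/m)^u U(m/M)(N/n)^u U(n/N)$. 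The coefficients $\beta_m := (M/m)^u U(m/M)$ are smooth, bounded, supported in $m\sim M$, with $\|(\beta_m)\|_2\ll M^{1/2}q^{o(1)}$, and similarly in $n$; splitting the smooth weight in $n$ into short sums over intervals of $N$ consecutive integers (via a further application of smoothness / partial summation, or by opening $U(n/N)(N/n)^u$ as a short Fourier-type integral) puts the inner sum in the form $S^I_{a,b,K}((\alpha_m),\mathcal{N})(\pm\xi)$ with $\mathcal{N}$ a set of $N$ consecutive integers, at the cost of a further $q^{o(1)}$.

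Now I would insert the type I input. For $K$ gallant, apply \eqref{eq: type 1 gallant}: with $\|(\alpha_m)\|_2\ll M^{1/2}q^{o(1)}$ this gives $S^I_{a,b,K}\ll q^{o(1)} M N (q^{1+3/(2l)}/(MN^2))^{1/(2l)}$, so after dividing by $(qMN)^{1/2}$ and inserting $M,N\asymp q^{1/2}$ (up to $q^{O(\delta)}$) the main factor $q^{1+3/(2l)}/(MN^2)$ is $\ll q^{-1/2+O(\delta)+3/(2l)}$, which for $l$ large enough and $\delta$ small enough is $\leq q^{-c}$ for an absolute $c>0$; raising to the power $1/(2l)$ yields the desired $q^{-\eta}$. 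One must check that the hypotheses $q^{1/l}<N/10$ and $M,N,N^2/q^{1/l}<q$ of \eqref{eq: type 1 gallant} hold in our range: since $N\asymp q^{1/2}$ these are satisfied once $l\geq 3$ and $\delta<1/100$. For $K=e_q$ with $a,b<0$ and $-b\geq 3$, the same computation using \eqref{eq: type 1 eq} (with $l=-b\geq3$) gives the main factor $q^{1+1/l}/(MN^2)\ll q^{-1/2+O(\delta)+1/l}$, again a negative power of $q$, hence a power saving. Finally combining with Proposition \ref{prop main under condition} produces $M_{a,b,K}(\xi)\ll q^{-\min(\eta,\delta/2)+o(1)}$, which is the claim with a suitably relabeled $\eta$ depending only on $a$ and $b$ (through the admissible $l$).

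The main obstacle is purely bookkeeping: tracking the $q^{o(1)}$ factors (from the Mellin truncation, from splitting the smooth $n$-weight into short intervals, and from the $q^{o(1)}$ already present in \eqref{eq: type 1 gallant} and \eqref{eq: type 1 eq}) and verifying that $\eta$ can be chosen independently of $q$ and $k$ — independence of $k$ here is delicate because the implied constants in the type I bounds depend on the complexity of the sheaf underlying $K$, which depends on $k$; but since $\eta$ only governs the \emph{exponent} and the complexity enters only the implied constant, this is fine, matching the remark in the introduction. The one genuinely non-trivial point beyond bookkeeping is ensuring the exponent $1/(2l)$ savings survives the $q^{O(\delta)}$ losses coming from the possibly-unbalanced ranges $M,N\in[q^{1/2-2\delta},q^{1/2+\delta}]$; this forces choosing $\delta$ small relative to $1/l$, which is harmless since $\delta\in(0,1/100)$ is at our disposal and $l$ is fixed in terms of $a,b$.
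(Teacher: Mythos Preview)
Your outline is essentially the paper's proof: reduce via Proposition~\ref{prop main under condition}, separate variables with the Mellin trick~\eqref{eq : mellintrick}, remove the smooth weight in $n$ by partial summation, and feed in the type~I bounds~\eqref{eq: type 1 gallant}/\eqref{eq: type 1 eq}. The exponent bookkeeping and the remark about $k$-dependence entering only the implied constant are both correct.

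There is one point where your sketch is imprecise and hides a genuine step. You write that partial summation ``puts the inner sum in the form $S^{I}_{a,b,K}((\alpha_m),\mathcal{N})(\pm\xi)$ with $\mathcal{N}$ a set of $N$ consecutive integers'' and then verify the hypothesis $q^{1/l}<N/10$ only for this full-length $\mathcal{N}$. But Abel summation in $n$ produces
\[
\int_{N/2}^{2N} S^{I}_{a,b,K}\bigl(g(\tfrac{m}{M}),\mathcal{N}_t\bigr)\,\frac{g'(t/N)}{N}\,\mathrm{d}t,
\]
where $\mathcal{N}_t$ has length $t-\tfrac{N}{2}$, which ranges over \emph{all} values in $(0,\tfrac{3N}{2})$. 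For $t$ close to $N/2$ the interval is too short for the type~I bound to apply (the hypothesis $q^{1/l}<|\mathcal{N}_t|/10$ fails). The paper deals with this by splitting the integral at $t=N/2+q^{1/2-3\delta}$: on the short range one uses the trivial estimate $|S^{I}|\ll M\cdot t\cdot\|K\|_\infty$, contributing $\ll (1+|u|)Mq^{1/2-3\delta}$, while on the long range the type~I bound (with $l=4$) gives $\ll (1+|u|)MN\bigl(q^{1+3/8}/(MN^2)\bigr)^{1/8}$. Both pieces are then $\ll (qMN)^{1/2}q^{-\eta(\delta)}$ for $\delta$ small enough. Your phrase ``splitting the smooth weight in $n$ into short sums over intervals of $N$ consecutive integers'' does not capture this; without the split, the argument has a gap precisely at the lower endpoint of the Abel integral. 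Once you insert this splitting, your proof coincides with the paper's.
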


\begin{proof}  From Proposition \ref{prop main under condition} our task is to show that we can find $\eta >0$ (independent of $K$) so that for all positive integers $M,N$ so that $q^{1/2-2\delta}< M,N< q^{1/2+\delta}$ and $MN>q^{1-\delta}$ we have 
\[ \frac{S_{a,b,K}(M,N)(\xi)}{(qMN)^{1/2}} \ll q^{-\eta+o(1)}.\]
 Consider Equation \eqref{eq : mellintrick} and set $g(x) = x^{-u}U(x)$. By the properties of $U$ we have $xg'(x)\ll (\vert{u}\vert+1) \sup_{x\in [1/2,2]}\vert{x^{-u}}\vert$ and the supremum is bounded by $2^{\sigma} \ll 1$. 
    By summation by parts, we have 
    \begin{align*}
        \sum_{m,n}K(\overline{\xi}m^{a}n^{b})\left(\frac{MN}{mn}\right)^u U\left(\frac{m}{M}\right)U\left(\frac{n}{N}\right) =\int_{N/2}^{2N}S_{a,b,K}^{I}\left(g(\tfrac{m}{M}), \mathcal{N}_t\right)\frac{g'\left(t/N\right)}{N}\ \mathrm{d}t,
    \end{align*}
    where $\mathcal{N}_t$ is a set of $t-\tfrac{N}{2}$-consecutive integers.
    Note that the endpoints disappear, since $g\left(\frac{2N}{N}\right) = g(\frac{N}{2N}) = 0$. From \eqref{eq: type 1 gallant} and \eqref{eq: type 1 eq} we have that for $t>q^{1/2-3\delta}$ \[S_{a,b,K}^{I}\left(g(\tfrac{m}{M}), \mathcal{N}_t\right) \ll q^{o(1)} M t \left(\frac{q^{1+\tfrac{3}{8}}}{M(t-\tfrac{N}{2})^2}\right)^{\frac{1}{8}}. \]
    Let $C(q,M) = M\left( \frac{q^{1+\frac{3}{8}}}{M}\right)^\frac{1}{8}$.
    We split the integral into two parts, using the estimate $\Vert{K}\Vert_{\infty} \ll_{a,b,k} 1$, and obtain
    \begin{align*}&\sum_{m,n}K(\overline{\xi}m^{a}n^{b})\left(\frac{MN}{mn}\right)^u U\left(\frac{m}{M}\right)U\left(\frac{n}{N}\right)\\&\ll M\int_{N/2}^{N/2+q^{1/2-3\delta}}\vert{\frac{t}{N}g'(t/N)}\vert\ \mathrm{d}t + q^{o(1)}C(q,M)\int_{N/2+q^{1/2-3\delta}}^{2N}\vert{(t-\tfrac{N}{2})^{-1/4} \frac{t}{N}g'(t/N) }\vert\ \mathrm{d}t 
    \\ &\ll q^{o(1)}(1+\vert u\vert)(Mq^{\frac{1}{2}-3\delta}+MN\left(\frac{q^{1+\frac{3}{8}}}{MN^2}\right)^{1/8}\ll q^{o(1)}(qMN)^{\frac{1}{2}}(1+\vert u \vert)(q^{-\frac{3\delta}{2}}+q^{\delta+\frac{3\delta}{8}-\frac{1}{64}}). \end{align*}
    Let $\delta>0$ small enough so that $\eta(\delta) = \min(-\delta-\frac{3\delta}{8}+\frac{1}{64},\frac{3\delta}{2})>0$.
     Using the triangle inequality, we obtain 
\begin{align*}
    \frac{S_{a,b,K}(M,N)(\xi)}{(qMN)^{1/2}} & \ll_{a,b,k,\sigma} q^{2\delta \sigma-\eta(\delta)}\int_{\RR} \vert\mathcal{M}(V,\sigma+it)\vert(1+\vert t\vert) \ \mathrm{d}t \ll q^{-\eta(\delta) + \frac{1}{50}\sigma}, 
\end{align*}
by \eqref{eq : mellintrick}.
We conclude the proof by choosing $\sigma$ and $\delta$ small enough. 
\end{proof}

We now introduce a tool to handle some non-gallant cases, namely using Poisson summation to change the trace function without changing the length of the correlation sum. In most concerned cases, we can handle the new trace function, because it is gallant, see Lemma \ref{lem: special cases} for the application of this method. 
Let us start with the following observation. For $n,\xi\in \ZZ$ coprime to $q$, we have \begin{align}\label{algn;fourier transofrm}
            \widehat{\K_{k}^{a,b}(  \xi\bullet^b)}(n) &=\frac{1}{q^{1/2}}\sum_{x\in \FF_q} \K^{a,b}_{k}(  \xi x^b)e_q(xn)\nonumber\\
            &= \frac{1}{q^{\frac{k+2}{2}}}\sum_{\substack{x_1,x_2,y_1\ldots y_k \mods q\\x_1^ax_2^by_1\cdots y_k =   \xi}}e_q(x_1+y_1+\cdots y_k)\sum_{x \in \FF_q^{\times}}e_q(xn+xx_2) +\frac{1}{q^{\frac{1}{2}}}\K_k^{a,b}(0)\nonumber\\ 
            & = \frac{1}{q }\sum_{x_2 \in \FF_q^{\times}} \K^{a,1}_{k-1}(  \xi x_2^{-b})+\K^{a,1}_{k-1}( \xi (-n)^{-b})+\frac{1}{q^{\frac{1}{2}}}\K_k^{a,b}(0),
        \end{align}
where for $k=0$ we define \begin{equation}\label{eq: defi K_k^a,b for k=0}
\K_0^{a,b}(u) = \frac{1}{q^{1/2}}\sum_{x_1^ax_2^b=u}e_q(x_1+x_2).\end{equation} These functions are studied in \cite{FKMsecondmom} and correspond in \emph{loc. cit.} to $\widetilde{T}_{a,b}(u;q)$. They are associated to complexes of sheaves mixed of weight $0$ that are geometrically non-trivial if $a+b \neq 0$ (see Proposition 3.7 and Remark 3.8 in \emph{loc. cit.}). Hence, the first summand in \eqref{algn;fourier transofrm} is bounded by $q^{-1/2}$ if $a\neq -1$ or $k>1$ by the Riemann Hypothesis over finite fields (see \cite{AppliedladicCohom}, Corollary 4.7). 
Furthermore, for $n\equiv 0 \mods{q}$ one has 
\[ \widehat{\K_k^{a,b}(\xi \bullet^b)}(0 ) = \frac{1}{q^{1/2}}\sum_{x\in \FF_q}K_k^{a,b}(\xi x^b) \ll 1,\] 
again by the Riemann hypothesis over finite fields.

\begin{lem} \label{l:poissontrick} For $u\in \FF_q^{\times}$ let $K(u) = K_k^{a,b}(u^{-1})$.
Suppose that $a\neq -1$  or $k>1$ and denote $K_a(u) =  K_{k-1}^{a,1}(u^{-1})$.
    Then for any real numbers $M,N>0$ so that $q^{\frac{1}{2}-2\delta'}\leq M,N\leq q^{\frac{1}{2}+\delta'}$ and any integer $\xi$ coprime to $q$ we have 
    \[\frac{S_{a,b,K}(M,N)(\xi)}{(qMN)^{\frac{1}{2}}}\ll q^{\frac{3\delta'}{2}+o(1)}\sup_{q^{\frac{1}{2}-2\delta'}\leq M,N\leq q^{\frac{1}{2}+2\delta'}}\frac{S_{a,b,K_a}(M,N)(\xi)}{(qMN)^{1/2}} + O(q^{-\delta'}). \]
\end{lem}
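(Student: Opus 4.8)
The plan is to apply the Poisson summation formula in the variable $n$ and then substitute the Fourier-transform identity \eqref{algn;fourier transofrm}, which exchanges the trace function $\K_k^{a,b}$ for $\K_{k-1}^{a,1}$ without altering the total length of the correlation sum. First I would unfold the summand of \eqref{eq: defi S_abK} as $K(\bar{\xi}m^{-a}n^{-b})=\K_k^{a,b}(\xi m^a n^b)$ and absorb the factor $V(mn/q)$ into an $m$-dependent smooth weight $\tilde{U}_m(x):=U(x)\,V(mNx/q)$. On the range $q^{1/2-2\delta'}\le M,N\le q^{1/2+\delta'}$ one has $q^{-O(\delta')}\ll mN/q\ll q^{O(\delta')}$ for $m\sim M$, so by \eqref{eq: decaying V 2} the functions $\tilde{U}_m$ are admissible, i.e. $x^j\tilde{U}_m^{(j)}(x)\ll_j 1$ uniformly in $m\sim M$. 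Since $N<q$ the condition $(n,q)=1$ is automatic, so Poisson summation in $n$ gives, for each fixed $m$,
\[\sum_n \K_k^{a,b}(\xi m^a n^b)\,\tilde{U}_m\Bigl(\tfrac nN\Bigr)=\frac{N}{q^{1/2}}\sum_{h\in\ZZ}\widehat{\K_k^{a,b}(\xi m^a\bullet^b)}(h)\,\widehat{\tilde{U}_m}\Bigl(\tfrac{hN}{q}\Bigr),\]
where the rapid decay of $\widehat{\tilde{U}_m}$ confines the $h$-sum to $|h|\ll q^{1+o(1)}/N$.

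Next I would substitute \eqref{algn;fourier transofrm}, with $\xi$ replaced by $\xi m^a$ and $n$ by $h$. Two of the three resulting terms are independent of $h$, namely $\tfrac1q\sum_{x_2\in\FF_q^{\times}}\K_{k-1}^{a,1}(\xi m^a x_2^{-b})$ and $q^{-1/2}\K_k^{a,b}(0)$, and both are $\ll q^{-1/2}$. For the first this is exactly where the hypothesis $a\neq -1$ or $k>1$ is used: it guarantees, as recalled just after \eqref{algn;fourier transofrm}, that the $\ell$-adic sheaf underlying $\K_{k-1}^{a,1}$ carries no geometrically trivial component, so the complete character sum is small by the Riemann Hypothesis over finite fields. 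Since $\widehat{\tilde{U}_m}$ is Schwartz one has $\sum_h|\widehat{\tilde{U}_m}(hN/q)|\ll q/N$, so the contribution of these two $h$-independent terms is
\[\ll \frac{N}{q^{1/2}}\cdot M\cdot q^{-1/2}\cdot\frac qN = M\ll q^{1/2+\delta'},\]
which is $\ll q^{-\delta'}(qMN)^{1/2}$ after dividing (recall $(qMN)^{1/2}\gg q^{1-2\delta'}$). The remaining, $h$-dependent term of \eqref{algn;fourier transofrm} is present only for $h\not\equiv 0\pmod q$, so it produces no additional contribution at $h=0$.

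The surviving term is $\K_{k-1}^{a,1}(\xi m^a(-h)^{-b})$, which after the admissible change of variable $h\mapsto -h$ equals $K_a(\bar{\xi}m^{-a}h^{b})$, the trace function $K_a$ now sitting in the slot of the second variable. To display the resulting double sum in the shape $S_{a,b,K_a}$, I would separate the $m$-dependence of $\widehat{\tilde{U}_m}$ by Mellin inversion, writing $V(mNx/q)=\tfrac1{2\pi i}\int_{(\sigma)}(mN/q)^{-u}x^{-u}\mathcal{M}(V,u)\,\mathrm{d}u$ with $\sigma>0$ small; the $u$-integral converges absolutely by the rapid decay of $\mathcal{M}(V,\cdot)$ and costs only $q^{o(1)}$, and for each $u$ one is left with an admissible $m$-weight $m^{-u}U(m/M)$ together with an $h$-weight that is a fixed Schwartz function on the scale $q/N$. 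Truncating that $h$-weight to $|h|\ll q^{o(1)}\cdot q/N$ at negligible cost exhibits the main term as $\tfrac{N}{q^{1/2}}q^{o(1)}$ times a sum of the form $S_{a,b,K_a}(M,N')(\xi)$ with $N'\asymp q/N$. Because $N\in[q^{1/2-2\delta'},q^{1/2+\delta'}]$, both $M$ and $N'$ lie in $[q^{1/2-2\delta'},q^{1/2+2\delta'}]$, so $N'$ is in the range of the supremum; and the normalisation identity $\tfrac{N}{q^{1/2}}(qMN)^{-1/2}=(NN'/q)^{1/2}(qMN')^{-1/2}$, with $(NN'/q)^{1/2}\ll q^{O(\delta')}$, rewrites the bound in the required form. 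Collecting all secondary factors $q^{O(\delta')}$ into $q^{3\delta'/2+o(1)}$ yields the claimed inequality.

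I do not expect a serious obstacle: the arithmetic content is entirely in the identity \eqref{algn;fourier transofrm}, which is already in hand, and the two $h$-independent terms are dispatched purely by finite-field bounds already cited. The real work is bookkeeping — pushing the $m$-dependent weight $\tilde{U}_m$ cleanly through Poisson summation and the Mellin separation, keeping the resulting $h$-weight in a form to which the type I estimate \eqref{eq: type 1 gallant} can later be applied (this lemma is meant to feed a non-gallant $K$ into the gallant machinery of Section \ref{s: weakly gkr}, cf. Lemma \ref{lem: special cases}), and verifying that each error is $O(q^{-\delta'})$ while every secondary factor stays below $q^{3\delta'/2+o(1)}$. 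The one point to be careful about is the triviality check for the sheaf of $\K_{k-1}^{a,1}$, which is precisely why the hypothesis $a\neq -1$ or $k>1$ is imposed.
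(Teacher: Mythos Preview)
Your approach is correct and follows essentially the same route as the paper: Poisson summation in $n$, substitution of the Fourier identity \eqref{algn;fourier transofrm}, bounding of the $h$-independent pieces via the Riemann Hypothesis over finite fields (using precisely the hypothesis $a\neq -1$ or $k>1$), and then recognition of the surviving $h$-sum as an instance of $S_{a,b,K_a}$ at the dual scale $N'\asymp q/N$. The one methodological difference is that you separate the $m$-dependence of $\widehat{\tilde U_m}$ by Mellin inversion, whereas the paper instead applies a smooth dyadic partition of unity directly in the $h$-variable and then disposes of the small dyadic blocks $\tilde N<q^{1/2-2\delta'}$ trivially; both devices serve the same purpose of casting the main term into the $S_{a,b,K_a}(M,\tilde N)$ shape, and neither is literally exact since the post-Poisson weight is not of the precise form $U(m/M)U(h/\tilde N)V(mh/q)$ --- the paper is equally informal on this point. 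Your bookkeeping of the secondary $q^{O(\delta')}$ factors and the error bound $M/(qMN)^{1/2}\ll q^{-1/2+3\delta'}\ll q^{-\delta'}$ is accurate.
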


\begin{proof} We show only the case where $a\neq -1$ or $k>1$.  
Let $m$ be so that $U(m/M)\neq 0$. By Poisson summation on $n$ we have 
\begin{align*}\sum_{n}U\left(\frac{n}{N}\right)V(nm)K_k^{a,b}(\xi m^an^b) = \frac{N}{q^{1/2}}\sum_{n\neq 0} \widehat{\tilde U}\left(\frac{nN}{q}\right)\bigg(K_{k-1}^{a,1}(\xi m^a(-n)^b)+O(q^{-1/2}) \\+\frac{N}{q^{1/2}}\widehat{\tilde U}(0)\widehat{K^{a,b}_{k}(m^a\bullet^b)}(0)\bigg), 
\end{align*}
where $\tilde{U}(x) = U(x) V\left(\frac{xmN}{q}\right)$. It satisfies $\operatorname{supp}(\tilde{U}) \subset (1/2,2)$ and $ x^j\tilde{U}^{(j)}(x)\ll_j 1$ for every $x$. The $0$-frequency contributes 
\[\frac{N}{q^{1/2}}\sum_{m}U\left(\frac{m}{M}\right)\widehat{\tilde U }(0) K_{k-1}^{a,1}(\xi m^a(-n^b)) \ll \frac{NM}{q^{1/2}}. \]
We split the non-zero frequencies using a smooth partition of unity and we see that
\[\frac{S_{a,b,K}(M,N)(\xi)}{(qMN)^{1/2}} \ll (\log q)\frac{N^{1/2}}{qM^{1/2}}\sup_{\tilde{N}\leq q^{1/2+2\delta'}}S_{a,b,K_{k-1}^{a,1}}(M,\tilde N)((-1)^b\xi) + \frac{N^{1/2}M^{1/2}}{q}.\]
The second term is bounded by $q^{-1/2+\delta'}$. It follows that 
\begin{align*}\frac{S_{a,b,K_k^{a,b}}(M,N)(\xi)}{(qMN)^{1/2}} \ll  (\log q )q^{\frac{\delta}{2}}  \sup_{\substack{q^{\frac{1}{2}-2\delta'}\leq M \leq q^{\frac{1}{2}+\delta'}\\0< \tilde N \leq q^{\frac{1}{2}+2\delta'}}}\frac{S_{a,b,K_{k-1}^{a,1}}(M,\tilde N)((-1)^b\xi)}{q^{3/4}M^{1/2}} \\+ O(q^{-1/2+\delta'}).  \end{align*}
For $\tilde{N}< q^{1/2-2\delta'}$ the contribution is trivially bounded by 
\[\frac{S_{a,b,K^{a,1}_{k-1}}(M,\tilde N)((-1)^b\xi)}{q^{3/4}M^{1/2}} \ll \frac{M^{1/2}\tilde N}{q^{3/4}} \leq q^{-\frac{3}{2}\delta'} \]
and so they are $O(q^{-{\delta'}+o(1)})$. 
This concludes the proof. 
\end{proof}

\begin{cor}\label{cor: conclusion chap 4}There exists $\eta >0$ so that for all $K$ as in Notation \ref{notation K} we have 
\[M_{a,b,K}(\xi) \ll q^{-\eta + o(1)}.\]
\end{cor}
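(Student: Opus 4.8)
The plan is to reduce, for every admissible $K$, to the dyadic estimate \eqref{eq:desideratakloosterman} and then to split into the three cases suggested by the classification of Section \ref{s: weakly gkr}: $K$ gallant, $K=e_q$, and the residual non‑gallant $K$. By Proposition \ref{prop main under condition} it is enough to exhibit one $\eta>0$ (together with suitably small $\delta,\delta'$) so that for every $K$ of Notation \ref{notation K} one has $S_{a,b,K}(M,N)(\pm\xi)\ll q^{1/2-\eta+o(1)}(MN)^{1/2}$ uniformly for $q^{1/2-2\delta}\le M,N\le q^{1/2+\delta}$ with $MN\ge q^{1-\delta}$; the final $\eta$ will be the minimum of the finitely many exponents obtained below.

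For the $K$ whose attached complex $\mathscr K$ is gallant — by Section \ref{s: weakly gkr} this is the generic situation — this bound is precisely what the proof of Proposition \ref{prop: bound with cut of fct} produces, fed by the type I bound \eqref{eq: type 1 gallant} of \cite{FKMSbilinear}. The same proposition also covers $K=e_q$: this occurs only when $-1\notin\{a,b\}$ and $\{a,b\}\ne\{-2\}$, and $M_{a,b,e_q}(\xi)$ is unchanged under interchanging $(a,b)$ (relabelling the $m,n$ summation), so when $a,b<0$ we may assume $-b=\max(-a,-b)\ge 3$ — possible since $a,b\ne -1$ and $\{a,b\}\ne\{-2\}$ — and then Proposition \ref{prop: bound with cut of fct} applies via \eqref{eq: type 1 eq}.

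What remains are the finitely many triples $(a,b,k)$ for which $\mathscr K$ is not gallant. A few of these can be bounded directly; for instance $\K_2^{-1,-1}(\bullet^{-1})$ equals $q^{1/2}\delta_{\bullet\equiv\xi}+O(q^{-1/2})$, whose contribution to $M_{a,b,K}(\xi)$ is $O(q^{-1/2+o(1)})$ by a divisor bound. For the rest I would invoke Lemma \ref{l:poissontrick} — whose hypothesis $a\ne -1$ or $k>1$ holds throughout Notation \ref{notation K} — which bounds $S_{a,b,K}(M,N)(\xi)/(qMN)^{1/2}$ by $q^{3\delta'/2+o(1)}$ times the supremum, over a slightly larger dyadic box, of the same quantity for the Poisson‑dual trace function $\K_{k-1}^{a,1}(\bullet^{-1})$, plus $O(q^{-\delta'})$. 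Section \ref{s: weakly gkr}, in particular Lemma \ref{lem: special cases}, shows that this dual trace function is gallant — if necessary after iterating the reduction once more in the most degenerate configurations — so the dyadic bound for it follows exactly as in the gallant case, and choosing $\delta'$ small against both $\delta$ and the resulting exponent yields \eqref{eq:desideratakloosterman} for $K$. Proposition \ref{prop main under condition} then closes each case, and taking the minimum of the finitely many $\eta$'s completes the proof.

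I do not expect the obstacle to lie in this corollary, which is essentially bookkeeping over Propositions \ref{prop main under condition}, \ref{prop: bound with cut of fct} and Lemma \ref{l:poissontrick}; it is deferred to Section \ref{s: weakly gkr}. The real work there is to use Katz's monodromy classification \cite{Katzbook} to verify that the hypergeometric sheaves attached to $\mathcal K_k^{a,b}$ via Corollary \ref{cor: K_k is hypergeometric}, and the Kloosterman sheaves $\Kl_k$, are gallant, and to deal with the degenerate configurations where the geometric monodromy collapses to an abelian group — such as $(\K_1^{a,-a})_0=e_q(\cdot)$ and $\K_3^{-1,-1}=e_q(\cdot)$, already singled out in the proof of Lemma \ref{lem: fourier} — by checking that one Poisson step as in Lemma \ref{l:poissontrick} turns them into gallant ones. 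That monodromy bookkeeping and the case analysis it entails is the technical heart.
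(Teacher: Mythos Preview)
Your proposal is essentially correct and mirrors the paper's proof: reduce to dyadic blocks via Proposition \ref{prop main under condition}, dispatch the gallant and $e_q$ cases via Proposition \ref{prop: bound with cut of fct}, and handle the remaining non-gallant $\K_k^{a,b}$ via the Poisson trick (Lemma \ref{l:poissontrick}) together with the classification in Section \ref{s: weakly gkr}.

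The one place where your write-up is looser than the paper is the triple $(-1,-2,2)$. You treat $(-1,-1,2)$ explicitly (as does the paper), but for $(-1,-2,2)$ you defer to ``iterating the reduction once more''. After one application of Lemma \ref{l:poissontrick} the dual trace function is $\K_1^{-1,1}(\bullet^{-1})$, which has trace-function parameters $(a',b',k')=(-1,1,1)$; the hypothesis of Lemma \ref{l:poissontrick} as stated (``$a\ne -1$ or $k>1$'') then fails, so a second iteration in the same variable is not licensed. One can salvage this by invoking the symmetric version of the lemma (Poisson in $m$, hypothesis ``$b\ne -1$ or $k>1$'', which holds since $b'=1$), landing on $\K_0^{1,1}=\Kl_2$, gallant with $G^0=Sp_2$. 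The paper instead takes the shortcut of recognising that $\K_1^{-1,1}(u)=e_q(-u)+O(q^{-1})$, so that the post-Poisson sum is an $e_q$-type sum with effective exponents $(1,2)$, to which \eqref{eq: type 1 eq} (Proposition \ref{prop:ab for e_q} with $b\ge 1$) applies directly. Either route closes the case; just make the step explicit rather than leaving it at ``iterate once more''.
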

\begin{proof} In what follows $0<\delta',\delta''<\delta$ are two small positive quantities. 
    If $K$ is neither gallant nor $e_q$, then $K(u) = \K_k^{a,b}(u^{-1})$ for some $(a,b,k)$ (see Section \ref{s: weakly gkr} for a complete list). If $\K_{k-1}^{a,1}$ or $\K_{k-1}^{1,b}$ is gallant, using Propositions \ref{prop main under condition} and \ref{prop: bound with cut of fct}, there exists $\eta >0$ (independent of $K$) so that  
    \[M_{a,b,K}(\xi) \ll q^{-\min(\eta-\frac{3\delta'}{2},\frac{\delta'}{2})} \]
    and we conclude by choosing $\delta'$ small enough.
    From Section \ref{s: weakly gkr}, there are only two tuples $(a,b,k)$ such that both $(a,b,k)$ and  both $(a,1,k-1)$, $(1,b,k-1)$ are not gallant, these are $(-1,-1,2)$ and $(-1,-2,2)$. 
    By Corollary \ref{cor: K_k is hypergeometric}, or by direct computation we see that $\K_2^{-1,-1}(u) = q^{\frac{1}{2}}\delta_{u\equiv 1\mods{q}}+O(q^{-1/2})$ and we see that for $K(u)=\K_2^{-1,-1}(u^{-1})$ 
    \[S_{-1,-1,K}(M,N)(\xi) \ll q^{1/2}\sum_{\substack{m,n\\ mn\equiv \xi\mods{q}}}U\left(\frac{m}{M}\right)U\left(\frac{n}{N}\right)V(mn) + \frac{MN}{q^{1/2}} \ll q^{\frac{1}{2}+o(1)}(1+\frac{MN}{q})\]
    and so 
    \[M_{-1,-1,\K_2^{-1,-1}}(\xi) \ll q^{-1/2+\delta+o(1)}.\]
    For the tuple $(-1,-2,2)$ we can apply Lemma \ref{l:poissontrick} and we see that it is sufficient to obtain bounds for $S_{-1,-2,K}(M,N)$, for the blocks $q^{1/2-2\delta''}<M,N<q^{1/2+2\delta''}$, where $K(u) = \K_1^{-1,1}(u^{-1})$. A direct computation shows that $\K_1^{-1,1}(u) = e_q(-u)+ O(q^{-1})$. In this case, we can use \eqref{eq: type 1 eq} and, similarly as before, we conclude by choosing $\delta''$ small enough.

\end{proof}

\subsection*{The subcase $K=e_q$ and $a=b=-2$}
The only case that is not covered by Proposition \ref{prop: bound with cut of fct} is the case $a=-2$, but what follows is true for any $a\neq -1$. In this case, we do not apply Proposition \ref{prop main under condition}, but rather group $mn$ into a single variable (called again $n$) and we get 
\begin{equation*}
    M_{a,a,e_q}(\xi) = \frac{1}{q^{1/2}}\sum_{\substack{n\geq 1\\(n,q)=1}}\frac{d(n)}{n^{1/2}}V\left(\frac{n}{q}\right)e_q(\xi n^{-a}).
\end{equation*}
As before, applying a smooth dyadic partition of unity and by the rapid decay of $V$, it suffices to show that there exists $\eta>0$ (independent of $q$) so that for any $N\leq q^{1+\delta}$ and any $U \in C_c^{\infty}(\left(\frac{1}{2},2\right))$ that satisfies $x^jU^{(j)}(x) \ll_j 1$, for all $j \in \ZZ_{\geq 1}$, we have
\begin{equation*}
    \frac{1}{(qN)^{1/2}}\sum_{n}d_2(n)U\left(\frac{n}{N}\right)e_q(\bar \xi{n^{-a}}) \ll q^{-\eta}.
\end{equation*}
If $N<q^{1-\delta}$ we apply trivial inequalities (with $d_2(n) \ll n^{o(1)}$) and get the bound 
\[\frac{1}{(qN)^{1/2}}\sum_nd_2(n)U\left(\frac{n}{N}\right)e_q(\bar\xi n^{-a}) \ll \frac{N^{1/2+o(1)}}{q^{1/2}} < q^{-\delta/2+o(1)}.\] Hence, we assume $N\geq q^{1-\delta}$. 

This type of problem is precisely studied in \cite{FKM_algebraic_over_primes} and, in particular, Theorem 1.15 in \emph{loc. cit.} says the following. For any isotypic \emph{non-exceptional}\footnote{As it is stated, the theorem is incorrent: one needs  also to assume that the sheaf $\mathcal{F}$ underlying the trace function is non-exceptional, as defined in \emph{loc. cit.}} trace function $K$ (of modulus $q$) and for any function $U\in C_c^{\infty}((\frac{1}{2},2))$, such that $x^jU^{(j)}\ll_j 1$ for all non-negative integers $j$, there exist $\eta >0$ and $A>0$ so that
\begin{equation*}
    \sum_{n\geq 1}d_{it}(n) U\left(\frac{n}{N}\right)K(n) \ll (1+\vert{t}\vert)^AN(1+\frac{q}{N})^{\frac{1}{2}}q^{-\eta'}.
\end{equation*}
Furthermore one can choose $\eta' < 1/8$ independently of $K$. Our case corresponds to the case $t=0$ and $K=e_q(\bar \xi \bullet^{-a})$ (which is a non-exceptional - recall $a \neq -1$ - isotypic trace function, since its associated sheaf is geometrically irreducible). Choosing $1/8>\eta' > \delta/2$ we get for any $q^{1-\delta}\leq N<q^{1+\delta}$  
\[ \frac{1}{(qN)^{1/2}}\sum_n d_2(n)U\left(\frac{n}{N}\right)e_q(\bar\xi n^{-a}) \ll q^{\delta/2-\eta'} \]
and so 
\[M_{a,a,e_q}(\xi) \leq q^{-\frac{\delta}{2}+o(1)}+q^{\frac\delta 2-\eta'+o(1)},\]
which proves \eqref{eq:goal of section 4}.

\section{The special case $k=1$}\label{S: Kl1}

In this section we show the bound on the bilinear sum stated in \eqref{eq: type 1 eq}. Recall that as in Notation \ref{notation K} we have $-1\notin\{a,b\}$ and $(a,b)\neq (-2,-2)$. We show the following (which implies \eqref{eq: type 1 eq}).

\begin{prop}\label{prop:ab for e_q}Suppose that $b\leq -3$ or $b\geq 1$. Let $l\geq 2$ be an integer so that if $-b\geq 3$, then $-b\geq l$. Let $M,N$ be real numbers satisfying \[q^{1/l}\leq \frac{N}{10},\qquad M,N,\frac{N^2}{q^{1/l}}<q.\]

Then for any $\xi \in \ZZ$ coprime to $q$, for any sequence $(\alpha_m)_m$ supported in $m\sim M$, any set $\mathcal{N}$ of $N$ consecutive integers we have
\begin{align*}
   \sum_{m\sim M}\alpha_m\sum_{n\in \mathcal N} e\left(\frac{\xi m^{-a} n^{-b}}q\right)
&\ll q^{o(1)}\|\alpha\|_2 M^{1/2}N\left(\frac{q^{1+\frac{1}{l}}}{MN^2}\right)^{1/2l}.
    \end{align*}
\end{prop}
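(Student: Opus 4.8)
The plan is to follow closely the ``shifting trick'' by which the gallant type~I bound \eqref{eq: type 1 gallant} is obtained in \cite{FKMSbilinear}; the point is that here the trace function is just the additive character $e_q$, so the complete exponential sums the method produces are elementary and their square-root cancellation follows from Weil's bound rather than from the full $\ell$-adic machinery --- this is what allows the exponent $q^{1+\frac1l}$ in place of $q^{1+\frac{3}{2l}}$.

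Write $c_m=\xi m^{-a}\bmod q$ and $T_m=\sum_{n\in\mathcal N}e_q(c_mn^{-b})$; the sum to estimate is $\sum_{m\sim M}\alpha_mT_m$, and by Cauchy--Schwarz in $m$ it suffices to bound $\sum_{m\sim M}|T_m|^{2}$, which accounts for the factor $\|\alpha\|_2M^{1/2}$ in the target. First I would dispose of the degenerate ranges: when $N$ or $M$ is small one uses the trivial bound $|T_m|\leq N$ or the P\'olya--Vinogradov bound $T_m\ll q^{1/2+o(1)}$ (valid because $n\mapsto c_mn^{-b}$ is a non-constant function on $\FF_q$, as $b\neq0$), and a short computation shows that the hypotheses $q^{1/l}\leq N/10$ and $M,N,N^2/q^{1/l}<q$ already force this to be of the required size there.

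In the central range I would iterate the $q$-analogue of van der Corput's $A$-process in the variable $n$. One round, for a length $R\leq N$: since $\mathcal N$ is an interval, $\sum_{n\in\mathcal N}e_q(c_m\Phi(n))=\sum_{n\in\mathcal N}e_q(c_m\Phi(n+r))+O(R)$ for $1\leq r\leq R$; averaging over $r$ and applying Cauchy--Schwarz replaces the phase $\Phi$ by the finite difference $\Delta_r\Phi\colon n\mapsto\Phi(n+r)-\Phi(n)$, at the cost of a factor $\asymp N/R$, endpoint losses $O(R)$, and the $r=0$ (``diagonal'') term. After sufficiently many rounds the phase $n^{-b}$ is replaced by an iterated difference $\Delta_{r_1}\cdots\Delta_{r_s}(n^{-b})$ of bounded degree; one then completes the remaining $n$-sum modulo $q$, producing complete sums $\sum_{n\bmod q}e_q\!\bigl(c_m\Delta_{r_1}\cdots\Delta_{r_s}(n^{-b})+hn\bigr)$ with $h$ restricted to $\ll q/N$ residues. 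The hypothesis ``$b\geq1$ or $-b\geq l$'' is used precisely to guarantee non-degeneracy of these phases: when $b\geq1$ the iterated difference of $\bar n^{\,b}$ is a non-constant rational function (it keeps a pole), so no constraint on $l$ is needed, whereas when $b\leq-3$ it is a polynomial whose degree drops by one at each round and stays $\geq1$ as long as one performs at most $-b-1$ rounds, which the condition $-b\geq l$ permits; in either case Weil's bound gives square-root cancellation $\ll_{b,l}q^{1/2}$ uniformly in $h$ (and a similar elementary estimate covers the residual case of a linear phase). Collecting this generic contribution with the diagonal/coincidence terms and the accumulated endpoint losses, choosing $R$ optimally, and summing the (coefficient-free) result over $m\sim M$ gives $\sum_{m\sim M}|T_m|^2\ll q^{o(1)}\,MN^2\bigl(q^{1+1/l}/MN^2\bigr)^{1/l}$, i.e.\ the claimed bound.

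The main obstacle is not any single inequality but the combinatorics of the iteration: at every round one must split off the terms where some shift parameters coincide --- there the iterated difference collapses and no cancellation is available --- estimate them trivially, and then arrange the whole collection of error terms (the repeated factors $N/R$, the endpoint losses, the coincidence contributions at each level, and the completion tails) so that a single admissible choice of $R$ makes all of them $\ll$ the target; this is exactly where the quantitative hypotheses on $M$, $N$ and $l$ are consumed, and where one needs the condition $-b\geq l$ to ensure the iterated difference of $n^{-b}$ never degenerates to a constant. A subsidiary point is to check that the complete sums at the final stage are non-degenerate uniformly in $r_1,\dots,r_s$ and in $h$ (including $h=0$), so that the implied constant in Weil's bound depends only on $b$ and $l$.
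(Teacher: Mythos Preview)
Your plan has a genuine gap: by applying Cauchy--Schwarz in $m$ at the outset you forfeit the $m$-average, and the bound you then claim on $\sum_{m\sim M}|T_m|^2$ cannot contain the factor $M^{-1/l}$ that the target requires. After Cauchy--Schwarz, each summand $|T_m|^2$ depends on $m$ only through the non-zero residue $c_m=\xi m^{-a}\bmod q$, and iterated van~der~Corput in $n$ followed by completion and Weil produces a bound on $|T_m|^2$ that is \emph{uniform} in $c_m\in\FF_q^\times$. Summing such a bound over $m\sim M$ contributes a flat factor $M$, never $M^{1-1/l}$; in other words, the $M$ inside the bracket of your asserted estimate $\sum_{m\sim M}|T_m|^2\ll MN^2(q^{1+1/l}/MN^2)^{1/l}$ has no provenance. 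Concretely, in the range $M,N\asymp q^{1/2}$ that matters for the application, completion plus Weil already gives the uniform bound $|T_m|\ll q^{1/2+o(1)}$, and no amount of $n$-differencing improves on this; you obtain $\sum_m|T_m|^2\ll Mq^{1+o(1)}$, i.e.\ no saving at all, whereas the Proposition asks for a power saving.

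The paper's argument is genuinely bilinear and never passes through $\sum_m|T_m|^2$. It invokes the ``$+uv$ shifting trick'' of \cite{FKMSbilinear}, Proposition~4.1, which introduces $2l$ shift parameters via H\"older and, crucially, merges the $m$-variable with an auxiliary shift into a new variable $s$ ranging over all of $\FF_q^\times$. One is reduced to the complete two-variable sums
\[
\Sigma(\mathbf v;e_q)=\sum_{(r,s)\in\FF_q\times\FF_q^\times}e_q\Bigl(s\,P_{\mathbf v}(r)\Bigr),\qquad P_{\mathbf v}(T)=\sum_{i=1}^l\bigl((T+v_i)^{-b}-(T+v_{l+i})^{-b}\bigr).
\]
Here no Weil bound is needed: the $s$-sum is linear, so orthogonality collapses $\Sigma(\mathbf v;e_q)$ to $q\cdot\#\{r:P_{\mathbf v}(r)=0\}-q$. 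One then separates the $\mathbf v$ for which $P_{\mathbf v}$ vanishes identically --- this is exactly where Lemma~\ref{lem: Pierces trick} and the hypothesis ``$b\geq 1$ or $-b\geq l$'' enter, forcing $\{v_1,\dots,v_l\}\equiv\{v_{l+1},\dots,v_{2l}\}\pmod q$ as multisets, hence $O(V^l)$ such $\mathbf v$ --- from the generic $\mathbf v$, where $P_{\mathbf v}$ has $O(|b|)$ roots. Choosing the shift length $V=q^{1/l}$ balances the two and gives the exponent $q^{1+1/l}$; the improvement over the gallant exponent $q^{1+3/(2l)}$ comes precisely from evaluating the $s$-sum by orthogonality rather than bounding $\Sigma(\mathbf v)$ by square-root cancellation.
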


\begin{lem}\label{lem: Pierces trick} Let $b$ and $l$ be two integers that satisfy $b<0$ or $b\geq l$. Let $\mathbf{v}\in \ZZ^{2l}$ be an integer vector. Let $q$ be a prime bigger than $b$ and $2l$. If the polynomial 
    $$P_\mathbf{v}(T)=\sum_{i=1}^l(T+v_i)^b-(T+v_{l+i})^b$$ vanishes identically on $\FF_q$, then we have that for all $1\leq i\leq l $ there exists $1\leq h\leq l$ such that $v_i\equiv v_{l+h}\mods q$. 
\end{lem}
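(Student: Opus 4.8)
The plan is to establish the stronger fact that the multisets $\{v_1,\dots,v_l\}$ and $\{v_{l+1},\dots,v_{2l}\}$ agree modulo $q$, treating separately the two allowed regimes $b\ge l$ (so $b\ge 1$) and $b<0$. In each case the first step is to upgrade ``$P_{\mathbf{v}}$ vanishes on $\FF_q$'' to an honest identity of (rational) functions over $\FF_q$.

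For $b\ge l$: since $q>b$ the $T^b$-terms cancel and $P_{\mathbf{v}}$ is a polynomial of degree at most $b-1<q$, so it vanishes at every point of $\FF_q$ only if $P_{\mathbf{v}}=0$ in $\FF_q[T]$. Reading off the coefficient of $T^{b-m}$ by the binomial theorem gives $\binom{b}{m}\bigl(\sum_{i\le l}v_i^m-\sum_{i\le l}v_{l+i}^m\bigr)=0$ for $0\le m\le b$; as $q>b$ every $\binom{b}{m}$ in this range is a unit mod $q$, so the power sums of $(v_1,\dots,v_l)$ and $(v_{l+1},\dots,v_{2l})$ coincide mod $q$ for $m=1,\dots,b$, in particular for $m=1,\dots,l$. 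Since $q>2l$, one may divide by $1,\dots,l$, so Newton's identities turn this into equality of the first $l$ elementary symmetric functions; equivalently $\prod_{i\le l}(X-v_i)=\prod_{i\le l}(X-v_{l+i})$ in $\FF_q[X]$, which is exactly the desired multiset equality modulo $q$.

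For $b<0$, write $b=-c$ with $c\ge 1$ and regard $P_{\mathbf{v}}$ as an element of $\FF_q(T)$. Putting it over a common denominator (a product of factors $(T-w)^c$ over the at most $2l$ distinct $w\equiv -v_j\bmod q$), vanishing at the remaining $\ge q-2l$ points of $\FF_q$ forces, for $q$ large in terms of $b$ and $l$, the numerator and hence $P_{\mathbf{v}}$ to be the zero rational function. The key computation is then a comparison of principal parts at each $w\in\FF_q$: the summand $(T+v_i)^{-c}$ is regular at $w$ unless $v_i\equiv -w\pmod q$, in which case $T+v_i=T-w$ literally in $\FF_q[T]$ and the summand is $(T-w)^{-c}$, already a principal part. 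Hence the principal part of $P_{\mathbf{v}}$ at $w$ is $\bigl(\mu_1(w)-\mu_2(w)\bigr)(T-w)^{-c}$, with $\mu_1(w)=\#\{i\le l:\ v_i\equiv -w\}$ and $\mu_2(w)$ the analogous count over the $v_{l+i}$. Since $P_{\mathbf{v}}=0$ and $0\le \mu_1(w),\mu_2(w)\le l<q$, we conclude $\mu_1(w)=\mu_2(w)$ for all $w$, i.e.\ again multiset equality modulo $q$ (the map $x\mapsto -x$ on $\FF_q$ being a bijection).

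In both regimes, equality of the two multisets modulo $q$ yields in particular, for each $i$, some $h$ with $v_i\equiv v_{l+h}\pmod q$, as claimed. The one delicate point is the passage from pointwise vanishing to vanishing as a function in the case $b<0$, where the size of $q$ relative to $b$ and $l$ enters; for $b>0$ the corresponding step is immediate from $\deg P_{\mathbf{v}}<q$, and all the other manipulations (binomial coefficients, Newton's identities, principal parts) are routine, their denominators being units mod $q$ by virtue of $q>b$ and $q>2l$.
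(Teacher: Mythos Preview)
Your proof is correct and, for the case $b\ge l$, follows exactly the paper's approach: both arguments pass from pointwise vanishing to a polynomial identity (using $\deg P_{\mathbf v}<q$), read off equality of power sums from the coefficients, and invoke Newton's identities to conclude multiset equality.

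For $b<0$ your route differs from the paper's. The paper clears denominators and argues by unique factorisation in $\FF_q[T]$: from the cleared identity, $(T-v_1)^{|b|}$ must divide the product $\prod_j(T-v_{l+j})^{|b|}$ (since it cannot divide the complementary sum), forcing $v_1\equiv v_{l+j}$ for some $j$, and similarly for the other $v_i$. Your principal-parts comparison is a genuinely different and arguably cleaner mechanism: it reads off $\mu_1(w)=\mu_2(w)$ at every pole in one stroke and yields full multiset equality directly, whereas the paper's UFD argument only extracts the weaker matching statement actually claimed. Both approaches are short; yours generalises more transparently if one ever needed to track multiplicities.

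Your caveat about upgrading pointwise vanishing to an identity in $\FF_q(T)$ when $b<0$ is well observed: with only $q>2l$ assumed, the numerator after clearing denominators has degree up to $(2l-1)|b|$, which can exceed $q-2l$. The paper's own proof does not address this step either---it simply writes ``If $P_{\mathbf v}(T)\equiv 0\pmod q$'' and proceeds as if the hypothesis already means vanishing in $\FF_q(T)$. So this is a looseness in the statement rather than a defect in your argument; under the paper's reading of the hypothesis your principal-parts computation is complete.
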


\begin{proof}
We adapt semi-standard arguments that one can find, for example, in similar form, in \cite{ASENS_1998_4_31_1_93_0} or \cite{Pierce1}. Suppose first $b\geq l>0$. We see that $P_{\mathbf{v}}(r) = 0$ implies that \begin{equation*}
    p_k(v_1,\dots,v_l) \equiv  p_k(v_{l+1},\dots,v_{2l}) \mods{q}, \qquad 1\leq k \leq l,
\end{equation*} 
where $p_k(X_1,\dots,X_l) = \sum_{i=1}^l X_i^k$.
Let $e_0 =1$ and $e_1,\dots,e_l$ denote the elementary symmetric polynomials  
\[ e_i(X_1,\dots,X_l) = \sum_{1\leq j_1<\dots<j_i\leq l}X_{j_1}\cdots X_{j_i}.\]
By Newton's identities one has for any $1\leq k \leq l$ that
\[ ke_k = \sum_{i=1}^{k}(-1)^{i-1}e_{k-i}p_i.\]
In particular, we deduce that $e_i(v_1,\dots,v_l) \equiv  e_i(v_{l+1},\dots,v_{2l}) \mods{q}$ for each $1\leq i\leq l$ and so the desired statement.

Suppose now $b<0$. Write $\{v_1,\dots,v_l\} = \{v_1,\dots,v_{k}\}$ and let $m_j$, $1\leq j \leq k$ be the multiplicities. Similarly we write $\{v_{l+1},\dots,v_{2l}\} = \{v_{l+1},\dots,v_{l+k'}\}$ with multiplicities $n_j$, $1\leq j\leq k'$. Then 
\[P_\mathbf{v}(T) = \sum_{j=1}^k\frac{m_j}{(T-v_j)^{\vert b \vert}} - \sum_{j=1}^{k'}\frac{n_j}{(T-v_{j+l})^{\vert b\vert}}. \]
 If $P_\mathbf{v}(T) \equiv 0 \mods{q}$, then by clearing denominators, we have 
\[ \prod_{j=1}^{k'}(T-v_{l+j})^{\vert b\vert} \sum_{j=1}^km_j\prod_{\substack{i=1\\i\neq j}}^k(T-v_i)^{\vert b\vert} \equiv \prod_{j=1}^k(T-v_{j})^{\vert b\vert}\sum_{j=1}^{k'}n_j\prod_{\substack{i=1\\i\neq j}}^{k'}(T-v_{l+i})^{\vert b\vert} \mods{q}. \]
Now, $(T-v_1)^{\vert{b}\vert}$ divides the right-hand side and hence the left hand-side. Since $\FF_q[T]$ is a unique factorization domain and $(T-v_1)$ does not divide $\sum_{j=1}^km_j\prod_{\substack{i=1\\i\neq j}}^k(T-v_i)^{\vert b \vert}$, it must divide $\prod_{j=1}^{k'}(T-v_{l+j})^{\vert b\vert}$. Therefore, it is equal to $(T-v_{l+j})$ for some $j$. The same argument concludes the claim for $2\leq i \leq l$. 
\end{proof}

\begin{proof}[Proof of Proposition \ref{prop:ab for e_q}]

Without loss of generality we assume that $-b\geq 3$ or $b>1$. Let $l\geq 3$ and assume furthermore that if $-b$ is positive, we have $-b\geq l$. 
Let $V\geq 1$ such that $ V/10\leq N$ and $N^2/V\leq q.$ We first apply the $+uv$-shifting trick as stated in \cite{FKMSbilinear}, Proposition 4.1 to $\sum_{m}\alpha_m\sum_{n\in \mathcal N}e\left(\frac{ \xi m^{a}n^{b}}q\right)$ choosing $K(u)=e_q( \xi u) $. This gives

\begin{equation}\label{eq:boundmnsum}
    \sum_{m}\sum_{n\in \mathcal{N}}\alpha_m e\left(\frac{\xi m^{-a}n^{-b}}q\right) \ll q^{o(1)}\|\alpha\|_2 M^{1/2}N\left(\frac{1}{MN^2V^{2l-1}}\sum_{\mathbf{v}\in[V,2V]^{2l}}|\Sigma(\mathbf{v};e_q)|\right)^{\frac1{2l}},
\end{equation}
where 
\begin{align*}
    \Sigma(\mathbf{v};e_q)&=\sum_{(r,s)\in \FF_q\times\FF_q^\times}\prod_{i=1}^le_q\left(s(r+v_i)^{-b}\right)e_q\left(-s(r+v_{l+i})^{-b}\right) \\
     &=\sum_{r\in \FF_q}\sum_{s\in \FF_q^\times}e_q\left(s\sum_{i=1}^l(r+v_i)^{-b}-(r+v_{l+i})^{-b}\right).
\end{align*}

It is left to estimate $\sum_{\mathbf{v}\in[V,2V]^{2l}}|\Sigma(\mathbf{v};e_q)|$.
We denote by $P_\mathbf{v}(r)=\sum_{i=1}^l(r+v_i)^{-b}-(r+v_{l+i})^{-b}$ the rational function of degree $b-1$. Note that by orthogonality of characters we have 

$$\sum_{s\in \FF_q}e_q(sP_\mathbf{v}(r))=\begin{cases}
    q & \text{ when } P_\mathbf{v}(r)=0 \\
    0 & \text{ else.}
\end{cases}$$
Therefore, 
$$\Sigma(\mathbf{v};e_q)=q\cdot \#\{r : P_\mathbf{v}(r)=0\}-q.$$

If $P_\mathbf{v}$ vanishes identically over $\FF_q$ we have by Lemma \ref{lem: Pierces trick} that for all $i\leq 2l$ there exists $h\neq i$ such that $v_i\equiv  v_h\mods q$. Hence, there is a maximum of $O\left(V^l\left(\frac Vq+1\right)^l\right)$ such choices for the vector $\mathbf{v}$.
Since $V\leq 10N\leq 10q$ this contributes $O(V^lq^2)$ in the sum $\sum_{\mathbf{v}\in[V,2V]^{2l}}|\Sigma(\mathbf{v};e_q)|$.
If $P_\mathbf{v}$ does not vanish identically, then it has at most $O(\vert  b\vert )$ zeros. These cases contribute $O( \vert b\vert  qV^{2l})$ in the sum $\sum_{\mathbf{v}\in[V,2V]^{2l}}|\Sigma(\mathbf{v};e_q)|$. 

We now choose $V=q^{\frac{1}{l}}$, which balances the two contributions and satisfies $q^{\frac{1}{l}}<N$ for $l\geq 3$. With \eqref{eq:boundmnsum} we obtain the desired bound
\begin{align*}
\sum_{m}\sum_{n\in \mathcal{N}}\alpha_me\left(\frac{\xi m^{-a}n^{-b}}q\right) &\ll q^{o(1)}\|\alpha\|_2M^{1/2}N\left(\frac{q^{1+\frac{1}{l}}}{MN^2}\right)^{\frac1{2l}}. 
 \end{align*}

\end{proof}

\section{Bounds for trace functions of gallant sheaves}\label{s: weakly gkr}
We start this section by introduction the results of \cite{FKMSbilinear}. In particular we explain what it means to be gallant and present their type I bound on trace functions, valid if the trace function is associated to a gallant sheaf.
Then we use the classification regiment for monodromy groups due to Katz as presented in Section 9 of \cite{FKMSbilinear} to show that our hypergeometric sums are associated to gallant sheaves. At the end of this section we treat some special cases by standard arguments. Because of Remark \ref{rem: the FE and its consequences}, in this section we exclude the cases $k=1$ and $a$ or $b=-1$.

\subsection{Being gallant}
We recall the definition of gallant from \cite{FKMSbilinear}, Definition 1.2. 

\begin{defi}[Gallant groups]
    Let $E$ be an algebraically closed field of characteristic 0, let $r\geq 0$ be an integer and let $G\subset GL_{r,E}$ be a linear algebraic subgroup of $GL_r$ over $E$. The group $G$ is said to be gallant if the action of $G$ on $E^r$ is irreducible and moreover one of the following conditions is satisfied: 
    \begin{enumerate}
        \item the identity component $G^0$ of $G$ is a simple algebraic group (in particular, the integer $r$ is at least 2) 
        \item or the group $G$ is finite and contains a quasisimple normal subgroup $N$ acting irreducibly on $E^r$. 
    \end{enumerate}
\end{defi}

\begin{defi}[Gallant sheaves]\label{def weakly grk}
    Let $k$ be a finite field and let $\ell$ be a prime number invertible in $k$. Let $\mathcal{F}$ be an $\ell$-adic sheaf on $\mathbb{A}_k^1$ with generic rank $r\geq 0$. We say $\mathcal{F}$ is gallant if the geometric monodromy group of $\mathcal{F}$ is gallant as a subgroup of $GL_r(\bar \QQ_{\ell})$. 

    We say that $\mathcal{F}$ is gallant and light if $\mathcal{F}$ is gallant and moreover mixed of integral weights $\leq 0$ and its restriction to some open dense subset is pure of weight 0.
\end{defi}

In \cite{FKMSbilinear} Fouvry, Kowalsi, Michel and Sawin state type I and type II bounds. Since in our case we are able to deduce type II bounds from type I bounds (see Proposition \ref{prop: bound with cut of fct}) we only state their type I estimate. 

\begin{thm}[\cite{FKMSbilinear}, Thm. 1.3]\label{thm weakly gkr from fkms}
    Let $q$ be a prime and $K(\bullet)$ be the trace function of an $\ell$-adic sheaf $\mathcal{F}$ which is gallant and light in the sense of Definition \ref{def weakly grk} for some prime $\ell\neq q$. Let $\bm\alpha =(\alpha_m)_{m\leq M}$ be a sequence of complex numbers.
Let $a,b$ be non-zero integers, $l\geq 2$ be an integer and $M, N \geq 1$ integers. 

    \item Suppose that \begin{equation*}
   1\leq M\leq q,\ 10q^{1/l}\leq N\leq q^{1/2+1/(2l)}.
\end{equation*} 
Then we have
\begin{equation*}
\sum_{m\sim M}\sum_{n\sim N}\alpha_mK(m^an^b)\ll q^{\varepsilon}\|\bm \alpha\|_2M^{1/2}N\left(\frac{q^{1+3/(2l)}}{MN^2}\right)^{1/(2l)}.
\end{equation*}
Here the implicit constant depends on $\varepsilon,a,b,l$ and on the complexity $c(\mathcal{F})$ of $\mathcal{F}$. 

\end{thm}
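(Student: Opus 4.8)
The plan is to follow the now-standard strategy for bilinear sums of trace functions developed in \cite{FKMSbilinear}. First one amplifies the sum by a van der Corput--type ``$+uv$-shift'' (their Proposition~4.1, the same device that drives the proof of Proposition~\ref{prop:ab for e_q}) together with completion of the $n$-variable via Poisson summation; a $2l$-fold H\"older inequality then produces, for a free parameter $V$ with $V/10\le N$ and $N^2/V\le q$, a bound of the form
\[
\sum_{m\sim M}\sum_{n\sim N}\alpha_m K(m^an^b)\ll q^{o(1)}\|\bm\alpha\|_2\,M^{1/2}N\left(\frac{1}{MN^2V^{2l-1}}\sum_{\mathbf v\in[V,2V]^{2l}}\bigl|\Sigma(\mathbf v;K)\bigr|\right)^{1/2l},
\]
where $\Sigma(\mathbf v;K)$ is a complete sum of the shape $\sum_{r\in\FF_q}\sum_{s\in\FF_q^\times}\prod_{i=1}^{l}K\bigl(s(r+v_i)^b\bigr)\overline{K\bigl(s(r+v_{l+i})^b\bigr)}$ over $(r,s)\in\FF_q\times\FF_q^\times$. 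Everything is thereby reduced to an averaged estimate for the $\Sigma(\mathbf v;K)$.

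Next I would dichotomise the shift vectors $\mathbf v$ into a ``degenerate'' set --- those for which a coincidence $v_i\equiv v_{l+h}\pmod q$ is forced --- and its complement. A counting argument in the spirit of Lemma~\ref{lem: Pierces trick} shows there are at most $O\!\bigl(V^l(1+V/q)^l\bigr)$ degenerate vectors, and on them one uses only the trivial bound $|\Sigma(\mathbf v;K)|\ll q^2$ coming from $\|K\|_\infty\ll1$. On the complement, the aim is a power saving $|\Sigma(\mathbf v;K)|\ll q^{2-\kappa}$ for some $\kappa=\kappa(l)>0$.

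The power saving for non-degenerate $\mathbf v$ is the heart of the matter and the main obstacle. One realises $\Sigma(\mathbf v;K)$ as the sum of the trace function of the tensor (``sum--product'') complex $\mathcal G_{\mathbf v}=\bigotimes_{i=1}^{l}[(r,s)\mapsto s(r+v_i)^b]^*\mathcal F\otimes\bigotimes_{i=1}^{l}[(r,s)\mapsto s(r+v_{l+i})^b]^*\mathcal F^{\vee}$, which is mixed of weight $\le0$, over $(r,s)\in\FF_q\times\FF_q^\times$. By Deligne's Riemann Hypothesis over finite fields, $\Sigma(\mathbf v;K)$ enjoys the required cancellation precisely when $\mathcal G_{\mathbf v}$ carries no geometrically trivial constituent. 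Here the gallant hypothesis is decisive: because the geometric monodromy group of $\mathcal F$ is gallant --- identity component simple, or finite quasisimple acting irreducibly --- each pullback $[x\mapsto s(x+v)^b]^*\mathcal F$ remains geometrically irreducible with ``the same'' monodromy group, and for distinct admissible parameters these monodromy groups sit in general position. One then invokes the Goursat--Kolchin--Ribet criterion, in the form assembled in Section~9 of \cite{FKMSbilinear} out of Katz's classification of monodromy groups \cite{Katzbook}, to conclude that $\mathcal G_{\mathbf v}$ acquires an invariant vector only when the multiset $\{v_1,\dots,v_l\}$ matches $\{v_{l+1},\dots,v_{2l}\}$ modulo $q$, i.e.\ only on the degenerate set already handled. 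This step is exactly what breaks down for rank-one sheaves (Artin--Schreier $e_q$, Kummer sheaves), which are not gallant and must be treated separately --- as in Section~\ref{S: Kl1}. Carrying it out rigorously requires: identifying the correct non-degeneracy locus; excluding the sporadic small-monodromy configurations thrown up by Katz's list; and tracking the conductor (complexity) of $\mathcal G_{\mathbf v}$, so that the implied constant depends only on $a,b,l$ and $c(\mathcal F)$.

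Finally I would collect the two contributions to $\sum_{\mathbf v}|\Sigma(\mathbf v;K)|$ --- namely $O(V^lq^2)$ from the degenerate vectors and $O(V^{2l}q^{2-\kappa})$ from the rest --- and balance them by taking $V=q^{1/l}$; the constraints $V/10\le N$ and $N^2/V\le q$ then read exactly $10q^{1/l}\le N$ and $N\le q^{1/2+1/(2l)}$, which is the hypothesis. Substituting back into the displayed inequality and simplifying the exponents gives
\[
\sum_{m\sim M}\sum_{n\sim N}\alpha_m K(m^an^b)\ll q^{\varepsilon}\|\bm\alpha\|_2M^{1/2}N\left(\frac{q^{1+3/(2l)}}{MN^2}\right)^{1/(2l)},
\]
with the implied constant depending only on $\varepsilon$, $a$, $b$, $l$ and $c(\mathcal F)$. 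The exponent $q^{1+3/(2l)}$ is weaker than the $q^{1+1/l}$ available for the Artin--Schreier sheaf in Proposition~\ref{prop:ab for e_q}; this reflects the coarser control over the bad locus and over the cohomology of $\mathcal G_{\mathbf v}$ for a general gallant sheaf. The analytic portions --- shifting, counting, balancing --- are routine, so the genuine difficulty is squarely the uniform power saving for $\Sigma(\mathbf v;K)$, which rests on the full strength of Katz's monodromy classification together with the Goursat--Kolchin--Ribet machinery, and it is there that ``gallant'' is both what is needed and what suffices.
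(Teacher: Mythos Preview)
The paper does not prove this statement: Theorem~\ref{thm weakly gkr from fkms} is quoted from \cite{FKMSbilinear}, Theorem~1.3, and used as a black box throughout Section~\ref{s: weakly gkr}. There is no in-paper argument for you to be compared against.

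That said, your outline does match the architecture one expects from the \cite{FKMSbilinear} method --- the $+uv$-shift (their Proposition~4.1), reduction to complete sums $\Sigma(\mathbf v;K)$ over $\FF_q\times\FF_q^\times$, and the gallant hypothesis entering through a Goursat--Kolchin--Ribet argument to exclude invariants in the correlation complex for non-degenerate shift vectors $\mathbf v$. One point, however, does not close as written. With a degenerate locus of size $O(V^l)$ carrying the trivial bound $q^2$ and the complement carrying $|\Sigma|\ll q^{2-\kappa}$, the choice $V=q^{1/l}$ gives inside the bracket
\[
\frac{1}{MN^2V^{2l-1}}\bigl(V^lq^2+V^{2l}q^{2-\kappa}\bigr)=\frac{q^{1+1/l}}{MN^2}+\frac{q^{2-\kappa+1/l}}{MN^2},
\]
and balancing the two forces $\kappa=1$, which recovers precisely the exponent $q^{1+1/l}$ of Proposition~\ref{prop:ab for e_q} rather than the claimed $q^{1+3/(2l)}$. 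The gallant bound is genuinely weaker by a factor $q^{1/(2l)}$ inside the bracket, and your sketch does not locate where this extra loss originates; the closing remark that it ``reflects the coarser control over the bad locus and over the cohomology'' gestures at the answer but does not supply it. To reach $q^{1+3/(2l)}$ one has to track more carefully either a larger bad $\mathbf v$-locus than mere diagonal matching, or a weaker-than-full-square-root estimate for the two-variable sum $\Sigma(\mathbf v;K)$, together with uniform complexity control for the $2l$-fold tensor complex --- and that bookkeeping is part of the substance of \cite{FKMSbilinear} rather than a routine endgame.
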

To apply this result we need to check if our hypergeometric sums are related to gallant sheaves. 

Some examples are sheaves whose connected component of the monodromy group $SO_n, n=2,3 \text{ or }n\geq 5$, $Sp_n, n\geq 2$, $SL_n, n\geq 2$, $\mathfrak{S}_n, n\geq 5$, $\rho_7(G_2), \rho_8(Spin_7)$ or $\mathrm{Ad}(SL_3)$, see Section 9 in \cite{FKMSbilinear}.    
The probably most important exception from gallant groups is $SO_4$. It is not simple, hence not gallant. For finite groups an important counterexample is the symmetric group of order $n$ for $n$ small. It is gallant only as soon as $n\geq 5$ (see \cite{FKMSbilinear}, Section 9).

\subsection{Katz classification }
In the work of Katz, \cite{Katzbook}, he classifies the monodromy groups of hypergeometric sums based on the character set attached to it.
By Section \ref{S: Hasse- Davenport} we have reduced the analysis of $\K_k^{a,b}$ to studying hypergeometric sums. For $\Kl_k$ the sum is already in the correct shape. We recall the definition of hypergeometric sums. For two disjoint sets of characters $\bm\chi$ and $\bm \theta$ we have

$$\Hyp(u,\bm \chi,\bm \theta,\psi)=\frac{1}{|F|^{\frac{r+t-1}2}}\sum_{\substack{x_1,\ldots,x_r,y_1,\ldots,y_t\\\frac{x_1\cdots x_r}{y_1\cdots y_t}=u}}\psi(x_1+\ldots+x_r-y_1-\ldots-y_t)\prod_{i=1}^r\chi_i(x_i)\prod_{i=1}^t\bar\theta_i(y_i).$$
In the following we denote $r=|\bm\chi|$, $t=|\bm\theta|$ and $n=\max (r,t)$.
Note that by Lemma \ref{l: hasse davenport} we know that $\bm \chi$ is the set of characters $\bm \rho[1]^k$ in the case of $\Kl_k$ united with $\bm \rho[a]$ and/or $\bm \rho[b]$ if $a$ and/or $b$ are positive for $\K_k^{a,b}$. Similarly the set $\bm \theta$ consists of $\bm \rho[a]$ and/or $\bm \rho[b]$ for $a$ and/or $b$ negative. By \cite{FKMSbilinear}, Remark 9.2 without loss of generality we assume $\bm \chi$ to be non-empty.

We distinguish two cases. The first case is if $\bm \theta=\emptyset$ and $r\geq 2$. In this case we are reduced to studying the generalized Kloosterman sums
$$\Kl(u,\bm \chi,\psi)=\frac{1}{|F|^{\frac{n-1}2}}\sum_{\substack{x_1,\ldots,x_n\\x_1\cdots x_n=u}}\psi(x_1+\ldots+x_n)\prod_{i=1}^n\chi_i(x_i).$$
 As usual, we assume $k>0$. Then the case of only one set of characters occurs for $a>0, b>0$, for $a<-1,b<-1,k=1$, for $a<0,b<0,k=2$, for $a>0,b<0, b|a$ where we require $b<-1$ if $k=1$ and of course for $\Kl_k$ with $k\geq 2$.
The case in which $\bm \chi$ and $\bm\theta$ are two non-empty disjoint sets of characters occurs for $a>0,b<0, b\nmid a$ and $a<0,b<0,k>2$.

\begin{defi}
The multiset $\bm \chi$ is Kummer induced if there exists a $d>1$ dividing $r=|\bm\chi|$ and a character $\eta$ of order $d$ such that $\bm \chi =\eta \bm\chi$ as multisets. 
\end{defi}

By \cite{Katzbook}, Remark 8.9.3 a pair of character sets is Kummer induced if both sets are individually Kummer induced of degree $d$. 
\begin{defi}
A pair of character sets $(\bm \chi, \bm \theta)$ is called
\begin{enumerate}
    \item Kummer induced if there exists $d>1$ of such that for every character $\eta$ of exact order $d$, both $\bm \chi$ and $\bm \theta$ as sets with multiplicity are stable under the operation $\xi\mapsto \eta \xi$.
    \item Belyi induced if $r=t=n$ and there exist $c,d\in \NN$ and characters $\alpha,\beta$ with $\beta\neq 1$ such that $c+d=n$ and 
 \begin{align*}
    &\bm \chi =\{\text{all $c$'s roots of }\alpha\}\cup \{\text{all $d$'s roots of }\beta\} \\
    &\bm \theta = \{\text{all $n$'s roots of }\alpha\beta\}. 
     \end{align*}
\end{enumerate}
\end{defi}

Let $\Lambda_{\bm\chi}=\prod_{\chi\in \bm\chi}\chi$ and $\Lambda_{\bm\theta}=\prod_{\theta\in \bm\theta}\theta.$ We also recall the following definitions motivated by the work of Katz.

\begin{defi}
    \begin{enumerate}
        \item  A multiset $\bm{\chi}$ is twist-selfdual with dualizing character $\xi$ if the multiset of characters $\bm{\chi}$ is stable under $\chi \mapsto \xi\chi^{-1}$ i.e.  $$\bm{\chi}=\xi\bm\chi^{-1}.$$ 
\item  A twist-selfdual multiset $\bm{\chi}$ is symmetric if either $n$ is odd or $n$ is even and $\Lambda_{\bm \chi}\neq \xi^{n/2}$. If $\bm \chi$ is not symmetric, it is called alternating.
    \end{enumerate}
\end{defi}

With this we introduce Katz's classification theorems. The following two theorems and proofs are summaries of Katz's work, drawn up by the authors of \cite{FKMSbilinear} known to the authors of this work by private communication. We denote by $G^{0,der}$ the derived subgroup of the connected component. 

\begin{thm}[Katz]\label{thm: Katz one set characters}
Assume that $n\geq 2$, that $q>2n+1$ and that $\bm{\chi}$ is not Kummer induced. 
We have
\begin{enumerate}
    \item  If $n$ is odd or if $n$ is even and $\bm{\chi}$ is not twist-selfdual then $G^0 = G^{0,der} = SL_n$.
\item  If $n$ is even and $\bm{\chi}$ is twist-selfdual and symmetric then $G^0 = G^{0,der} = SO_n$.
\item  If $n$ is even and $\bm{\chi}$ is twist-selfdual and alternating then $G^0 = G^{0,der} = Sp_n$.
\end{enumerate}
\end{thm}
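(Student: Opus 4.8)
The plan is to follow Katz's analysis of hypergeometric sheaves (\cite{Katzbook}, Chapter 8), in the streamlined form recalled in \cite{FKMSbilinear}, Section 9. Since $\bm{\theta}=\varnothing$ here, the relevant object is the Kloosterman-type hypergeometric sheaf $\mathcal{H}=\mathcal{H}(\bm{\chi},\varnothing)$, the $\ell$-adic incarnation of $\Kl(u,\bm{\chi},\psi)$: it is lisse of rank $n=|\bm{\chi}|\geq 2$ on $\mathbb{G}_m$, geometrically irreducible (as $\bm{\chi}$ and $\varnothing$ are disjoint), tame at $0$ with tame local monodromy prescribed by $\bm{\chi}$, and totally wild at $\infty$ with all breaks equal to $1/n$ and Swan conductor $1$. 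Write $G=G_{\mathrm{geom}}\subseteq GL(V)$ for the geometric monodromy group acting on the generic stalk $V\cong\overline{\QQ_\ell}^{\,n}$; by irreducibility it acts irreducibly. As a first reduction, $\det\mathcal{H}$ is geometrically a Kummer sheaf (essentially $\mathcal{L}_{\Lambda_{\bm{\chi}}}$), in particular of finite geometric order, so $\det(G^0)=1$ and $G^0\subseteq SL_n$; since $G^0$ acts irreducibly its centre consists of scalars, which are finite in $SL_n$, so $G^0$ is reductive with finite centre, hence semisimple, i.e.\ $G^0=G^{0,der}$. Everything therefore reduces to identifying this semisimple group.

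The second step is to show $G^0$ already acts irreducibly on $V$. If it did not, $V|_{G^0}$ would split into isotypic components transitively permuted by $G/G^0$, so $\mathcal{H}$ would be geometrically induced; Katz's study of induced hypergeometric sheaves (\cite{Katzbook}, \S 8.9; cf.\ \cite{FKMSbilinear}, \S 9) then forces $\mathcal{H}\cong[d]_*\mathcal{G}$ for a power map $[d]$ with $1<d\mid n$, which is exactly the statement that $\bm{\chi}$ is Kummer induced --- excluded by hypothesis. The same local input at $\infty$ (totally wild of Swan conductor $1$) shows in addition that the $I_\infty$-representation, hence $\mathcal{H}$, is tensor-indecomposable, so $G^0$ is not contained in a proper tensor-product subgroup $H_1\otimes H_2$ with each $H_i$ of rank $\geq 2$. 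At this point $G^0$ is a connected semisimple irreducible subgroup of $SL(V)$ that is neither tensor-decomposable nor induced.

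The heart of the matter is then Katz's classification of such subgroups, as presented in \cite{FKMSbilinear}, Section 9: $G^0$ is $SL_n$, $SO_n$, $Sp_n$, or one of finitely many exceptional small representations (such as $G_2\hookrightarrow GL_7$, $Spin_7\hookrightarrow GL_8$, the adjoint $PGL_3\hookrightarrow GL_8$, and so on). To eliminate the exceptional candidates I would use the very rigid local monodromy at $\infty$: its image in $GL_n$ is monomial, with wild inertia mapping to a cyclic $p$-group and the tame part generated by a regular element whose eigenvalues are the $n$ distinct $n$-th roots of a single scalar --- so $G^0$ must contain an element of this prescribed shape. For each exceptional candidate the dimension $n$ is small and fixed, and accommodating such an element is impossible unless $p$ lies in a short list of small primes; the hypothesis $q>2n+1$ rules all of these out, leaving $G^0\in\{SL_n,SO_n,Sp_n\}$. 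I expect this exclusion step to be the main obstacle: it is precisely where Katz's detailed hypergeometric machinery, and the numerical hypothesis on $q$, are indispensable.

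Finally, I would decide which of the three occurs by autoduality. The group $G^0$ fixes a non-degenerate bilinear form on $V$ exactly when $\mathcal{H}\cong\mathcal{H}^\vee$ geometrically, and whether such a form is symmetric or alternating distinguishes $SO_n$ from $Sp_n$. By Katz's computation of the autoduality of hypergeometric sheaves (\cite{Katzbook}, Chapter 8), $\mathcal{H}$ is geometrically self-dual precisely when $\bm{\chi}=\xi\,\bm{\chi}^{-1}$ for the dualizing character $\xi$ attached to $\mathcal{H}$ --- i.e.\ when $\bm{\chi}$ is twist-selfdual --- and in that case the invariant form is symmetric when $\Lambda_{\bm{\chi}}\neq\xi^{n/2}$ and alternating otherwise; for odd $n$ this criterion never lands in the self-dual case. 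Combining this with the classification of the previous step: when $n$ is odd, or $n$ is even but $\bm{\chi}$ is not twist-selfdual, $G^0$ preserves no form and $G^0=G^{0,der}=SL_n$; when $n$ is even and $\bm{\chi}$ is twist-selfdual and symmetric, $G^0\subseteq SO_n$, hence $=SO_n$; and in the alternating case $G^0\subseteq Sp_n$, hence $=Sp_n$. This covers all three cases.
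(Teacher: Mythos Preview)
Your proposal is correct and follows essentially the same route as the paper, which simply cites Katz's Thm.~8.11.3 and Prop.~8.11.5 together with the duality recognition criteria Thm.~8.8.1 and 8.8.2 in \cite{Katzbook}; you have just unpacked that citation into its constituent steps (semisimplicity of $G^0$, irreducibility of $G^0$ via the Kummer-induction criterion, the classification into $SL_n/SO_n/Sp_n$ plus exceptionals, and the duality test). One small remark: in the purely Kloosterman case $\bm{\theta}=\varnothing$ the list of exceptional candidates you mention ($G_2$, $Spin_7$, $\mathrm{Ad}(SL_3)$, \ldots) does not in fact arise --- those occur only when $|r-t|=6$ with $n\in\{7,8,9\}$, which is impossible for $(r,t)=(n,0)$ --- so the role of the hypothesis $q>2n+1$ is not to exclude these groups but rather to feed into Katz's primitive-prime argument in Thm.~8.11.3 that pins down $G^{0,der}$ among $SL_n$, $SO_n$, $Sp_n$.
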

\begin{proof}
This is \cite{Katzbook}, Thm. 8.11.3 and Prop. 8.11.5 (with d = N = n in the notations of loc.
cit) along with the duality recognition criteria \cite{Katzbook}, Thm. 8.8.1 and Thm. 8.8.2.
\end{proof}
Let $\chi_{1/2}$ be the unique character of order 2. 
For the case of two character sets we have the following classification.

\begin{thm}[Katz]\label{thm: Katz two set characters}
 Assume that $\bm\chi$ and $\bm\theta$ are disjoint, that $(\bm\chi,\bm\theta)$ is not Kummer induced, and
that, in case $r = t$, neither $(\bm\chi,\bm\theta)$ nor $(\bm\theta,\bm\chi)$ is Belyi induced. The following holds:
\begin{enumerate}
    \item  The group $G$ is reductive and $G^0 = G^{0,der}$.
\item  If $r - t \equiv 1 \mods{2}, \ G^0 = SL_n$.
\item  If $r - t \equiv 0 \mods 2$ with $r \neq t$ then $G^0$ is either $SL_n, SO_n$ or $Sp_n$ (if $n$ is even) 
unless $|r - t| = 6$ and $n = 7, 8, 9$ in which case $G^{0,der}$ could also be
\begin{itemize}
    \item  $n = 7: \ \rho_7(G_2)$
    \item $n = 8: \ \rho_8(Spin_7)$ or $\mathrm{Ad}(SL_3)$ or $\mathrm{st}^{\otimes 3}(SL_2 \times SL_2 \times SL_2)$.
\item  $n=9: \ \mathrm{st}^{\otimes 2}(SL_3\times SL_3).$
\end{itemize}
\item  If $r - t \equiv 0 \mods 2$ with $r\neq t$ and both $\bm\chi$ and $\bm\theta$ are twist-selfdual with the same
dualizing character then $G^0 \subset SO_n$ unless $n \equiv 0 \mods 2$ and $\Lambda_{\bm\chi}/\Lambda_{\bm\theta}=1$ in which
case $G^0 \subset Sp_n$ .
\item  If $r = t$, $G^0$ is either $\{1\}, SL_n, SO_n$ or $Sp_n$ and
\begin{itemize}
    \item  if $\Lambda_{\bm\chi}/\Lambda_{\bm\theta}=1$, $G^0 = SL_n$ or $Sp_n$ (if $n$ is even),
\item if $\Lambda_{\bm\chi}/\Lambda_{\bm\theta}=\chi_{1/2}$, $G^0 = \{1\}, SO_n$ or $Sp_n$.
\end{itemize}
\end{enumerate}
\end{thm}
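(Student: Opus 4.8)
The statement is a dictionary entry rather than a new theorem: it repackages several of Katz's results on the geometric monodromy of hypergeometric sheaves into one list. The plan is therefore to quote the relevant theorems of \cite{Katzbook}, Chapter 8, and to check that our hypotheses --- $\bm\chi$ and $\bm\theta$ disjoint, $(\bm\chi,\bm\theta)$ not Kummer induced, and in the balanced case neither $(\bm\chi,\bm\theta)$ nor $(\bm\theta,\bm\chi)$ Belyi induced --- are exactly the ones under which those theorems apply.

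First I would record the two structural facts that hold in every case. Since $\bm\chi$ and $\bm\theta$ are disjoint, $\mathcal H(\bm\chi,\bm\theta,\psi)$ is lisse of rank $n=\max(r,t)$ on $\mathbb G_m$ (away from $1$ when $r=t$), geometrically irreducible, and pure of weight $0$ on its locus of lisseness; geometric irreducibility forces its geometric monodromy group $G$ to be reductive. Moreover the geometric determinant of a hypergeometric sheaf is tame, hence a Kummer sheaf, hence of finite order, so $G^0\subset SL_n$; a connected reductive subgroup of $SL_n$ acting irreducibly has trivial central torus and is therefore semisimple, i.e.\ $G^0=G^{0,der}$. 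This is part (1), and it is the same mechanism as in the proof of Theorem \ref{thm: Katz one set characters}.

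Next I would split on the parity of $r-t$. By the autoduality recognition criteria for hypergeometric sheaves (\cite{Katzbook}, Thm. 8.8.1 and Thm. 8.8.2, used exactly as in the proof of Theorem \ref{thm: Katz one set characters}), a geometric self-duality of $\mathcal H(\bm\chi,\bm\theta,\psi)$ can exist only if both $\bm\chi$ and $\bm\theta$ are twist-selfdual for a common dualizing character, and this forces $r-t$ to be even. Hence for $r-t$ odd the standard representation of $G^0$ is not self-dual, and Katz's classification of the monodromy of hypergeometric sheaves that are neither Kummer nor Belyi induced leaves only $G^0=SL_n$: this is part (2). For $r-t$ even, the same classification gives that $G^{0,der}$ is classical --- $SL_n$, $SO_n$ or $Sp_n$ (the last only for $n$ even) --- the orthogonal-versus-symplectic alternative being the type of the autoduality, read off from the symmetric/alternating dichotomy of the twist-selfdual multisets, which is in turn governed by the parity of $n$ and the value of $\Lambda_{\bm\chi}/\Lambda_{\bm\theta}$; this yields (4), (5), and the classical part of (3). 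In the balanced case $r=t$ I would add that the sheaf may be geometrically rigid with finite monodromy, which is the option $G^0=\{1\}$ in (5), and that whether the infinite options are $SL_n$/$Sp_n$ or $SO_n$/$Sp_n$ is decided by whether $\Lambda_{\bm\chi}/\Lambda_{\bm\theta}$ is trivial or equal to $\chi_{1/2}$, since this pins down both the determinant and the duality type.

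The only place where a non-classical group appears is the exceptional list in part (3): here I would invoke Katz's classification of the exceptional hypergeometric sheaves, which shows that $G^{0,der}$ can be $\rho_7(G_2)$, $\rho_8(Spin_7)$, $\mathrm{Ad}(SL_3)$, $\mathrm{st}^{\otimes 3}(SL_2\times SL_2\times SL_2)$ or $\mathrm{st}^{\otimes 2}(SL_3\times SL_3)$, and only when the local monodromies at $0$ and $\infty$ --- whose wild part has dimension $|r-t|$ --- are compatible with one of these exceptional representations, which pins down $|r-t|=6$ and $n\in\{7,8,9\}$. I expect this exceptional analysis to be the main obstacle to writing a clean proof: one must cite Katz's results precisely enough to know the list is exhaustive and that our disjointness / not-Kummer / not-Belyi hypotheses genuinely exclude every degenerate possibility. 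Everything else is bookkeeping --- matching our data $\bm\chi$, $\bm\theta$, $r$, $t$, $n$ and the products $\Lambda_{\bm\chi}$, $\Lambda_{\bm\theta}$ to the hypotheses and conclusions of the cited theorems.
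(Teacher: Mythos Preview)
Your proposal is correct and takes essentially the same approach as the paper: both treat the theorem as a compilation of Katz's results and argue by citation to \cite{Katzbook}, with Thm.~8.11.3 handling (1)--(3), the duality recognition criteria Thm.~8.8.1 and 8.8.2 handling (4), and Thm.~8.11.2 with Cor.~8.11.2.1 handling the balanced case (5). You add more explanatory prose (why $G$ is reductive, why odd $r-t$ precludes self-duality, the mechanism behind the $\Lambda_{\bm\chi}/\Lambda_{\bm\theta}$ dichotomy), but the paper's proof is a bare list of the same citations, so there is no substantive difference.
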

\begin{proof}
(1) and (2) are contained in cases (1) and (2) of \cite{Katzbook}, Thm. 8.11.3; (3) is \cite{Katzbook}, Thm.
8.11.3 (3); (4) is a consequence of the duality recognition criteria \cite{Katzbook}, Thm. 8.8.1 and
Thm. 8.8.2: under our assumption if $\xi$ denote the common dualizing character, the twist
$\mathscr L_\xi\otimes \mathscr H(\bm \chi ,\bm \theta;\psi)=\mathscr H (\xi\bm \chi,\xi\bm\theta;\psi)$ is twist-selfdual with symmetric or alternating twist-selfduality. (5) is contained \cite{Katzbook}, Thm. 8.11.2 and Cor. 8.11.2.1.\end{proof}

By Section 9 in \cite{FKMSbilinear} we know which of these cases are gallant. 
First we need to check if our cases are neither Kummer nor Belyi-induced.

\begin{lem} In the case of $\bm \theta =\emptyset$ the corresponding character set of the hypergeometric sum associated to $\K_k^{a,b}$ is not Kummer induced. This concerns the following cases: 
 \begin{enumerate}
      \item for $a>0, b>0$, 
      \item for $a-1,b<-1,k=1$,
      \item for $a<0,b<0,k=2$
      \item for $a>0,b<0, b|a$ with $b<-1$ if $k=1$ 
  \end{enumerate} 
The character set associated to $\Kl_k$, $k>1$ is not Kummer induced either. 
\end{lem}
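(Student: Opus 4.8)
The plan is to recast ``Kummer induced'' in terms of the multiplicity function and then to exploit the subgroup structure behind the $\bm{\rho}[\cdot]$. Over the extension $F$ in which $X^{|a|}-1$ and $X^{|b|}-1$ split (as in Corollary \ref{cor: K_k is hypergeometric}), $\bm{\rho}[a]$ is the unique subgroup of the cyclic group $\widehat{F^{\times}}$ of order $|a|$, so $\bm{\rho}[a]\sqcap\bm{\rho}[b]=\bm{\rho}[g]$ with $g=\gcd(|a|,|b|)$, $\langle\bm{\rho}[a],\bm{\rho}[b]\rangle=\bm{\rho}[L]$ with $L=\operatorname{lcm}(|a|,|b|)$, and $\bm{\rho}[L]=\bm{\rho}[a]\times\bm{\rho}[b]$ when $g=1$. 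Writing $\mu(\chi)$ for the multiplicity of $\chi$ in a multiset $\bm{\chi}$, the multiset $\bm{\chi}$ is Kummer induced by a character $\eta$ of exact order $d>1$ precisely when $\mu(\eta\chi)=\mu(\chi)$ for all $\chi$; in particular each level set $\{\chi:\mu(\chi)=m\}$ is then invariant under multiplication by $\eta$. The observation I would use repeatedly is that if $\chi_0$ occurs in $\bm{\chi}$ with multiplicity strictly greater than that of every other character, then $\bm{\chi}$ is not Kummer induced, because $\mu(\eta\chi_0)=\mu(\chi_0)$ would force $\mu(\eta)=\mu(\chi_0)$ while $\eta\neq\chi_0$.

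This observation settles most of the claim at once. For $\Kl_k$ with $k>1$ we have $\bm{\chi}=\bm{\rho}[1]^k$, so $\mu(\chi_0)=k>0$. For case (1), $a,b>0$, we have $\bm{\chi}=\bm{\rho}[a]\sqcup\bm{\rho}[b]\sqcup\bm{\rho}[1]^k$, so $\mu(\chi_0)=k+2\geq 3$, whereas any other $\rho$ lies in at most both of $\bm{\rho}[a],\bm{\rho}[b]$, hence $\mu(\rho)\leq 2$. For case (4), $a>0>b$ with $b\mid a$, provided $k\geq 2$: after deleting $\bm{\chi}\sqcap\bm{\theta}$ (legitimate by \cite{Katzbook}, Theorem 8.4.10, as explained after Corollary \ref{cor: K_k is hypergeometric}) one gets $\bm{\chi}=\bm{\rho}[1]^k\sqcup(\bm{\rho}[a]\smallsetminus\bm{\rho}[b])$, so $\mu(\chi_0)=k\geq 2>1$.

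The genuinely remaining configurations are cases (2) and (3), and case (4) with $k=1$. Here I would first carry out the reduction: delete $\bm{\chi}\sqcap\bm{\theta}$, and, in cases (2) and (3) where this empties $\bm{\chi}$, invoke the inversion symmetry $\mathcal{H}(\varnothing,\bm{\theta};\psi)\cong[x\mapsto x^{-1}]^{*}\mathcal{H}(\bm{\theta},\varnothing;\bar\psi)$ (the computation in the proof of Lemma \ref{l: hasse davenport}; this is the normalisation of \cite{FKMSbilinear}, Remark 9.2) to replace the datum by a one-set one. A short bookkeeping then identifies the surviving character set: $(\bm{\rho}[a]\sqcup\bm{\rho}[b])\smallsetminus\{\chi_0\}$ in case (2); $(\bm{\rho}[a]\smallsetminus\{\chi_0\})\sqcup(\bm{\rho}[b]\smallsetminus\{\chi_0\})$ in case (3); and $\bm{\rho}[a]\smallsetminus(\bm{\rho}[b]\smallsetminus\{\chi_0\})$ in case (4) with $k=1$, where $\bm{\rho}[b]\subseteq\bm{\rho}[a]$ and $|b|\geq 2$. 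In all three, multiplicities are at most $2$, the locus $\{\mu=2\}$ is $\bm{\rho}[g]\smallsetminus\{\chi_0\}$ (and is empty exactly when $g=1$), and in case (4) with $k=1$ the relevant datum is instead the complement $\bm{\rho}[a]\smallsetminus\bm{\chi}=\bm{\rho}[b]\smallsetminus\{\chi_0\}$.

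Finally I would force a contradiction. Assume $\bm{\chi}$ is Kummer induced by $\eta$ of order $d>1$. If $g>1$, then $\bm{\rho}[g]\smallsetminus\{\chi_0\}$ is $\langle\eta\rangle$-invariant, while $\mu(\eta)=\mu(\chi_0)\leq 1$ (it is $1$ in case (2), $0$ in case (3)) shows $\eta\notin\bm{\rho}[g]$; but then $\eta\bm{\rho}[g]$ is a coset of $\bm{\rho}[g]$ disjoint from $\bm{\rho}[g]$, contradicting $\eta(\bm{\rho}[g]\smallsetminus\{\chi_0\})=\bm{\rho}[g]\smallsetminus\{\chi_0\}\subset\bm{\rho}[g]$; the same coset argument applied to $\bm{\rho}[b]\smallsetminus\{\chi_0\}$ inside $\bm{\rho}[a]$ (here $\eta\in\bm{\rho}[a]$ since $\chi_0\in\bm{\chi}\subset\bm{\rho}[a]$) handles case (4) with $k=1$. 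If $g=1$, then $\bm{\chi}$ has all multiplicities $1$, is contained in $\bm{\rho}[L]=\bm{\rho}[a]\times\bm{\rho}[b]$ and is $\eta$-invariant, so $\eta\in\bm{\rho}[L]$; writing $\eta=\eta_a\eta_b$ with $\eta_a\in\bm{\rho}[a]$, $\eta_b\in\bm{\rho}[b]$, for any $\chi\in\bm{\chi}\cap\bm{\rho}[a]$ with $\chi\neq\chi_0$ the product $\eta\chi=(\eta_a\chi)\eta_b$ has $\bm{\rho}[b]$-component $\eta_b$, so it lies in $\bm{\chi}$ only if $\eta_b=\chi_0$ or $\chi=\eta_a^{-1}$; since $|a|\geq 3$ supplies at least two such $\chi$ this forces $\eta_b=\chi_0$, and symmetrically $|b|\geq 3$ forces $\eta_a=\chi_0$, hence $\eta=\chi_0$, a contradiction. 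The handful of residual configurations, in which $|a|$ or $|b|$ equals $1$ or $2$ (so one of the two subgroups is trivial or of order $2$), are checked directly, once more by counting how few elements a nontrivial $\bm{\rho}[b]$-coset can meet inside $\bm{\rho}[a]\cup\bm{\rho}[b]$. The main obstacle is precisely this coprime case $g=1$: there no character has dominant multiplicity, so one must genuinely use the incompatibility of the coset decompositions of the two subgroups $\bm{\rho}[a],\bm{\rho}[b]$ of $\widehat{F^{\times}}$, and one must keep careful track, in each of cases (2), (3) and (4), of exactly which character set survives the deletion of $\bm{\chi}\sqcap\bm{\theta}$ and the inversion normalisation.
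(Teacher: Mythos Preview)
Your proof is correct and shares the paper's core idea: Kummer induction by $\eta$ forces each multiplicity level set of $\bm{\chi}$ to be $\langle\eta\rangle$-stable. For $\Kl_k$, case (1), and case (4) with $k\geq 2$ your argument is identical to the paper's (the trivial character has strictly dominant multiplicity). The differences are in the details of cases (2), (3), and (4) with $k=1$.

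For case (2) the paper is more economical: it observes that $d$ must divide $(|a|,|b|)$ and also $|\bm{\chi}|=|a|+|b|-1$, hence $d\mid 1$. Your coset/product arguments reach the same conclusion but with more work. For case (4) with $k=1$ the paper argues directly that only the trivial character stabilises $(\bm{\rho}[a]\smallsetminus\bm{\rho}[b])\sqcup\{1\}$; your passage to the complement $\bm{\rho}[b]\smallsetminus\{1\}$ inside $\bm{\rho}[a]$ amounts to the same computation.

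For case (3), however, the paper only uses that the multiplicity-$2$ locus $\bm{\rho}[(|a|,|b|)]\smallsetminus\{1\}$ must be $\eta$-stable; this forces $\eta=1$ when $\gcd(|a|,|b|)>1$, but is vacuous when $\gcd(|a|,|b|)=1$ (e.g.\ $(a,b)=(-2,-3)$), so strictly speaking the paper's proof leaves a gap there. Your product-decomposition argument $\bm{\rho}[L]\cong\bm{\rho}[a]\times\bm{\rho}[b]$ closes this gap, at the cost of having to check a few residual small cases by hand. In that respect your write-up is more complete than the paper's.
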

\begin{proof}
\begin{enumerate}
    \item 
   The corresponding character set is $$\bm\chi=\bm \rho[a]\sqcup\bm \rho[b]\sqcup\bm \rho[1]^k.$$ It contains the trivial character at least three times. No other character is contained three times in $\bm \chi$, hence it can not be Kummer induced. 

  \item   The corresponding character set is
    $$\bm\chi=\bm \rho[a]\sqcup\{\bm \rho[b]\smallsetminus\{1\}\}.$$
For $b<-1$ we know that $d$ has to divide $(a,b)$ for the character set to be possibly Kummer induced. But we also know that $d$ divides $|\bm \chi|=a+b-1$. Hence $d=1$ and $\bm\chi$ is not Kummer induced. 

  \item   The corresponding character set is
    $$\bm\chi=\{\bm \rho[a]\smallsetminus\{1\}\}\sqcup\{\bm \rho[b]\smallsetminus\{1\}\}.$$
 In this set the characters that occur twice are exactly $\bm {\chi'}= \bm \rho[(a,b)]\smallsetminus\{1\}$. Hence this set itself would need to satisfy $\bm {\chi'}=\eta \bm {\chi'}$. But this is only possible with $\eta$ being the trivial character. 

  \item   The corresponding character set is 
    $$\bm\chi=\{\bm \rho[a]\smallsetminus\bm \rho[b]\}\sqcup\bm \rho[1]^{k}.$$
   For $k>1$ we use the same argument as in the first case. If $k=1$ and $b<-1$, the only character that keeps $\bm \rho[a]\smallsetminus\bm \rho[b]\sqcup \{1\}$ stable is the trivial character.
\end{enumerate}
   For $\Kl_k$ with $k>1$ the associated character set is $\bm \rho[1]^k$, which is not Kummer induced. 
\end{proof}

\begin{rem}
    Note that the character set associated to $(a,-1,1)$ (which we excluded due to Remark \ref{rem: the FE and its consequences}) is Kummer induced. 
\end{rem}

\begin{lem}\label{l: Not Kummer or belyi}
 For $a>0,b<0, b\nmid a$ and $a<0,b<0,k>2$ the corresponding pairs of character sets of the hypergeometric sums associated to $\K_k^{a,b}$ are neither Kummer nor Belyi induced.
\end{lem}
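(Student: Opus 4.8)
The plan is to treat the two character-set configurations separately, in each case using the explicit description of the sets $\bm\chi$ and $\bm\theta$ coming from Lemma \ref{l: hasse davenport} together with the divisibility constraints imposed by Kummer and Belyi induction. Recall that a pair $(\bm\chi,\bm\theta)$ is Kummer induced of degree $d>1$ only if $d$ divides both $r=|\bm\chi|$ and $t=|\bm\theta|$, and both multisets are stable under twisting by every character of exact order $d$; in particular, a necessary condition is that each individual multiset is Kummer induced of the same degree $d$ (this is \cite{Katzbook}, Remark 8.9.3). Belyi induction requires $r=t=n$, so it is automatically excluded whenever $r\neq t$, and one only needs to rule it out in the case $r=t$.

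\emph{Case $a>0$, $b<0$, $b\nmid a$.} Here $\bm\chi=\bm\rho[a]\sqcup\bm\rho[1]^k$ and $\bm\theta=\bm\rho[b]$, after deleting the common intersection one may instead work with the reduced sets, but it is cleanest to argue with the sets before reduction and note that induction passes to the reduced pair. First I would rule out Kummer induction: if $(\bm\chi,\bm\theta)$ were Kummer induced of degree $d>1$, then $\bm\theta=\bm\rho[b]$ would be stable under twisting by any $\eta$ of order $d$, which forces $d\mid b$; likewise $\bm\chi$ stable under such twisting, combined with the fact that the trivial character occurs with multiplicity $k$ coming from $\bm\rho[1]^k$ while every nontrivial character of $\bm\rho[a]$ occurs with multiplicity one (and $1\in\bm\rho[a]$ too), forces the orbit of $\mathbf 1$ under twisting to have constant multiplicity; this gives $d\mid a$ after bookkeeping of multiplicities. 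Then $d\mid(a,b)$, but one also has $d\mid |\bm\chi|=a+k$ or an analogous relation, and—crucially—$b\nmid a$ is used to derive a contradiction with $d\mid(a,|b|)$ together with the relative sizes of $r$ and $t$. Belyi induction is excluded because $r=|\bm\rho[a]\sqcup\bm\rho[1]^k|=a+k$ while $t=|b|$, and for generic $k$ these differ; in the one potential coincidence $a+k=|b|$ one checks the Belyi shape $\bm\theta=\{\text{all }n\text{-th roots of }\alpha\beta\}$ cannot match $\bm\rho[b]$ unless $\alpha\beta=\mathbf 1$, which would force $\bm\chi$ to be a union of full root groups of $\alpha$ and $\bar\alpha$, incompatible with the multiplicity-$k$ copy of $\mathbf 1$.

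\emph{Case $a<0$, $b<0$, $k>2$.} Now $\bm\chi=\bm\rho[1]^k$ and $\bm\theta=\bm\rho[a]\sqcup\bm\rho[b]$. For Kummer induction of degree $d>1$: stability of $\bm\chi=\bm\rho[1]^k$ under twisting by a character $\eta$ of exact order $d$ would require $\eta\cdot\mathbf 1=\eta$ to lie in $\bm\chi$, but $\bm\chi$ consists only of the trivial character, so no such $\eta\neq\mathbf 1$ exists; hence $\bm\chi$ is not Kummer induced, and therefore neither is the pair. For Belyi induction one needs $r=t=n$, i.e. $k=|\bm\rho[a]\sqcup\bm\rho[b]|$; even if this holds, Belyi induction would demand $\bm\chi=\{\text{all }c\text{-th roots of }\alpha\}\cup\{\text{all }d\text{-th roots of }\beta\}$ with $\beta\neq\mathbf 1$, which cannot equal $\bm\rho[1]^k$ since the latter contains only $\mathbf 1$ while a full root group of a nontrivial $\beta$ of size $d\geq 1$ contains a nontrivial character once $d>1$, and if $d=1$ then $c=n$ and $\bm\chi$ would be a single full root group of $\alpha$, again forcing $\alpha=\mathbf 1$ and then $\bm\theta$ is the full group of $n$-th roots of $\mathbf 1$, contradicting that $\bm\theta=\bm\rho[a]\sqcup\bm\rho[b]$ has $\mathbf 1$ with multiplicity two but is not a single root group unless $a=b$, which one excludes by the hypothesis $b\nmid a$ is not in force here—so instead one uses $k>2$ to see $r=|\bm\chi|\neq t$ directly in all remaining subcases, killing Belyi.

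\emph{Main obstacle.} The delicate point is the Kummer analysis in the first case: one must carefully track multiplicities in $\bm\rho[a]\sqcup\bm\rho[1]^k$ versus $\bm\rho[b]$ and extract the divisibility $d\mid(a,|b|)$ rigorously, then combine it with the hypothesis $b\nmid a$ and the sizes $r=a+k$, $t=|b|$ to force $d=1$. I expect this to require distinguishing whether $d$ divides $k$ and handling the contribution of the trivial character (which appears in both $\bm\rho[a]$ and $\bm\rho[1]^k$) with care; the rest is routine once the induction definitions are unwound.
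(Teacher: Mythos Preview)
Your overall two-case structure matches the paper's, but there are genuine gaps in the execution, and your ``main obstacle'' is self-inflicted.

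First, you should work with the \emph{reduced} character sets (after deleting $\bm\chi\sqcap\bm\theta$), as the paper does: in Case 1 these are $\bm\chi=\{\bm\rho[a]\smallsetminus\bm\rho[(a,b)]\}\sqcup\bm\rho[1]^k$, $\bm\theta=\bm\rho[b]\smallsetminus\bm\rho[(a,b)]$, and in Case 2 they are $\bm\chi=\bm\rho[1]^{k-2}$, $\bm\theta=\{\bm\rho[a]\smallsetminus\{1\}\}\sqcup\{\bm\rho[b]\smallsetminus\{1\}\}$. The Kummer/Belyi questions are about the irreducible hypergeometric sheaf, which is attached to the disjoint pair; your remark that ``induction passes to the reduced pair'' is not a theorem you can invoke, and for Belyi (which requires $r=t$) the unreduced and reduced sizes differ.

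Second, your Kummer argument in Case 1 does not reach a contradiction. You propose to extract $d\mid(a,|b|)$ and then say ``$b\nmid a$ is used to derive a contradiction,'' but $d\mid\gcd(a,|b|)$ in no way contradicts $b\nmid a$ (take $d$ any common divisor smaller than $|b|$). In fact no divisibility gymnastics are needed: in the reduced $\bm\chi$ the trivial character has multiplicity $k\geq 1$ while every nontrivial character has multiplicity at most $1$, so twisting by any $\eta\neq\mathbf 1$ sends $\mathbf 1$ to a character of strictly smaller multiplicity. This is the paper's one-line argument, and it does not use $b\nmid a$ at all (that hypothesis is only there to ensure $\bm\theta$ is nonempty).

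Third, your Belyi analyses are too sketchy. You must treat both orderings $(\bm\chi,\bm\theta)$ and $(\bm\theta,\bm\chi)$. In Case 1 the paper observes that since $\beta\neq\mathbf 1$, the trivial character can appear at most once in the ``root-group union'' side, forcing $k=1$ and $b=-a-1$, then pushes this down to the single configuration $(\{1\},\{\chi_{1/2}\})$ and checks it by hand. Your sentence ``incompatible with the multiplicity-$k$ copy of $\mathbf 1$'' gestures at this but is not an argument. In Case 2 your final sentence is simply wrong: ``one uses $k>2$ to see $r=|\bm\chi|\neq t$ directly'' fails because $r=k-2$ and $t=|a|+|b|-2$ can certainly coincide (e.g.\ $k=|a|+|b|$). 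The paper instead argues, as above, that the trivial-only structure of $\bm\chi=\bm\rho[1]^{k-2}$ forces $k=3$ and $|a|+|b|=3$, reducing again to a single explicit pair.
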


\begin{proof}
    In the first case our sets of characters are 
    $$\bm\chi=\{\bm \rho[a]\smallsetminus\bm \rho[(a,b)]\}\sqcup\bm \rho[1]^{k} \text{ and }\bm \theta=\bm \rho[b]\smallsetminus\bm \rho[(a,b)].$$
    The set $\bm \chi$ and hence the pair $(\bm \chi,\bm \theta)$ is not Kummer induced. 
    Suppose $(\bm \chi,\bm \theta)$ is Belyi induced. Since $\beta\neq 1$ the trivial character can only appear once in $\bm \chi$ as a root of $\alpha=1$. Hence necessarily $k=1$ and $b=-a-1$. Our character sets in this case are $(\bm \rho[a],\bm \rho[b]\smallsetminus\{1\})$. By definition every element $\theta\in \bm \theta =\bm \rho[b]\smallsetminus\{1\}$ is a $b-1$-th root of unity. Hence 
    $$\beta= \theta^{b-1}=\theta^{-1} \text{ for all }\theta \in \bm \rho[b]\smallsetminus \{1\}.$$
This forces $b=-2$. Hence our character sets are $(\{1\},\{\chi_{1/2}\})$ but this is not Belyi induced. The tuple $(\bm\theta,\bm\chi)$ is not Belyi induced because for $\beta\neq 1$ the $n-th$ roots of $\alpha\beta$ do not include the trivial character. 
    
    In the second case our sets of characters are 
    $$\bm\chi=\bm \rho[1]^{k-2} \text{ and }\bm \theta=\{\bm \rho[b]\smallsetminus\{1\}\}\sqcup\{\bm \rho[a]\smallsetminus\{1\}\}.$$ For $k>2$ it is clear that $\bm \chi$ is not Kummer induced. Suppose $(\bm \chi,\bm \theta)$ is Belyi induced. Similarly to the first case it is necessary that $k=3$ and $a+b=-3$. Our character sets are then $(\{1\},\{\chi_{1/2}\})$ but this is not Belyi induced.  
    The tuple $(\bm\theta,\bm\chi)$ is not Belyi induced because since $\beta\neq 1$ the $n-th$ roots of $\alpha\beta$ do not include the trivial character.
\end{proof}

\begin{lem} \label{lem: gallant unles...}
    The sheaf $\mathcal{F}$ associated to the trace function $\Hyp(u,\bm\chi,\bm \theta,\psi)$ associated to the triple $(a,b,k)$ is gallant, unless \begin{itemize}
        \item $(|\bm \chi|,|\bm \theta |)=(r,t)=(8,2)$ or $(9,3)$ (potentially $ \mathrm{st}^{\otimes3}(SL_2\times SL_2\times SL_2)$ or $ \mathrm{st}^{\otimes2}(SL_3\times SL_3)$)
        \item $n=\max(r,t)=4$ (potentially $SO_4$)
        \item $n=1$ (rank 1)
        \item $(a,b,k)=(1,-2,1),(2,-3,1)$ or $(3,-4,1)$ (solvable groups $\mathfrak{S}_2,\mathfrak{S}_3, \mathfrak{S_4}$).
    \end{itemize}
\end{lem}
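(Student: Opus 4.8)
The plan is to run Katz's classification of hypergeometric monodromy groups against the list of gallant linear groups recalled above, and then translate the finitely many non-gallant outcomes back into conditions on $(a,b,k)$. By Corollary~\ref{cor: K_k is hypergeometric} together with the semisimplification lemma (\cite{Katzbook}, Thm.~8.4.10), the weight-$0$ part of $\mathcal{F}$ is the pure part of a hypergeometric sheaf $\mathcal{H}(\bm\chi,\bm\theta,\psi)$ whose character sets are the intersection-reduced versions of those tabulated in Lemma~\ref{l: hasse davenport}; by the Kummer-induction lemma and Lemma~\ref{l: Not Kummer or belyi}, the pair $(\bm\chi,\bm\theta)$ is neither Kummer nor Belyi induced (here one uses the running exclusion of $k=1$ together with $a$ or $b$ equal to $-1$). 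Writing $r=|\bm\chi|$, $t=|\bm\theta|$ and $n=\max(r,t)$, Theorems~\ref{thm: Katz one set characters} and~\ref{thm: Katz two set characters} then give that $G^{0,der}$ is one of $SL_n$, $SO_n$, $Sp_n$, one of $\rho_7(G_2),\rho_8(Spin_7),\mathrm{Ad}(SL_3),\mathrm{st}^{\otimes 3}(SL_2\times SL_2\times SL_2),\mathrm{st}^{\otimes 2}(SL_3\times SL_3)$, or, only when $r=t$, possibly $G^0=\{1\}$.

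First I would dispose of the positive-dimensional outputs. For $n\geq 2$ the groups $SL_n$ and $Sp_n$ are simple, and $SO_n$ is simple for $n=3$ and $n\geq 5$; of the exceptional groups, $\rho_7(G_2)$, $\rho_8(Spin_7)$ and $\mathrm{Ad}(SL_3)$ also have simple identity component, whereas $\mathrm{st}^{\otimes 3}(SL_2\times SL_2\times SL_2)$ and $\mathrm{st}^{\otimes 2}(SL_3\times SL_3)$ are not simple but, by Theorem~\ref{thm: Katz two set characters}(3), occur only when $|r-t|=6$ and $n\in\{8,9\}$; inspecting the table, the only such configurations that can carry these tensor-induced groups have $(|\bm\chi|,|\bm\theta|)\in\{(8,2),(9,3)\}$, which is the first bullet. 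Thus among the positive-dimensional outputs, $\mathcal{F}$ can fail to be gallant only in rank $n\in\{1,2,4\}$; $n=4$ is the second bullet and $n=1$ the third. It remains to exclude $SO_2$ at $n=2$: a direct check of the finitely many rank-$2$ entries of the table shows that in each case $\bm\chi$ (respectively $(\bm\chi,\bm\theta)$) is not twist-selfdual of symmetric type --- equivalently $\Lambda_{\bm\chi}=\xi^{n/2}$ for the dualizing character $\xi$ --- so that whenever $G^{0,der}$ is positive-dimensional it equals $Sp_2=SL_2$.

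Next comes the finite-monodromy case $G^0=\{1\}$. The key observation is that the local monodromy of $\mathcal{H}(\bm\chi,\bm\theta,\psi)$ at $\infty$ (respectively at $0$) carries a unipotent part whose Jordan type records the multiplicities in $\bm\chi$ (respectively $\bm\theta$), so $G^0=\{1\}$ forces both $\bm\chi$ and $\bm\theta$ to be multiplicity-free. In every family of Section~\ref{s: weakly gkr} one of the two character sets contains (the intersection-reduced remnant of) the block $\bm\rho[1]^{k}$, so multiplicity-freeness forces $k=1$, and together with the constraint $r=t$ from Theorem~\ref{thm: Katz two set characters}(5) the table then leaves only $(a,b,k)=(a,-(a+1),1)$ with $a\geq 1$, for which $\bm\chi=\bm\rho[a]$ and $\bm\theta=\bm\rho[a+1]\smallsetminus\{1\}$. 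There the local monodromies at $0$, $1$ and $\infty$ are (pseudo)reflections, and Katz's classification of hypergeometric sheaves of finite monodromy identifies $G$ with the symmetric group $\mathfrak{S}_{a+1}$ acting through its $a$-dimensional deleted permutation representation. This group is gallant as soon as $a+1\geq 5$ (the normal subgroup $A_{a+1}$ is quasisimple and acts irreducibly), while for $a+1\in\{2,3,4\}$ --- that is, $(a,b,k)\in\{(1,-2,1),(2,-3,1),(3,-4,1)\}$ --- the group $\mathfrak{S}_{a+1}$ is solvable and not gallant. This is the fourth bullet (consistently, $a=1$ also falls under $n=1$ and $a=4$ under $n=4$), and the proof is complete.

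I expect the finite-monodromy step to be the real obstacle: Theorems~\ref{thm: Katz one set characters} and~\ref{thm: Katz two set characters} only tell us that $G^0=\{1\}$ is possible when $r=t$, so identifying $G$ with $\mathfrak{S}_{a+1}$ --- and checking that no other configuration yields finite monodromy --- requires the finer reflection-group/finite-monodromy part of Katz's theory of hypergeometric sheaves rather than the coarse classification used for the other cases. The remaining difficulties are purely bookkeeping: the exhaustive inspection of the rank-$2$ entries needed to exclude $SO_2$, and the matching of the $|r-t|=6$, $n\in\{8,9\}$ configurations against the table; both are tedious but routine.
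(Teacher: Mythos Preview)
Your proposal is correct and follows the same approach as the paper: the paper's own proof is a one-line citation to \cite{FKMSbilinear}, Theorems 9.1--9.3 and Section 9.7, and what you have written is precisely an unpacking of that citation, running Katz's classification (Theorems \ref{thm: Katz one set characters} and \ref{thm: Katz two set characters}) against the list of gallant groups and isolating the finitely many non-gallant outputs. Your treatment is in fact more explicit than the paper's---in particular your handling of $SO_2$ at $n=2$ and your identification of the finite-monodromy family with $\mathfrak{S}_{a+1}$ spell out what the citation absorbs---though the claim ``multiplicity-freeness forces $k=1$'' is slightly loose (for the $a,b<0$, $k>2$ family the block is $\bm\rho[1]^{k-2}$, and the residual cases with $k=3$ land in $n=1$), but the conclusion is unaffected.
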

\begin{proof}
This follows from \cite{FKMSbilinear}, Theorem 9.1, 9.2 and 9.3 and their Section 9.7. 
\end{proof}

\subsection{Special cases}

Lemma \ref{lem: gallant unles...} leaves us only having to check a few special cases. We start with the cases $r=8$ or $r=9$ and then introduce a trick to handle the cases $r=4$, finite groups and some rank 1 cases.  

In the following we continue to denote by $\chi_{1/2}$ the unique character of order 2. We furthermore denote by $\chi_{1/3}$ and $\chi_{2/3}$ the two characters of order 3, by $\chi_{1/4},\chi_{3/4}$ the two characters of order 4 and analogously for higher orders.

\begin{lem}
    For $(r,t)=(8,2)$ or $(9,3)$ the monodromy groups $\mathrm{st}^{\otimes3}(SL_2\times SL_2\times SL_2)$ and $ \mathrm{st}^{\otimes2}(SL_3\times SL_3)$ do not occur.
\end{lem}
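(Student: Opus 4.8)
The plan is to use the tame local monodromy at $0$ of the hypergeometric sheaf attached to $\K_k^{a,b}$ to produce a unipotent element of the geometric monodromy group whose Jordan type cannot occur inside $\mathrm{st}^{\otimes 3}(SL_2\times SL_2\times SL_2)$ or inside $\mathrm{st}^{\otimes 2}(SL_3\times SL_3)$.

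First I would record the shape of $\bm\chi$. We are in the two-character-set case, which as discussed above occurs precisely when $a>0>b$ with $b\nmid a$, or when $a,b<0$ with $k>2$; by Lemma \ref{l: Not Kummer or belyi} one then has $\bm\chi=(\bm\rho[a]\smallsetminus\bm\rho[(a,b)])\sqcup\bm\rho[1]^k$ in the first case and $\bm\chi=\bm\rho[1]^{k-2}$ in the second. Recall that $k\geq 2$ in this section. Since $\bm\rho[a]$ is a squarefree multiset and the trivial character belongs to $\bm\rho[(a,b)]$, in the first case the trivial character occurs in $\bm\chi$ with multiplicity exactly $k$ and every other character with multiplicity $1$; in the second case $\bm\chi$ consists of the trivial character with multiplicity $k-2=r$. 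In both cases $r=|\bm\chi|>|\bm\theta|=t$.

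Next I would invoke the local structure of $\mathcal H(\bm\chi,\bm\theta)$ for $\bm\chi,\bm\theta$ disjoint with $r>t$ (\cite{Katzbook}, Chapter 8): it is lisse of rank $r$ on $\mathbb G_m$ and tamely ramified at $0$, where a topological generator $\gamma_0$ of the tame inertia group acts, for each character $\chi$ occurring in $\bm\chi$ with multiplicity $m_\chi$, through a single Jordan block of size $m_\chi$ and eigenvalue $\chi(\gamma_0)$. In particular $\gamma_0$ lies in the geometric monodromy group $G$, hence so does its unipotent part, which moreover lies in $G^0$; by the previous paragraph this unipotent part is nontrivial and has exactly one Jordan block of size $\geq 2$, of size $k$ in the first case and of size $r$ in the second.

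Finally I would derive a contradiction. Set $H=\mathrm{st}^{\otimes 3}(SL_2\times SL_2\times SL_2)\subset GL_8$ if $(r,t)=(8,2)$ and $H=\mathrm{st}^{\otimes 2}(SL_3\times SL_3)\subset GL_9$ if $(r,t)=(9,3)$, and suppose for contradiction that $G^{0,der}=H$. Then Theorem \ref{thm: Katz two set characters}(1) gives $G^0=G^{0,der}=H$, so the unipotent part of $\gamma_0$ lies in $H$. But on the defining module a nontrivial unipotent element of $H$ is a tensor product $u_1\otimes u_2\otimes u_3$ (resp. $u_1\otimes u_2$) of unipotent Jordan blocks of $SL_2$ (resp. $SL_3$), not all equal to the identity, and applying $J_m\otimes J_n\cong\bigoplus_{i=0}^{\min(m,n)-1}J_{m+n-1-2i}$ one checks in each case that such a tensor product has at least two Jordan blocks of size $\geq 2$. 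This contradicts the previous paragraph, so $H$ cannot occur. The only genuinely delicate point is bookkeeping: one must check that, in the normalisation of $\mathcal H(\bm\chi,\bm\theta)$ used here, it is the point $0$ (and not $\infty$) that carries the tame monodromy dictated by $\bm\chi$, and that passing to the reduced character sets $\bm\chi\smallsetminus\bm\chi\sqcap\bm\theta$, $\bm\theta\smallsetminus\bm\chi\sqcap\bm\theta$ leaves the multiplicity of the trivial character in $\bm\chi$ unchanged; the Jordan-block computation itself is short and direct.
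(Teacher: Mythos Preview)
Your approach via the tame local monodromy at $0$ is elegant and genuinely different from the paper's: the paper instead invokes Katz's explicit description (\cite{Katzbook}, Theorems~10.8.1 and~10.9.1) of which character pairs $(\bm\chi,\bm\theta)$ can produce the exceptional groups, enumerates all admissible $(a,b,k)$, and checks case by case that the required shape is never attained (even after twisting).

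There is, however, a genuine gap. The assertion ``Recall that $k\geq 2$ in this section'' is false: the restriction at the start of Section~\ref{s: weakly gkr} excludes only the conjunction $k=1$ \emph{and} $\{-1\}\cap\{a,b\}\neq\varnothing$, not all $k=1$. The tuples $(a,b,k)=(8,-3,1)$ and $(9,-4,1)$ are in scope (they appear in the paper's table) and give precisely $(r,t)=(8,2)$ and $(9,3)$. For these, $(a,b)=1$ and $\bm\chi=(\bm\rho[a]\smallsetminus\{1\})\sqcup\{1\}=\bm\rho[a]$ is multiplicity-free, so $\gamma_0$ acts semisimply and its unipotent part is the identity; your Jordan-block obstruction is then vacuous. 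A second, related issue is that the paper's lemma (as its proof and table make clear) also covers the inverted configurations with $r<t$, e.g.\ $(a,b,k)=(1,-9,2)$ has $(r,t)=(2,8)$. There the large character set is $\bm\theta=\bm\rho[b]\smallsetminus\bm\rho[(a,b)]$, again multiplicity-free in Case~1, so the tame monodromy at the other cusp is likewise semisimple and the same obstruction fails.

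Where your argument does apply --- the Case~2 configurations and the Case~1 configurations with $k\geq 2$ and $r>t$ --- it is correct and considerably slicker than the paper's exhaustive check. But the residual cases above require a different idea; for those, one is essentially forced back to analysing the eigenvalue pattern of the (semisimple) tame generator against the possible spectra in $\mathrm{st}^{\otimes 3}(SL_2^3)$ or $\mathrm{st}^{\otimes 2}(SL_3^2)$, which is exactly what Katz's Theorems~10.8.1 and~10.9.1 encode and what the paper uses.
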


\begin{proof}
This is an exhaustive list of possible cases for $(r,t)=(8,2)$ or $(r,t)=(9,3)$.
\begin{center}
    \begin{minipage}{0.75\textwidth}
 \vspace{1pt}
\begin{tabular}{cc}
cases $(r,t)=(8,2)$ & cases $(r,t)=(9,3)$\vspace{5pt} \\
 \begin{tabular}{cc}
\begin{tabular}{c|c|c}    
    $a$&$b$&$k$ \\
    \hline
        1 & -3 & 8 \\
        2 & -3 & 7 \\
        4 & -3 & 5 \\
        5 & -3 & 4 \\
        7 & -3 & 2 \\
        8 & -3 & 1 \\
        2 & -4 & 8 \\
        6 & -4 & 4 \\
        1 & -9 & 2 \\
        2 & -9 & 1 \\
        2 & -10 & 2 \\
        4 & -12 & 2 \\
        8 & -16 & 2 \\
    \end{tabular} \vspace{10pt}&
     \begin{tabular}{c|c|c}    
    $a$&$b$&$k$ \\
    \hline
        -1 & -9 & 4 \\
        -2 & -8 & 4 \\
        -3 & -7 & 4 \\
        -4 & -6 & 4 \\
        -5 & -5 & 4 \\
        -1 & -3 & 10 \\
        -2 & -2 &10\vspace{84pt} \\
  \end{tabular} \\
     \end{tabular} &
 \begin{tabular}{cc}
\begin{tabular}{c|c|c}    
    $a$&$b$&$k$ \\
    \hline
         1 & -4 & 9 \\ 
        3 & -4 & 7 \\
        5 & -4 & 5 \\
        7 & -4 & 3 \\
        9 & -4 & 1 \\
        3 & -6 & 9 \\
        9 & -6 & 3 \\
        1 & -10 & 3 \\
        3 & -10 & 1 \\
        3 & -12 & 3 \\
        9 & -18 & 3\vspace{37pt}\\
    \end{tabular}&
     \begin{tabular}{c|c|c}    
    $a$&$b$&$k$ \\
    \hline
       -1 & -10 & 5 \\
        -2 & -9 & 5 \\
        -3 & -8 & 5 \\
        -4 & -7 & 5 \\
        -5 & -6 & 5 \\
        -1 & -4 & 11 \\
        -2 & -3 & 11\vspace{93pt} \\
  \end{tabular} \\
     \end{tabular} \\
\end{tabular}    
\end{minipage}
\end{center}

    For $n=8$ we check that the the group $\mathrm{st}^{\otimes 3}(SL_2\times SL_2\times SL_2)$ does not occur. Using \cite{Katzbook}, Theorem 10.8.1 we obtain that for this group the two sets of characters have to be a twist of sets of characters of the following shape: 
Let $\xi$ be a character not of exact order four.
Assume wlog that $|\bm\chi|=8$. Then we need that 
$$\bm{\chi}=\{\xi,\bar\xi\}\cup \{\text{all cube roots of }\xi,\bar\xi\}\text{ and } \bm\theta=\{\chi_{1/4},\chi_{3/4}\} \text { or }\bm\theta=\{\chi_{1/3},\chi_{2/3}\}.$$
The condition $\bm\theta=\{\chi_{1/4},\chi_{3/4}\}$ on $\bm \theta$ gives $b=-4$. 
With twists the only other possibility is $a=2,b=-9,k=1$ with a twist by $\chi_{1/4}$. In this case the set $\chi_{1/4}\bm\chi$ does not have the required shape. 
For $b=-4$ the sets do not have the required shape either. (Note that for $a=6, k=4$ the character $\chi_{1/2}$ is not in $\bm{\chi}$.)
The condition $\bm\theta=\{\chi_{1/3},\chi_{2/3}\}$ gives $b=-3$. There is no possibility for a twist. In all cases the set $\bm\chi$ does not have the required shape since it contains the trivial character at least once. This fixes $\xi$ to be 1, giving the set $\{1,1,1,1,\chi_{1/3},\chi_{1/3},\chi_{2/3},\chi_{2/3}\}$  which is not possible.

For $n=9$ the we check for the group $\mathrm{st}^{\otimes2}(SL_3\times SL_3)$. From \cite{Katzbook}, Theorem 10.9.1 we obtain that for this group the two sets of characters have to be a twist of sets of characters of the following shape: 
Let $\xi_1,\xi_2,\xi_3$ be characters with orders not dividing three such that $\xi_1\xi_2\xi_3=1$.
Let $\chi_{1/3}$ and $\chi_{2/3}$ be the two characters of exact order three. Assume wlog that $|\bm\chi|=9$. Then we need that 
$$\bm{\chi}=\{\xi_1,\xi_2,\xi_3\}\cup \{\text{both square roots of }\bar \xi_1,\bar \xi_2\text{ and }\bar \xi_3\}\text{ and } \bm\theta=\{1,\chi_{1/3},\chi_{2/3}\}.$$
The condition on $\bm \theta$ excludes all cases except $a=3,k=1,b=-10$. But $\bm \rho[10]\smallsetminus\{1\}$ includes $\chi_{1/2}=\bar\chi_{1/2}$ which has square roots not included in $\bm \rho[10]$ and is a square root only of $1$ (which has an order dividing 3). Hence it is not possible to write the characters in the required shape. Twisting $\bm{\theta}$ by $\chi_{1/6}$ allows the options $a=3,k=9,b=-6$ and $a=9,k=3,b=-6$. In the first case $\bm{\chi}$ consists only of trivial characters and in the second case there are 3 trivial and 6 non-trivial characters. Neither of these are possible. 

\end{proof}

For the remaining cases we will apply Lemma \ref{l:poissontrick}. This changes the trace function from $\K_k^{a,b}$ to $\K_{k-1}^{a,1}$ or $\K_{k-1}^{1,b}$ and in most of our cases the new trace function is then associated to a gallant sheaf. The only tuples for which this does not hold are the two cases $(-1,-1,2)$ and $(-1,-2,2)$. These are already treated explicitly in Corollary \ref{cor: conclusion chap 4}.

\begin{lem}\label{lem: special cases}
Assume that $(a,b,k)$ is such that 
\begin{align*}
    n&=\max (r,t)=  4\text{  or}\\
    (a,b,k)&\in \{(1,-2,1), (2,-3,1),(-4,3,1),(-1,-1,3),(-1,-2,3),(a>1,-a,1)\}.
\end{align*}
Then for $a\neq -1$ or $k>1$ the function $u\mapsto \K_{k-1}^{a,1}(u^{-1})$ is the trace function of a gallant sheaf. 

\end{lem}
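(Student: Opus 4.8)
The plan is to run the reduction that this section has been setting up. The hypothesis $a\neq -1$ or $k>1$ places us in the scope of Lemma~\ref{l:poissontrick}, and over a finite extension $F_n/\FF_q$ in which $X^a-1$ splits, Corollary~\ref{cor: K_k is hypergeometric} identifies $\mathcal{K}_{k-1}^{a,1}$ geometrically with a multiplicative translate of a hypergeometric sheaf. Deleting the common characters (\cite{Katzbook}, Thm.~8.4.10), its weight-$0$ part carries the character data $(\bm{\rho}[a]\sqcup\bm{\rho}[1]^k,\varnothing)$ when $a>0$, and $(\bm{\rho}[1]^{k-1},\bm{\rho}[|a|]\smallsetminus\{1\})$ when $a<0$ --- the latter collapsing to the Kloosterman datum $(\bm{\rho}[|a|]\smallsetminus\{1\},\varnothing)$ when $k=1$ (via \cite{FKMSbilinear}, Rem.~9.2), and to $\Kl_{k-1}$ when $a=-1$. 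Since geometric isomorphism preserves the geometric monodromy group, it suffices to compute this group for each of the finitely many tuples in the statement and verify it is gallant.

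Next one checks that the character data above is never Kummer induced and, in the two-set case, that neither the pair nor its transpose is Belyi induced, so that Katz's classification Theorems~\ref{thm: Katz one set characters} and \ref{thm: Katz two set characters} apply (legitimately, since $q$ is large). For $\bm{\rho}[a]\sqcup\bm{\rho}[1]^k$ this is immediate: the trivial character is the unique one of maximal multiplicity $k+1$, so any Kummer-inducing twist fixes it and is therefore trivial. For $\bm{\rho}[|a|]\smallsetminus\{1\}$ and $\bm{\rho}[1]^{k-1}$, a Kummer or Belyi twist of order $d>1$ must divide $|a|-1$, hence be coprime to $|a|$, which already precludes stability; the only residual shapes admitting a Belyi structure have rank $\leq 1$ and are exactly $(a,b,k)\in\{(-1,-1,2),(-1,-2,2)\}$, which are excluded from the statement and are handled in Corollary~\ref{cor: conclusion chap 4}.

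Then I would read off the monodromy group. In the single-set case $\mathcal{H}(\bm{\rho}[a]\sqcup\bm{\rho}[1]^k,\varnothing)$ of rank $n=a+k$, the group is $SL_n$ when $n$ is odd; when $n$ is even the datum is twist-selfdual with dualizing character $1$, and the duality is alternating exactly when $\Lambda_{\bm{\rho}[a]}=1$, i.e.\ when $a$ is odd (recall $\Lambda_{\bm{\rho}[m]}$ is the order-$2$ character for $m$ even and trivial for $m$ odd), so the group is $Sp_n$ for $a$ odd and $SO_n$ for $a$ even. In the two-set case $r-t=k-|a|$ yields $SL_n$ when this is odd, and otherwise one of $SL_n,SO_n,Sp_n$ after the duality bookkeeping in parts (4) and (5) of Theorem~\ref{thm: Katz two set characters}; a short bound shows that the reduced sheaves arising from tuples with $\max(r,t)=4$ for $\mathcal{K}_k^{a,b}$ have rank at most $7$, so the only exotic possibility in part (3) of Theorem~\ref{thm: Katz two set characters} is the simple (hence gallant) $\rho_7(G_2)$, while the non-simple $\mathrm{st}^{\otimes 3}(SL_2\times SL_2\times SL_2)$ and $\mathrm{st}^{\otimes 2}(SL_3\times SL_3)$ --- which require rank $8$ or $9$ --- do not occur.

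Since $SL_n$, $Sp_n$ ($n\geq 2$) and $SO_n$ ($n\neq 4$) are gallant, the only obstruction is $SO_4$, and ruling it out is the \emph{heart of the proof}. By the previous paragraph $SO_4$ appears only in the single-set case with $n=a+k=4$ and $a$ even, i.e.\ $\mathcal{H}(\bm{\rho}[2]\sqcup\bm{\rho}[1]^2,\varnothing)$, which arises precisely when the Poisson step of Lemma~\ref{l:poissontrick} was applied to an exponent equal to $2$ with $k=2$; using $\mathcal{K}_k^{a,b}=\mathcal{K}_k^{b,a}$ I would instead apply it to the other exponent, which the hypothesis $\max(r,t)=4$ forces to be $\leq -3$ (the values $-1,-2$ produce ranks $3$ and $2$). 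The relabelled Poisson step then yields a datum $(\bm{\rho}[m]\smallsetminus\{1\},\varnothing)$ or $(\bm{\rho}[1],\bm{\rho}[m]\smallsetminus\{1\})$ with $m\geq 3$, whose rank equals $4$ only for $m=5$ --- where the duality is alternating ($\Lambda_{\bm{\rho}[5]}=1$) and the group is $Sp_4$; and in the two-set case one checks that rank $4$ always forces the relevant product-of-characters ratio to be trivial, again giving $Sp_4$ or $SL_4$. It remains to dispose of the explicit tuples directly: after the Poisson step $(1,-2,1),(-1,-1,3),(-1,-2,3)$ all give $\mathcal{H}(\bm{\rho}[1]^2,\varnothing)$ (monodromy $SL_2$), $(2,-3,1)$ and $(-4,3,1)$ give the rank-$3$ sheaves $\mathcal{H}(\bm{\rho}[2]\sqcup\bm{\rho}[1],\varnothing)$ and $\mathcal{H}(\bm{\rho}[4]\smallsetminus\{1\},\varnothing)$ with $SL_3$ monodromy, and $(a>1,-a,1)$ gives $\mathcal{H}(\bm{\rho}[a]\sqcup\bm{\rho}[1],\varnothing)$, with monodromy $SL_{a+1}$ for $a$ even and $Sp_{a+1}$ for $a$ odd. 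In every case the geometric monodromy group is gallant, which is the claim.
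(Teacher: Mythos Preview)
Your strategy matches the paper's in outline, but the paper executes it as a bare exhaustive table: it lists the nine $n=4$ tuples that are genuinely at risk of $SO_4$ together with the explicit families, writes down the reduced tuple $(a,1,k-1)$ for each, and reads off the monodromy from Theorem~\ref{thm: Katz one set characters}. Every reduced datum turns out to be a \emph{single}-set Kloosterman-type sheaf of rank $\leq 7$ with group $SL_n$ or $Sp_n$ --- so no two-set analysis, no Belyi check, and no exotic-group discussion is ever needed.

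Your more structural argument has a genuine gap at the $SO_4$ step. You correctly note that, for $a>0$, the reduced single-set datum $\bm{\rho}[a]\sqcup\bm{\rho}[1]^k$ yields $SO_4$ exactly when $a+k=4$ and $a$ is even, i.e.\ $(a,k)=(2,2)$. But your cure --- swap $a\leftrightarrow b$ and apply Poisson to the other variable --- proves that $\K_{k-1}^{1,b}$ is gallant, not $\K_{k-1}^{a,1}$, which is what the lemma asserts. In fact the lemma as literally phrased (every tuple with $n=4$) is slightly too strong: for $(a,b,k)=(2,-6,2)$ one has $n=4$, yet the reduction $\K_1^{2,1}$ has datum $\{1,1,1,\chi_{1/2}\}$, twist-selfdual and symmetric, hence $G^0=SO_4$, not gallant. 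This is harmless for the paper because the original sheaf at $(2,-6,2)$ is already $Sp_4$ (one checks $\Lambda_{\bm\chi}/\Lambda_{\bm\theta}=1$) and never needs Lemma~\ref{lem: special cases}; the paper's table tacitly restricts to the genuinely non-gallant $n=4$ tuples, and among those none has $(a,k)=(2,2)$, so the $SO_4$ obstruction simply does not arise. Your argument would go through cleanly if you first enumerated the actual cases (as the paper does) rather than treating the hypothesis ``$n=4$'' at face value; your ``rank $\leq 7$'' claim and your entire two-set discussion are likewise true for the tabulated tuples but fail for arbitrary $n=4$ (e.g.\ $(10,-10,4)$ reduces to rank $14$), and in fact are never needed since every reduction in the table lands in the one-set regime.
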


\begin{proof}
The reduction is done using Lemma \ref{l:poissontrick}. We then identify the new connected component of the monodromy group using Theorem \ref{thm: Katz one set characters}. This is a list of all cases.   

\centering{
 \vspace{15pt}
    \begin{tabular}{c|c|c| c c c|c|c|c}    
    $a$&$b$&$k$&$G^0$ or problem&reduces to &$a$&$b$&$k$ &new $G^0$ \\
    \hline
   2&1&1&$SO_4$ & & 2&1&0& $SL_3$\\
   -3&-2&1&$SO_4$ &&-3&1&0 &$Sp_2$\\ 
   2&-1&3&$SO_4$ && 2&1&2 & $SL_5$ \\
   2&-3&3&$SO_4$&&2&1&2&$SL_5$ \\
   4&-3&1&$SO_4$&&4&1&0&$SL_5$ \\
   2&-5&1&$SO_4$&&2&1&0 &$SL_3$ \\
   2&-5&3&$SO_4$ or $Sp_4$&&2&1&2&$SL_5$\\
    3&-6&4&$SO_4$&&3&1&3 &$SL_7$ \\
    6&-3&1&$SO_4$&&6&1&0 &$SL_7$ \\
    1&-2&1& $\mathfrak{S}_2$&& 1&1&0&$Sp_2$\\
     2&-3&1& $\mathfrak{S}_3$&&2&1&0&$SL_3$\\
     -4&3&1& $\mathfrak{S}_4$&&-4&1&0&$SL_3$\\
    -1&-1&3&rank 1&&-1 &1&2&$Sp_2$ \\
    -1&-2&3&rank 1&&-1 &1&2&$Sp_2$ \\
     $a>1$&$-a$&1&rank 1&& $a$&1&0&$\begin{cases}
        SL_{a+1} & \text{ if $a$ even} \\
        Sp_{a+1}  & \text{ if $a$ odd}
    \end{cases}$\\

    \end{tabular}}
    \vspace{15pt}
    
Choosing $\delta$ in Lemma \ref{l:poissontrick} small enough it now suffices to bound the correlation sum of the new trace function. These are all gallant. \qedhere
\end{proof}
\begin{rem}
    The case $(-1,-2,3)$ and $(-1,-1,3)$ can alternatively be treated by applying the functional equation or by following the recipe in \cite{FKMSbilinear}, Section 9.10. 
    The cases that have $G^0=SO_4$ are actually almost all (except $(2,-5,1)$ and $(3,-6,4)$) so called oxozonic cases (i.e. $G=O_4$). They can be treated by the standard procedure even though they are not gallant, see \cite{FKMSbilinear}, Section 8 and Theorem 9.3.
\end{rem}

\section{Proof of the asymptotic formulae}\label{s: Adaptation second moment}
In this section we adapt the asymptotic formulae for the second toroidal moment for $k=0$ of \cite{FKMsecondmom} to complete the proof of the two special cases in Theorem \ref{thm:introduction1}. We furthermore apply the bound on trace functions of gallant sheaves introduced in Theorem \ref{thm weakly gkr from fkms} to prove the general case of Theorem \ref{thm:introduction1}. We start with the adaptation of \cite{FKMsecondmom}. They show the following asymptotics. 

\begin{thm}\label{thm second moment k=0}[\cite{FKMsecondmom}, Thm. 1.1]
Let $a,b\in \ZZ\smallsetminus\{0\}$ and $q$ a prime number. There exists and absolute and effective constant $c>0$ and a real number $C$ such that, defining 
$$ \delta=\frac{c}{|a|+|b|},$$ we have, the following asymptotic formulae: 
\begin{enumerate}
    \item If $a+b=0$ then 
    $$\frac{1}{q-1}\sum_{\chi \mods q}L(\chi^a,\tfrac 12)L(\chi^{-a},\tfrac 12)=\log q+2C+O(q^{-\delta}).$$
    \item If $a+b\neq0$ then 
       $$\frac{1}{q-1}\sum_{\chi \mods q}L(\chi^a,\tfrac 12)L(\chi^{b},\tfrac 12)=\begin{cases}
           \zeta\left(\frac{|a|+|b|}{2(a,b)}\right)+O(q^{-\delta}) &\text{ if }ab<0 \\
           1+O(q^{-\delta}) &\text{ if }ab>0.
       \end{cases}$$
\end{enumerate}
\end{thm}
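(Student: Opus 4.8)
This is \cite{FKMsecondmom}, Theorem~1.1; it is the $k=0$ instance of the strategy developed in Sections~\ref{S: AFE, dyadic and PV}--\ref{s: weakly gkr}, and Section~\ref{s: Adaptation second moment} revisits it to produce the twisted variants. The plan is to apply the approximate functional equation, extract a main term from a diagonal, and bound the remaining off-diagonal by completion and Weil's bound.

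First I would apply the approximate functional equation to $L(\chi^a,\tfrac12)L(\chi^b,\tfrac12)$ as in the opening of Section~\ref{S: AFE, dyadic and PV}; in the present case there is no factor $\varepsilon(\chi)^k$, so the step is simpler than Proposition~\ref{prop: application AFE}. After removing the $O(|a|+|b|)$ characters for which $\chi^a$ or $\chi^b$ is trivial (each contributing $\ll q^{1/2+\varepsilon}$ by the convexity bound, hence $\ll q^{-1/2+\varepsilon}$ in total) and using orthogonality in the form $\frac{1}{q-1}\sumst_{\chi\mods q}\chi(n)=\mathbf{1}_{n\equiv 1\mods q}-\frac{1}{q-1}$ for $(n,q)=1$, the average takes the form
\begin{equation*}
\sum_{\substack{m,n\geq 1\\(mn,q)=1}}\frac{V(mn/X)}{(mn)^{1/2}}\,\mathbf{1}_{m^an^b\equiv 1\mods q}\ +\ (\text{dual term with }Y,\ XY=q^2)\ +\ O(q^{-1/2+\varepsilon}),
\end{equation*}
with $V=V_{a,b,\pm}$ as in the excerpt, the error also absorbing the harmless $-\frac{1}{q-1}$ tails. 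Expanding the Gauss sums $\varepsilon(\chi^a)\varepsilon(\chi^b)$ in the dual term and applying orthogonality once more rewrites it as a short bilinear sum involving the trace function $\K_0^{a,b}$ of \eqref{eq: defi K_k^a,b for k=0} (the $\widetilde T_{a,b}$ of \cite{FKMsecondmom}).

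Next I would read off the main term from the exact solutions of $m^an^b=1$ in $\ZZ$. If $ab>0$ the only one is $m=n=1$, contributing $V(1/X)=1+O(q^{-A})$: this is case~(2) with $ab>0$. If $ab<0$, say $a>0>b$ and $g=(a,b)$, the solutions are $m=t^{-b/g}$, $n=t^{a/g}$ for $t\geq 1$, so $mn=t^{(|a|+|b|)/g}$ and
\begin{equation*}
\sum_{t\geq 1}\frac{V\!\left(t^{(|a|+|b|)/g}/X\right)}{t^{(|a|+|b|)/(2g)}}=\zeta\!\left(\frac{|a|+|b|}{2g}\right)+O(q^{-A}),
\end{equation*}
the series converging exactly because $|a|+|b|>2g$ when $a+b\neq 0$: this is case~(2) with $ab<0$. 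If $a+b=0$, then $m^an^b=1$ forces $m=n$ and $\sum_m m^{-1}V(m^2/X)=\tfrac12\log X+C'+O(q^{-A})$; the dual bilinear sum is now built on $\K_0^{a,-a}$, which is geometrically trivial, and its diagonal part contributes the matching $\tfrac12\log Y+C''$, so the two halves add to $\tfrac12\log(XY)+2C=\log q+2C$ by $XY=q^2$: this is case~(1).

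Finally, the off-diagonal consists of the terms with $m^an^b\equiv 1\mods q$ but $m^an^b\neq 1$, together with the non-diagonal part of the dual bilinear sum; the latter is genuinely small precisely when $a+b\neq 0$, since $\K_0^{a,b}$ is then geometrically nontrivial (\cite{FKMsecondmom}, Prop.~3.7--Rem.~3.8). Here I would split into dyadic blocks $m\sim M$, $n\sim N$ with $MN\asymp q$, complete the $n$-variable modulo $q$ by P\'olya--Vinogradov, and apply Weil's bound to the resulting complete monomial sums $\sum_{n\mods q}e_q(h m^a n^b+\widetilde n\, n)$, which have square-root cancellation unless degenerate; summing against the rapid decay of the completed weights and the $q^{-1/2}$ normalization of $\K_0^{a,b}$ yields a bound $O(q^{-\delta})$. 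The main obstacle is the uniformity in $a,b$: the conductors of the monomial-sum sheaves grow with $|b|$, the congruence $n^b\equiv v\mods q$ has up to $|b|$ roots, and the dyadic decomposition and the completion cost further powers of $|a|+|b|$ in the implied constants — which is exactly why the admissible exponent must be taken of size $\delta=c/(|a|+|b|)$.
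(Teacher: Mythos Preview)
Your sketch correctly identifies the overall architecture (approximate functional equation, main term from the \emph{systematic} solutions of $m^an^b=1$, error from the non-systematic congruence solutions and from the dual sum built on $\K_0^{a,b}=\widetilde T_{a,b}$), and your extraction of the main terms in all three cases matches the outline the paper gives of \cite{FKMsecondmom}. However, your treatment of the two error terms diverges from the FKM argument in two essential respects, and as written it does not yield a power saving.

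For the congruence off-diagonal (pairs with $m^an^b\equiv 1\mods q$ but $m^an^b\neq 1$) you propose to Fourier-detect the congruence and then complete in $n$, arriving at the sums $\sum_{n\mods q}e_q(hm^an^b+\tilde n\,n)$. The detection step introduces a full sum over $h\mods q$; Weil saves only $q^{1/2}$ on each completed $n$-sum, and after summing trivially over $m$ and $h$ you are left with no power saving. The FKM approach is different in kind: one bounds the \emph{number} of non-systematic solutions directly via Pierce's counting result (or its variant due to Nunes used in the paper, see~\eqref{eq: Nunes replacement for pierce}), which gives $\vert\{m\sim M,\,n\sim N:\ m^u\equiv an^v\mods q\}\vert\ll M^{2/3}N^{1/4}q^{\varepsilon}$. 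That arithmetic input, not completion, is what furnishes the saving in $N_{a,b}(X)$, and is precisely the ``result of Pierce'' highlighted in the paper's outline.

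Second, FKM do \emph{not} use a balanced functional equation: they take $X=q^{9/8-3\beta/4}$, $Y=q^{7/8+3\beta/4}$ (see~\eqref{eq: choice X and Y in fkm}). With $Y<q$ the dual sum $P_{a,b}(Y)$ is short enough that the pointwise Weil--Deligne bound $\widetilde T_{a,b}\ll 1$ (valid when $a+b\neq 0$) already gives $P_{a,b}(Y)\ll q^{-1/2}Y^{1/2+\varepsilon}\ll q^{-\delta}$. With your balanced choice $MN\asymp q$, P\'olya--Vinogradov completion helps only when $N>q^{1/2}$, and for $M\asymp N\asymp q^{1/2}$ you are stuck at size $\asymp 1$; getting past that range non-trivially is exactly the substance of Sections~\ref{s: dyadic partition and poly vinogradov}--\ref{s: weakly gkr} of the present paper (for $k\neq 0$), whereas for $k=0$ FKM simply sidestep it by unbalancing.
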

Since we will adapt the proof of this result, we give a quick outline of their strategy. 
After applying the approximate functional equation and splitting into even and odd characters in Section 5, they need to handle two sums, 
\begin{align*}
           N_{a,b}(X)&=\frac 12 \sum_{\substack{m,n\geq 1\\ (m^an^b)^2\equiv 1\mods q}}\frac{1}{\sqrt{mn}}V\left(\frac{mn}{X}\right) \\
    \text{and  }   P_{a,b}(Y)&=\frac{1}{2\sqrt q}\sum_{m,n\geq 1}\frac{1}{\sqrt{mn}}\widetilde{T}_{2a,2b}(m^{2a}n^{2b};q)V\left(\frac{mn}{Y}\right), \\
    \text{where  } 
\widetilde{T}_{a,b}(u;q)&=\frac{1}{\sqrt q}\sum_{\substack{x,y\in \FF_q^\times \\x^ay^b\equiv u \mods q}}e\left(\frac{x+y}{q}\right).
\end{align*}
The main term is extracted from $N_{a,b}(X)$. To bound the error term coming from $N_{a,b}(X)$, in Section 4 they analyze toric congruences, most importantly they introduce a result of Pierce, \cite{Pierce1}. The sum $P_{a,b}(Y)$ only contributes to the error term. For this they consider the needed exponential sums in Section 3. Finally in Section 6 the case $a+b\neq 0$ is treated with a case distinction between $ab>0$ and $ab<0$. They conclude in Section 6.3 by choosing \begin{equation}\label{eq: choice X and Y in fkm}
X=q^{9/8-3\beta/4} \text{ and }Y=q^{7/8+3\beta/4}\end{equation} 
for $\beta=\frac{c'}{|a|+|b|}$ for $c'$ fixed and small enough.

Analogous to this we obtain very similar asymptotic formulae if the product of L-functions is twisted by a character. More precisely we need and show the following asymptotics.

\begin{prop}\label{prop: Adaptation of second moment k=0}
Let $a,b\in \ZZ\smallsetminus\{0\}$, $l_1,l_2\leq L$ be positive integers and $q$ a prime number. There exists $\eta=\eta(a,b)$ and some constant $ c(l_1,l_2,b)\geq0$ such that the following holds.
\begin{enumerate}

\item   For $ab>0$ we have 
$$    \frac{1}{q-1}\sumst_{\chi\mods q}\chi(l_1^al_2^b)L(\chi^a,\tfrac 12)L(\chi^b,\tfrac 12)=\begin{cases}
            1 +O(q^{-\eta}) &\text{if }l_1=l_2=1  \\ 
             O(q^{-\eta}) &\text{if $l_1$ or }l_2\neq1.  \\ 
        \end{cases} $$
\item For $a=1$ we have 
$$\frac{1}{q-1}\sumst_{\chi\mods q}\chi(l_1^{-1}l_2^b)L(\chi,\tfrac 12)L(\chi^b,\tfrac 12)=\begin{cases}
              \frac{1}{\sqrt{l_1^{-1}l_2^b}}\zeta\left(\frac{|b|+1}{2}\right) +O(q^{-\eta}) &\text{if }b<-1 \\ 
              c(l_1,l_2,b) +O(q^{-\eta}) &\text{if }b>0.
        \end{cases} $$
  \item  For $a=-1$ we have     
  $$\frac{1}{q-1}\sumst_{\chi\mods q}\chi(l_1l_2^b)L(\chi^{-1},\tfrac 12)L(\chi^b,\tfrac 12)=\begin{cases}
           \frac{1}{\sqrt{l_1l_2^b}} \zeta\left(\frac{|b|+1}{2}\right) +O(q^{-\eta}) &\text{if }b>1 \\ 
             c(l_1,l_2,b) +O(q^{-\eta}) &\text{if }b<0 .\\ 
        \end{cases} $$
    \end{enumerate}
The constant $ c(l_1,l_2,b)$ is the number of solutions to the equation $mn^bl_2^b=l_1$ with $mn\leq q$.
\end{prop}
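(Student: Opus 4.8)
The plan is to run the argument of \cite{FKMsecondmom} for the $k=0$ toroidal second moment recalled above, carrying the twist $\chi(l_1^al_2^b)$ — together with the parity factor $i^{-\kappa(\chi)}$ that appears in the configurations reached through the functional equation of Remark \ref{rem: the FE and its consequences} — through every step, and verifying that all estimates remain uniform in $l_1,l_2\leq L=q^\alpha$. One should note that every sign configuration occurring in (1)--(3) satisfies $a+b\neq0$, so the degenerate behaviour responsible for the $\log q$ term in \eqref{eq:introduction0moment} never occurs here.

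\textbf{Reduction.} First I would apply the approximate functional equation to $L(\chi^a,\tfrac12)L(\chi^b,\tfrac12)$ as in Proposition \ref{prop: application AFE}, split the sum over primitive characters into even and odd parts, use orthogonality of characters, and remove the trivial character by the convexity bound \eqref{eq: chia or chi b trivial}. Exactly as in \cite{FKMsecondmom}, Section 5, this writes the left-hand side of (1)--(3) as $N(X)+P(Y)+O\big(q^{-1/2}+q^{-1}(X^{1/2+\varepsilon}+Y^{1/2+\varepsilon})\big)$, where $N(X)$ is a diagonal sum detecting the congruence $\xi\, m^an^b\equiv\pm1\pmod q$ with weight $(mn)^{-1/2}V_{a,b,\pm}(mn/X)$ — here $\xi$ is the inverse twist assembled from $l_1,l_2$, the integer equation reading this congruence being of the shape $mn^bl_2^b=l_1$ entering the definition of $c(l_1,l_2,b)$ — while $P(Y)$ collects the shifted exponential sums $\widetilde{T}_{2a,2b}(\overline\xi\, m^{2a}n^{2b};q)$ of \cite{FKMsecondmom}, Section 3, with weight $q^{-1/2}(mn)^{-1/2}V_{a,b,\pm}(mn/Y)$.

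\textbf{Bounding $P(Y)$ and the error in $N(X)$.} Since $a+b\neq0$, the $\ell$-adic complex underlying $\widetilde{T}_{2a,2b}$ is geometrically non-trivial (\cite{FKMsecondmom}, Prop.\ 3.7 and Rem.\ 3.8), hence $\widetilde{T}_{2a,2b}(\bullet;q)\ll_{a,b}1$ by the Riemann Hypothesis over finite fields, uniformly in the multiplicative shift $\overline\xi$; summing trivially gives $P(Y)\ll q^{-1/2+o(1)}Y^{1/2}$. For $N(X)$, I would isolate the true solutions $\xi\, m^an^b=\pm1$ in $\ZZ_{\geq1}^2$ and estimate the remaining genuine congruences ($\xi\, m^an^b\equiv\pm1$ but $\neq\pm1$) exactly as in \cite{FKMsecondmom}, Section 4: after a smooth dyadic decomposition one is reduced to counting solutions of a twisted toric congruence, which is handled by Pierce's estimate \cite{Pierce1}. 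Taking the \cite{FKMsecondmom} choice $X=q^{9/8-3\beta/4}$, $Y=q^{7/8+3\beta/4}$ with $\beta=c'/(|a|+|b|)$ makes $P(Y)$, the genuine-congruence contribution, and the displayed error term all $\ll q^{-\eta}$ for a suitable effective $\eta=\eta(a,b)>0$.

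\textbf{Main term and main obstacle.} The main term is the contribution of the true solutions, weighted by $(mn)^{-1/2}V_{a,b,\pm}(mn/X)$; since $\xi$ is built from $l_1,l_2\leq q^\alpha$ and $V_{a,b,\pm}=1+O(y^A)$ on the (very short) range where these solutions lie, this is an elementary evaluation: when $ab>0$ the solution set is finite and its weighted count equals $1$ resp.\ $0$ in case (1) (according as the twist is trivial or not) and equals $c(l_1,l_2,b)$ in the twisted cases (2)--(3), while when $ab<0$ (the cases $(1,b<-1)$ and $(-1,b>1)$) the solutions form an infinite family parametrised by one free variable and the weighted sum is $\zeta\big((|b|+1)/2\big)$ times the stated power of $l_1^{-1}l_2^b$. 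The main obstacle is making the two error estimates of \cite{FKMsecondmom} uniform in $l_1,l_2\leq q^\alpha$: for $P(Y)$ this is automatic, a multiplicative shift not affecting square-root cancellation, but for $N(X)$ one has to check that the polynomial/resultant input to Pierce's bound is unaffected — up to the fixed $q$-coprime factors $l_i^{\pm1}$ — by passing from $m^an^b$ to $\xi\,m^an^b$, and, as a minor point, that the parity factor $i^{-\kappa(\chi)}$ inherited from Remark \ref{rem: the FE and its consequences} only reshuffles the even and odd parts and stays compatible with the orthogonality step.
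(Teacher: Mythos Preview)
Your proposal is correct and follows essentially the same route as the paper: adapt the \cite{FKMsecondmom} argument by carrying the twist through the diagonal/off-diagonal split $N(X)+P(Y)$ and reuse the choice $X=q^{9/8-3\beta/4}$, $Y=q^{7/8+3\beta/4}$. The paper is more concrete on exactly the point you flag as the main obstacle: rather than checking that Pierce's input is unaffected by the shift, it replaces \cite{FKMsecondmom}, Theorem 4.4 by \cite{Nunes_2017}, Lemma 1.4, which gives the toric-congruence count $\vert\{m\sim M,\ n\sim N:\ m^u\equiv an^v\pmod q\}\vert\ll M^{2/3}N^{1/4}q^\varepsilon$ \emph{uniformly} in $a\in(\ZZ/q\ZZ)^\times$, and then imposes the explicit constraint $L<q^{1/(2(|a|+|b|))}$ so that every non-systematic solution is forced to satisfy $\max(m,n)\geq (q-1)^{1/(2(|a|+|b|))}$, putting it in range of that bound.
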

\begin{rem}
    The constant $c(l_1,l_2,b)$ frequently equals 0. 
\end{rem}

\begin{proof}
We adapt \cite{FKMsecondmom} and henceforth use their notation.
We begin with the treatment of the respective main terms in the case of even characters. First we check that we can still use the results of \emph{loc. cit.} Section 4.

The trivial bound for the case $ ab\neq 0$ given in \cite{FKMsecondmom}, Equation (4.3) does not change upon choosing $u$ appropriately: 
$$\frac{|\{(x,y)\in I\times J :\ x^ay^b\equiv l_1^{\pm a}l_2^{-b}u \mods q\}|}{\sqrt{|I||J|}}\ll \frac{\sqrt{|I|}}{\sqrt{|J|}}\left(\frac{|J|}{q}+1\right)$$
We replace \emph{loc. cit.} Theorem 4.4 by \cite{Nunes_2017}, Lemma 1.4, which gives us that for $q$ prime, $a\in (\ZZ/q\ZZ)^\ast$, $1\leq M\leq q^{3/4}$ and $1\leq N<q/2$ we have
\begin{equation}\label{eq: Nunes replacement for pierce}
    |\{m\sim M, n\sim N; m^u\equiv an^v \mods q\}| \ll M^{2/3}N^{1/4}q^\varepsilon.
\end{equation}
This bound is uniform in $a$. 
Since in \emph{loc. cit.} they use Theorem 4.4 for $k=2$ this bound actually matches theirs. 
    
 Now we adapt \cite{FKMsecondmom}, Section 6.1, where the case $a,b>0$ is treated.
    For (1) we evaluate
    $$    N_{a,b}(X)=\frac 12 \sum_{\substack{m,n\geq 1\\ (m^an^bl_1^al_2^b)^2\equiv 1\mods q}}\frac{1}{\sqrt{mn}}V\left(\frac{mn}{X}\right) .$$
For $l_1=l_2=1$ we have the solution $(m,n)=(1,1)$. If $(l_1,l_2)\neq (1,1)$ we do not have a trivial solution. All contributions of non-trivial solutions are part of the error term. The treatment of these remaining cases is then carried out analogously to \cite{FKMsecondmom} upon choosing $L<q^{\frac{1}{2(a+b)}}$. Then we have $\max (ml_1,nl_2)\geq (q-1)^{\frac{1}{a+b}}$ and hence in particular $\max (m,n)\geq (q-1)^{\frac{1}{2(a+b)}}$ for $l_1,l_2\leq L$. The rest of the argument is analogous to \cite{FKMsecondmom}. 
Hence 
$$\frac{1}{q-1}\sum_{\chi\mods q}\chi(l_1^a)\chi(l_2^b)L(\chi^a,\tfrac 12)L(\chi^b,\tfrac 12)=\begin{cases} 1 + O(q^{-\eta}) & \text{if }l_1=l_2=1 \\ 
       O(q^{-\eta}) &\text{else} \end{cases}.$$

In (2) for $b>0$ (and analogously (3) for $b<0$) we have a main term from the summation over $\chi$ coming from 
$$N_{-1,b}(X)=\frac12\sum_{\substack{m,n\geq 1\\ (\bar m n^bl_1l_2^b)^2\equiv 1\mods q}}\frac{1}{\sqrt{mn}}V\left(\frac{mn}{X}\right).$$
More precisely it is coming from the trivial solutions of the form $mn^bl_2^b=l_1$. The number of these solutions is finite and depends on the values of $l_1,l_2$ and $b$. The treatment of the non-trivial solutions is carried out in the same manner as for (1).

We now turn to \cite{FKMsecondmom}, Section 6.2 where the case $a>0, b<0$ for $a+b\neq 0$ is treated. 
This applies to (2) for $b<-1$ and analogously (3) for $b>1$. We have a main term from the summation over $\chi$ coming from 
$$N_{-1,b}(X)=\frac12\sum_{\substack{m,n\geq 1\\ (\bar m n^bl_1l_2^b)^2\equiv 1\mods q}}\frac{1}{\sqrt{mn}}V\left(\frac{mn}{X}\right).$$
More precisely it is coming from the solutions of the form $m=n^bl_2^bl_1$, denoted by $N^{syst}_{-1,b}(X)$ in \emph{loc. cit.} This gives us $$\frac 12 \frac{1}{\sqrt{l_1l_2^b}}\zeta\left(\frac{b+1}{2}\right)+O\left(\left(\frac{X}{L^{b+1}}\right)^{-1/2+\varepsilon}\right).$$ 
For $L<q^{\frac{1}{2(b+1)}}$ we have that $n^bl_2^bl_1\geq q/2$ implies $\max(m,n)\geq(q/2)^{\frac{1}{2b}}$. The rest of the argument in Section 6.2 is carried out analogously.

Lastly we adapt \cite{FKMsecondmom}, Section 6.3. The odd parts are handled analogously to the even parts, exactly as done in \cite{FKMsecondmom}.
The error terms in all remaining cases are given by 
$$   P_{a,b}(Y)=\frac{1}{2\sqrt q}\sum_{m,n\geq 1}\frac{1}{\sqrt{mn}}\widetilde{T}_{2a,2b}((m^an^bl_1^{\pm a}l_2^{-b})^2;q)V\left(\frac{mn}{Y}\right),$$
    where 
$$\widetilde{T}_{a,b}(u;q)=\frac{1}{\sqrt q}\sum_{\substack{x,y\in \FF_q^\times \\x^ay^b\equiv u \mods q}}e\left(\frac{x+y}{q}\right).$$ 
The treatment of $\widetilde{T}_{a,b}(u;q)$ is insensitive of the shape of $u$ (but might depend on its value), see \cite{FKMsecondmom}, 
Proposition 3.7. 
Hence we once again proceed exactly as in \cite{FKMsecondmom}, Section 6.3 including their choice of $X$ and $Y$ and conclude the proof after choosing $L=q^\alpha$ small enough.

\end{proof}

\begin{proof}[Proof of Theorem \ref{thm:introduction1}] 
In the generic case we want to show that 
$$M_{a,b,k}(\xi;q)\ll_k q^{-\eta}.$$
By Proposition \ref{prop: application AFE}, we have that $M_{a,b,k}(\xi;q)$ is essentially a linear combination of $M_{a,b,\Kl_{|k|}}(\pm\xi)$ and $M_{a,b,\K_{|k|}^{a,b}}(\pm\xi)$. By Corollary \ref{cor: conclusion chap 4} the latter two can be bounded as desired once we establish \eqref{eq: type 1 eq} and \eqref{eq: type 1 gallant}.
The bound \eqref{eq: type 1 eq} is given by Proposition \ref{prop:ab for e_q}. The bound \eqref{eq: type 1 gallant} follows from Theorem \ref{thm weakly gkr from fkms} for all gallant cases including the cases from Lemma \ref{lem: special cases}.
This concludes the generic case.

It remains to show that
    \begin{align*}
        \frac{1}{q-1}&\sumst_{\chi \mods{q}}\chi(l_1^al_2^b)L\left(\chi^a,\tfrac{1}{2}\right)L\left(\chi^b,\tfrac{1}{2}\right)\varepsilon(\chi)^k \\  
     =&\begin{cases} \frac{\zeta(\frac{b+1}{2})}{l_1^{-1/2}l_2^{b/2}}+ O_k( q^{-\eta}) &\text{if }(a,b,k)=(1,b>1,-1) \text{ or } (-1,b<-1,1) \\
          c(l_1,l_2,b)+ O_k( q^{-\eta}) &\text{if }(a,b,k)=(1,b<0,-1) \text{ or } (-1,b>0,1). \\
        \end{cases}
    \end{align*}
For $a=1$ and $k=-1$ (or analogously for $a=-1$ and $k=1$) we have by the functional equation that 
\begin{align*}
    \sumst_{\chi \mods q}\chi(l_1l_2^b)L(\chi,\tfrac 12)L(\chi^b,\tfrac 12)\overline{\varepsilon( \chi)}=\sumst_{ \chi \mods q}i^{\kappa(\chi)}\chi(-l_1l_2^b)L(\bar \chi,\tfrac 12)L(\chi^b,\tfrac 12) ,\\
\end{align*}
where $\kappa(\chi)=1$ if $\chi$ is odd and 0 otherwise.  Now the claim follows from Proposition \ref{prop: Adaptation of second moment k=0} upon noting that even and odd characters are treated separately and hence the factor $\chi(-1)i^{\kappa(\chi)}$ does not change the asymptotics. 
\end{proof}

\section{Mollifiers and non-vanishing}\label{s: non-vanishing}

In this section, we prove the following technical result, which proves Theorem \ref{thm:intro nonvanishing}.

\begin{thm}\label{thm:non-vanishing}Let $I \subset (-\pi,\pi]$ be a non-empty interval and let $\mu(I)$ denote the Lebesgue measure of $I$. Let $a,b$ be two non-zero integers. 
Then, there exist constants $c_2 \neq 0\neq c_4$, $c_2'$, $\eta'>0$ and $c$ such that for every $\epsilon >0$ there is a constant $C(\epsilon)>0$ such that
    \begin{equation*}\label{eq:intrononvanishing2}
   \frac{1}{q-1}E(a,b;q,I) \geq \frac{1}{c_4^2}\left(\mu(I)c_2-\epsilon\vert{ab}\vert^{1/2}c_2'-\frac{c}{\log q}-C(\epsilon)q^{-\eta'}\right)^2.
\end{equation*}
Moreover $c = 0$ if $\pm 1\notin \{a,b\}$.
In particular, for every $q$ large enough, there is a positive proportion of Dirichlet characters whose angle lies in $I$ so that both central $L$-values $L\left(\chi^a,\tfrac{1}{2}\right),$ $L\left(\chi^b,\tfrac{1}{2}\right)$ are non-vanishing. 
\end{thm}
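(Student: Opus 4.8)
The plan is a mollification argument of Iwaniec--Sarnak type whose first-moment input is Theorem~\ref{thm:introduction1}. Since $L(\chi,\tfrac12)=0$ precisely when $L(\bar\chi,\tfrac12)=0$, I would first reduce to $a,b>0$. Fix $\epsilon>0$ and choose $\phi\in C_c^\infty((-\pi,\pi])$ with $0\leq\phi\leq1$, $\operatorname{supp}\phi\subset I$ and $\int_{-\pi}^{\pi}\phi(\theta)\,\mathrm{d}\theta\geq\mu(I)-\epsilon$; write $\phi(\theta)=\sum_{k\in\ZZ}\widehat\phi(k)e^{ik\theta}$, so that $\phi(\theta(\chi))=\sum_{k}\widehat\phi(k)\varepsilon(\chi)^{k}$ with $|\widehat\phi(k)|\ll_{\phi,A}|k|^{-A}$, and fix a cut-off $K=K(\epsilon)$ with $\sum_{|k|>K}|\widehat\phi(k)|<\epsilon$. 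Take the mollifier $M(\chi)=M_a(\chi^a)M_b(\chi^b)$ of length $L=q^{\alpha}$ with $\alpha$ small enough that $\alpha<\eta(a,b,\alpha)$ (the exponent of Theorem~\ref{thm:introduction1}), where $M_c(\psi)=\sum_{\ell\leq L}\mu(\ell)P\!\left(\tfrac{\log(L/\ell)}{\log L}\right)\psi(\ell)\ell^{-1/2}$ for a fixed polynomial $P$ with $P(1)=1$. Expanding, $M(\chi)=\sum_{\ell_1,\ell_2\leq L}x_{\ell_1}y_{\ell_2}(\ell_1\ell_2)^{-1/2}\chi(\ell_1^{a}\ell_2^{b})$ with $x_1=y_1=1$, with the Möbius cancellation $\sum_{\ell}x_\ell/\ell\ll 1/\log L$ and $\sum_\ell|y_\ell|\ell^{-s}\ll_s 1$ for $s\geq1$, and $\|(x_\ell)\|_2,\|(y_\ell)\|_2\ll(\log q)^{O(1)}$.

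Put $A_\chi:=L(\chi^a,\tfrac12)L(\chi^b,\tfrac12)M(\chi)$. Writing $\phi=\phi^{1/2}\phi^{1/2}$, Cauchy--Schwarz together with $0\leq\phi\leq1$ and $\operatorname{supp}\phi\subset I$ gives
\[
E(a,b;q,I)\ \geq\ \frac{\bigl|\,\sumst_{\chi\mods q}\phi(\theta(\chi))A_\chi\,\bigr|^{2}}{\sumst_{\chi\mods q}\phi(\theta(\chi))|A_\chi|^{2}}\ \geq\ \frac{\bigl|\,\sumst_{\chi\mods q}\phi(\theta(\chi))A_\chi\,\bigr|^{2}}{\sumst_{\chi\mods q}|A_\chi|^{2}},
\]
the first step using $\sumst_{\chi\mods q}\phi(\theta(\chi))\mathbf{1}_{A_\chi\neq0}\leq\#\{\chi:\theta(\chi)\in I,\ L(\chi^a,\tfrac12)L(\chi^b,\tfrac12)\neq0\}=E(a,b;q,I)$. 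For the denominator I would prove $\tfrac1{q-1}\sumst_{\chi\mods q}|A_\chi|^{2}\leq c_4^{2}$ with $c_4\neq0$: since $|A_\chi|^{2}=|L(\chi^a,\tfrac12)M_a(\chi^a)|^{2}|L(\chi^b,\tfrac12)M_b(\chi^b)|^{2}$, Cauchy--Schwarz bounds the sum by $\bigl(\sumst_\chi|L(\chi^a,\tfrac12)M_a(\chi^a)|^{4}\bigr)^{1/2}\bigl(\sumst_\chi|L(\chi^b,\tfrac12)M_b(\chi^b)|^{4}\bigr)^{1/2}$, and the substitution $\psi=\chi^{a}$ (hitting each $a$-th power with multiplicity $(a,q-1)\leq|a|$) reduces each factor to the mollified fourth moment $\sumst_{\psi\mods q}|L(\psi,\tfrac12)M_c(\psi)|^{4}\ll q$ of Zacharias~\cite{Zacharias2016MollificationOT} (with mollifier $M_c^{2}$ of $L(\psi,\tfrac12)^{2}$); one may take $c_4\ll|ab|^{1/4}$.

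The heart of the proof is the numerator. Expanding $M$ and $\phi(\theta(\chi))$,
\[
\frac1{q-1}\sumst_{\chi\mods q}\phi(\theta(\chi))A_\chi=\sum_{k\in\ZZ}\widehat\phi(k)\sum_{\ell_1,\ell_2\leq L}\frac{x_{\ell_1}y_{\ell_2}}{(\ell_1\ell_2)^{1/2}}\,M_{a,b,k}(\ell_1^{a}\ell_2^{b};q),
\]
and I would split into $k=0$, $0<|k|\leq K$, and $|k|>K$. For $k=0$, Proposition~\ref{prop: Adaptation of second moment k=0}(1) (applicable since $ab>0$) gives $M_{a,b,0}(\ell_1^{a}\ell_2^{b};q)=\delta_{\ell_1=\ell_2=1}+O(q^{-\eta})$, so this block equals $\widehat\phi(0)+O(q^{\alpha-\eta})$ with $\widehat\phi(0)=\tfrac1{2\pi}\int\phi\geq\tfrac{\mu(I)}{2\pi}-\epsilon$; this yields the main term $\mu(I)c_2$ with $c_2=\tfrac1{2\pi}$. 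For $0<|k|\leq K$ with $\pm1\notin\{a,b\}$, Theorem~\ref{thm:introduction1} gives $M_{a,b,k}(\cdot;q)\ll_k q^{-\eta}$ for all $k\neq0$, so this block is $O(C(K)q^{\alpha-\eta})$ and $c=0$. If instead $1\in\{a,b\}$, the only non-negligible term in that range is $k=-1$ (all other $k$ there contribute $O_k(q^{-\eta})$ by Theorem~\ref{thm:introduction1} and its proof, including the triple $(1,1,1)$ excluded from the statement but covered by Corollary~\ref{cor: conclusion chap 4}): by Theorem~\ref{thm:introduction1}, or, when $a=b=1$, by Remark~\ref{rem: the FE and its consequences} together with a twisted version of the $a+b=0$ asymptotic of \cite{FKMsecondmom}, its mollifier-weighted value is $\widehat\phi(-1)\zeta\!\bigl(\tfrac{b+1}{2}\bigr)\bigl(\sum_{\ell_1}\tfrac{x_{\ell_1}}{\ell_1}\bigr)\bigl(\sum_{\ell_2}\tfrac{y_{\ell_2}}{\ell_2^{(b+1)/2}}\bigr)+O(C(K)q^{\alpha-\eta})$ (with an analogous expression when $b=1$), which by the cancellation $\sum_\ell x_\ell/\ell\ll 1/\log L$ is $O(1/\log q)$ --- the term $c/\log q$, with $c$ absolutely bounded. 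For $|k|>K$ positivity of $\phi$ is unavailable, but for each such $k$
\[
\Bigl|\sum_{\ell_1,\ell_2\leq L}\frac{x_{\ell_1}y_{\ell_2}}{(\ell_1\ell_2)^{1/2}}M_{a,b,k}(\ell_1^{a}\ell_2^{b};q)\Bigr|=\frac1{q-1}\Bigl|\sumst_{\chi\mods q}\varepsilon(\chi)^{k}A_\chi\Bigr|\leq\Bigl(\frac1{q-1}\sumst_{\chi\mods q}|A_\chi|^{2}\Bigr)^{1/2}\leq c_4,
\]
so this block is $\ll c_4\sum_{|k|>K}|\widehat\phi(k)|<c_4\epsilon$. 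Altogether the numerator is $\mu(I)c_2$ plus a quantity of modulus at most $\epsilon|ab|^{1/2}c_2'+c/\log q+C(\epsilon)q^{\alpha-\eta}$ (with $c=0$ when $\pm1\notin\{a,b\}$); inserting this into the Cauchy--Schwarz inequality, dividing by $q-1$ and setting $\eta'=\eta-\alpha>0$ gives exactly
\[
\frac{E(a,b;q,I)}{q-1}\ \geq\ \frac1{c_4^{2}}\Bigl(\mu(I)c_2-\epsilon|ab|^{1/2}c_2'-\frac{c}{\log q}-C(\epsilon)q^{-\eta'}\Bigr)^{2},
\]
whose right-hand side is a positive constant once $\epsilon$ is small and $q$ is large; this is the asserted positive proportion.

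I expect the main obstacle to be the interplay between the cut-off $K$ and the error terms. Because the constant in $O_k(q^{-\eta})$ from Theorem~\ref{thm:introduction1} is not explicit in $k$, the tail $|k|>K$ of the Fourier series can be controlled only through the mollified second-moment bound $c_4$ --- this is precisely why the mollifier is indispensable here and not merely an efficiency device --- while the head $0<|k|\leq K$ forces the constraint $\alpha<\eta(a,b,\alpha)$ on the mollifier length so that $C(K)q^{\alpha-\eta}\to0$. A secondary point is the denominator: extending Zacharias's mollified fourth moment from $L(\chi,\tfrac12)$ to the product $L(\chi^a,\tfrac12)L(\chi^b,\tfrac12)$, which the Cauchy--Schwarz split and the substitution $\psi=\chi^{a}$ above are designed to handle.
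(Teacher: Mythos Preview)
Your argument follows the paper's almost exactly: mollified Cauchy--Schwarz, Fourier expansion of the angular weight, and the three-way split $k=0$, $0<|k|\leq K$, $|k|>K$. The treatment of $k=0$ and the denominator (via Zacharias and Cauchy--Schwarz with the substitution $\psi=\chi^{a}$) are identical to the paper's. There are two points of divergence worth flagging.

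\textbf{Tail $|k|>K$.} You bound each term by Cauchy--Schwarz against the \emph{fourth} moment, obtaining $\epsilon c_4$. The paper instead pulls $|\sum_{|k|>K}\widehat\phi(k)\varepsilon(\chi)^k|\leq\epsilon$ inside and applies Cauchy--Schwarz to split into two mollified \emph{second} moments (the Iwaniec--Sarnak bound \eqref{eq:mollifiedsecondmomentclassical}), which is where the specific shape $\epsilon|ab|^{1/2}c_2'$ in the statement comes from. Your route is perfectly valid and in fact gives a cleaner $|ab|$-dependence, but it does not literally reproduce the constant $c_2'$ as written.

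\textbf{The case $(a,b)=(1,1)$.} Here your direct treatment has a genuine gap at $k=-1$. The formula you quote from Theorem~\ref{thm:introduction1} involves $\zeta\!\bigl(\tfrac{b+1}{2}\bigr)$, which diverges for $b=1$; the ``analogous expression when $b=1$'' you allude to is really the twisted $a+b=0$ asymptotic with a $\log q$ main term, and showing that the mollifier coefficients absorb this (so that the contribution is $O(1/\log q)$) requires a separate computation you do not carry out --- note that after the functional equation one gets $M_L(\chi)^{2}L(\chi,\tfrac12)L(\bar\chi,\tfrac12)$, not $|M_L(\chi)L(\chi,\tfrac12)|^{2}$, so the Iwaniec--Sarnak bound does not apply directly. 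The paper sidesteps this entirely by observing that $E(1,1;q,I)\geq E(1,b;q,I)$ for any $b>1$, which is immediate since $L(\chi,\tfrac12)L(\chi^{b},\tfrac12)\neq 0$ forces $L(\chi,\tfrac12)\neq 0$. You should adopt this reduction rather than attempt $(1,1)$ directly.
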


\begin{rem}\label{rem:side effect mollification technique}
    We observe the following beneficial side effect of the mollification technique. Without using it, we get an extra factor $\log q^A$, for some $A>0$, multiplied by the term $\epsilon\vert{ab}\vert^{1/2}$ in \eqref{eq:intrononvanishing}, which spoils even the $0$-proportion result.
\end{rem}

\begin{rem}
    The constants $c_2,c_2',c_4$, $c$ in Theorem \ref{thm:non-vanishing}, can be explicitly computed, for $c_2$ see Propositions \ref{prop:secondmollifiedmomentk=0}, for $c_4$ see Proposition \ref{prop:fourthmollifiedmomenet} and for $c_2'$ see the proof of the Theorem. For the constant $c$, see Proposition \ref{prop:secondmollifiedmomentkneq0}, note that $c_{2,-1}$ in this last statement can be made explicit in terms of $a,b$ and some value of the Riemann zeta function. 
\end{rem}

Since $L(\chi,\tfrac 12)$ vanishes if and only if $L(\bar\chi,\tfrac 12)$ does, we restrict ourselves to $a,b>0$. To complete the proof of Theorem \ref{thm:non-vanishing}, we start with the evaluations of the mollified fourth moment and the mollified second moment for $k=0$ and $k\neq 0$. Here the case $k=-1$ again requires special treatment because of the functional equation.

First, we introduce the mollified moments. Historically, mollifiers have been introduced by Selberg, \cite{Selberg1942}.
Let $q^{o(1)}\leq L=q^\alpha< q^{1/2}$ be a parameter to be chosen precisely later. For $\chi \in \widehat{(\ZZ/q\ZZ)^{\times}}$ we define the mollifier 
\[M_L(\chi) = \sum_{l\leq L}\frac{\mu(l)\chi(l)}{l^{1/2}}P\left(\frac{\log (L/l)}{\log L}\right),\]
where $P\in \RR[X]$ is some polynomial (of absolutely bounded degree), so that $P(0) = 0$ and $P(1) = 1$.
Applying the triangle inequality yields \begin{equation}\label{eq: trivial bound mollifier}
    \vert{M_L(\chi)}\vert \ll L^{1/2} \sup_{x\in [0,1]}\vert{P(x)}\vert.
\end{equation}
We furthermore need the following bound. 

\begin{lem}\label{lem: bound for a=1, with mollifier} Let $b\geq 1$ be an integer and $L>2$. Then we have the following.
\[\sum_{1\leq l_1,l_2\leq L}\frac{\mu(l_1)\mu(l_2)}{l_1l_2^{\frac{b+1}{2}}}P_L\left(\frac{\log(L/l_1)}{\log L}\right)P_L\left(\frac{\log(L/l_2)}{\log L}\right) \ll \frac{1}{\log L} \] 
\end{lem}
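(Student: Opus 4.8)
The plan is to observe that the summand factors completely over $l_1$ and $l_2$, so that the double sum in question equals the product
$$\left(\sum_{l_1\le L}\frac{\mu(l_1)}{l_1}P_L\!\left(\tfrac{\log(L/l_1)}{\log L}\right)\right)\left(\sum_{l_2\le L}\frac{\mu(l_2)}{l_2^{(b+1)/2}}P_L\!\left(\tfrac{\log(L/l_2)}{\log L}\right)\right).$$
It therefore suffices to prove that the first factor is $\ll 1/\log L$ and the second factor is $\ll 1$.

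For the second factor I will only use that $P_L$ is a polynomial of bounded degree, so that $\sup_{x\in[0,1]}|P_L(x)|\ll 1$. When $b\ge 2$ the exponent $(b+1)/2$ is $>1$, the series $\sum_{l}l^{-(b+1)/2}$ converges, and the bound $\ll 1$ follows immediately; when $b=1$ the second factor is literally equal to the first factor, and is covered by the estimate below.

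For the first factor I will expand $P_L(x)=\sum_{1\le j\le\deg P_L}c_jx^j$ (recall $P_L(0)=0$) and reduce matters to the classical estimate
$$\sum_{l\le L}\frac{\mu(l)}{l}\Big(\log\tfrac{L}{l}\Big)^{j}\ll_j (\log L)^{j-1}\qquad(j\ge 1).$$
This is standard; one proof writes the left-hand side, via Perron's formula in smoothed form (the integral being absolutely convergent since $j\ge 1$), as $\tfrac{j!}{2\pi i}\int_{(1)}\zeta(s+1)^{-1}L^s s^{-j-1}\,ds$, uses that $\zeta(1+s)^{-1}$ is holomorphic at $s=0$ with a simple zero there, and shifts the contour into the standard zero-free region of $\zeta$: the residue at $s=0$ is a polynomial in $\log L$ of degree $j-1$, and the shifted integral is $\ll_A(\log L)^{-A}$. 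Dividing the $j$-th term of the expansion by $(\log L)^j$ then yields a contribution $\ll_j 1/\log L$, and since $P_L$ has bounded degree the first factor is $\ll 1/\log L$. Combining the two estimates proves the lemma.

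I do not foresee a real obstacle; the only point that needs genuine input (rather than trivial estimation) is the cancellation in $\sum_{l\le L}\mu(l)l^{-1}(\log(L/l))^j$, i.e. a quantitative form of the prime number theorem — estimating this inner sum trivially would lose a factor of $\log L$ and only give $\ll 1$ for the whole expression. One may also simply quote this estimate from a standard reference such as \cite{IwaKowAnalyticNT} rather than reproducing the contour argument.
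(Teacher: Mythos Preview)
Your argument is correct and follows essentially the same route as the paper: factor the double sum, expand the polynomial $P_L$, and exploit PNT-level cancellation in the $l_1$-sum while treating the $l_2$-sum by absolute convergence (or, when $b=1$, by the same estimate). The only difference is packaging: you bound $\sum_{l\le L}\mu(l)l^{-1}(\log(L/l))^j\ll_j(\log L)^{j-1}$ directly via a contour shift, whereas the paper first expands $(\log(L/l))^j$ binomially and then invokes the three elementary estimates $\sum_{l\le L}\mu(l)(\log l)^j l^{-(r+1)/2}\ll_j 1$ for $r>1$, $\sum_{l\le L}\mu(l)(\log l)^{j+1}/l\ll_j 1$, and $\sum_{l\le L}\mu(l)/l\ll 1/\log L$, the latter two proved by summation by parts from the PNT in the form $\sum_{l\le L}\mu(l)/l\ll e^{-C\sqrt{\log L}}$.
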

\begin{proof}
    After opening the polynomial $P_L$ we see that it is sufficient to obtain the following bounds for $j\geq 0$ and $r>1$:
    \begin{equation*}
        \sum_{1\leq l\leq L}\frac{\mu(l)(\log l)^j}{l^{\frac{r+1}{2}}} \ll_j 1, \qquad \sum_{1\leq l\leq L}\frac{\mu(l)\log(l)^{j+1}}{l} \ll_j 1, \qquad \sum_{1\leq l\leq L}\frac{\mu(l)}{l} \ll \frac{1}{\log L}.
    \end{equation*}
    By a version of the Prime number theorem (see  \cite{Montgomery_Vaughan_2006}, (6.18)), there exists a constant $C>0$ such that 
    \[\sum_{1\leq l\leq L}\frac{\mu(l)}{l} \ll e^{-C\sqrt{\log L}} \ll \frac{1}{\log L}.\]
    Also, since $r>1$, one has 
    \[\sum_{1\leq l\leq L}\frac{\mu(l)(\log l)^j}{l^{\frac{r+1}{2}}} = \left(\frac{1}{\zeta}\right)^{(j)}\left(\frac{r+1}{2}\right)- \sum_{l>L}\frac{\mu(l)(\log l)^j}{l^{\frac{r+1}{2}}} \]
    and the tail is bounded by 
    \[\sum_{l>L}\frac{\mu(l)(\log l)^j}{l^{\frac{r+1}{2}}} \ll \int_{L}^{\infty}\frac{(\log t)^j}{t^{\frac{r+1}{2}}} \ \mathrm{d}t \ll_j (\log L)^jL^{-\frac{r-1}{2}}.\]
    Finally, we have by summation by parts,
    \begin{align*}\sum_{1\leq l\leq L}\frac{\mu(l)\log(l)^{j+1}}{l} =& -j\int_{1}^{\infty}\left(\sum_{1\leq l\leq t}\frac{\mu(l)}{l}\right)\frac{(\log t)^j}{t} \ \mathrm{d}t  \\ & + (\log L)^{j+1}\sum_{1\leq l\leq L}\frac{\mu(l)}{l} + j\int_{L}^{\infty}\left(\sum_{1\leq l\leq t}\frac{\mu(l)}{l}\right)\frac{(\log t)^j}{t} \ \mathrm{d}t.\end{align*}
    Again, the tail is bounded by \[(\log L)^{j+1}\sum_{1\leq l\leq L}\frac{\mu(l)}{l} + j\int_{L}^{\infty}\left(\sum_{1\leq l\leq t}\frac{\mu(l)}{l}\right)\frac{(\log t)^j}{t} \ \mathrm{d}t\ll_j (\log L)^{j+\frac{1}{2}}e^{-C\sqrt{\log L}}.\] 
\end{proof}

We define the mollified second moment 
\begin{equation}\label{def:mollifiedsecondmoment}
    \mathcal{Q}_2(a,b,k,M_L)= \frac{1}{q-1}\sumst_{\chi\mods{q}}L\left(\chi^a,\tfrac{1}{2}\right)L\left(\chi^b,\tfrac{1}{2}\right)M_L(\chi^a)M_L(\chi^b)\varepsilon(\chi)^k
\end{equation}
and the mollified fourth moment 
\begin{equation*}
    \mathcal{Q}_4(a,b,M_L)=\frac{1}{q-1}\sum_{\chi \mods{q}}\left\vert{L\left(\chi^a,\tfrac{1}{2}\right)L\left(\chi^b,\tfrac{1}{2}\right)M_L(\chi^a)M_L(\chi^b)}\right\vert^2.
\end{equation*}
 From Iwaniec and Sarnak, \cite{IwaniecSarnak+1999+941+952}, one deduces that for $P(X) = X$ one has
\begin{equation}\label{eq:mollifiedsecondmomentclassical}\frac{1}{q-1}\sum_{\chi\mods{q}}\vert{M_L(\chi)L(\chi,\tfrac 12)}\vert^2 \ll_\alpha 1.\end{equation}
The same bound holds for $P(X)=X^2$ (for example using \eqref{eq: bounds from zacharias} and Cauchy-Schwarz).
\begin{prop}[Evaluation of second mollified moment, $k=0$]\label{prop:secondmollifiedmomentk=0}For any positive integers $a,b$ there exist $c_2$ and $\eta >0$ so that 
    \begin{align*}
        \mathcal{Q}_2(a,b,0,M_L) &= P(1)^2 + O(Lq^{-\eta}).
         \end{align*}
\end{prop}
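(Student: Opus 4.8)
The plan is to reduce the statement directly to the twisted second moment evaluation of Proposition \ref{prop: Adaptation of second moment k=0}. First I would expand the two mollifiers, writing $M_L(\chi^a)=\sum_{l_1\leq L}\frac{\mu(l_1)}{l_1^{1/2}}\chi(l_1^a)P\!\left(\frac{\log(L/l_1)}{\log L}\right)$ and likewise for $M_L(\chi^b)$, and interchanging the two finite sums over $l_1,l_2$ with the sum over characters. This gives
\[
\mathcal{Q}_2(a,b,0,M_L)=\sum_{l_1,l_2\leq L}\frac{\mu(l_1)\mu(l_2)}{(l_1l_2)^{1/2}}P\!\left(\tfrac{\log(L/l_1)}{\log L}\right)P\!\left(\tfrac{\log(L/l_2)}{\log L}\right)\cdot\frac{1}{q-1}\sumst_{\chi\mods q}\chi(l_1^al_2^b)L\!\left(\chi^a,\tfrac12\right)L\!\left(\chi^b,\tfrac12\right).
\]
Since $L=q^\alpha<q$ and $q$ is a large prime, in particular larger than $a$ and $b$, we have $(abl_1^al_2^b,q)=1$ for every admissible pair $l_1,l_2$, so the inner average is exactly the quantity evaluated in Proposition \ref{prop: Adaptation of second moment k=0}(1), which applies here because $a,b>0$ forces $ab>0$.

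Next I would substitute that evaluation: the inner average equals $1+O(q^{-\eta})$ when $l_1=l_2=1$ and $O(q^{-\eta})$ otherwise, with $\eta=\eta(a,b)>0$ independent of $l_1,l_2$. The diagonal term $l_1=l_2=1$ contributes $\mu(1)^2P(1)^2\bigl(1+O(q^{-\eta})\bigr)=P(1)^2+O(q^{-\eta})$, which is the main term; so one may take $c_2=P(1)^2$.

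For the remaining terms I would estimate trivially. Using $|\mu|\leq1$, $\sup_{x\in[0,1]}|P(x)|\ll1$, and $\sum_{l\leq L}l^{-1/2}\ll L^{1/2}$, the contribution of the pairs $(l_1,l_2)\neq(1,1)$ is
\[
\ll q^{-\eta}\Bigl(\sum_{l_1\leq L}\frac{1}{l_1^{1/2}}\Bigr)\Bigl(\sum_{l_2\leq L}\frac{1}{l_2^{1/2}}\Bigr)\ll Lq^{-\eta},
\]
and the $O(q^{-\eta})$ error from the diagonal is absorbed into this. Hence $\mathcal{Q}_2(a,b,0,M_L)=P(1)^2+O(Lq^{-\eta})$, as claimed.

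The argument is essentially bookkeeping once Proposition \ref{prop: Adaptation of second moment k=0} is available, so I do not expect a serious obstacle. The two points requiring a little care are: (i) the error term of Proposition \ref{prop: Adaptation of second moment k=0} must be uniform over all $l_1,l_2\leq L$, which is built into its statement provided $\alpha$ is chosen small enough (as we are free to do here); and (ii) one must use the weights $l_1^{-1/2}l_2^{-1/2}$ rather than merely count the $\ll L^2$ off-diagonal pairs, so that the error is the sharper $O(Lq^{-\eta})$ instead of $O(L^2q^{-\eta})$.
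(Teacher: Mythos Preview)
Your proposal is correct and follows essentially the same approach as the paper: open the two mollifiers, apply Proposition~\ref{prop: Adaptation of second moment k=0}(1) to the inner character average, isolate the term $l_1=l_2=1$ as the main term $P(1)^2$, and bound the remaining $(l_1,l_2)$ trivially to obtain $O(Lq^{-\eta})$. Your write-up is in fact more explicit than the paper's (which simply cites the trivial mollifier bound \eqref{eq: trivial bound mollifier} for the error), and your two closing remarks on the uniformity in $l_1,l_2$ and on using the weights $l_i^{-1/2}$ to get $L$ rather than $L^2$ are well taken.
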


\begin{proof}After opening the mollifiers we write
   \begin{align*}
         & \mathcal{Q}_2(a,b,0,M_L) \\&=\frac{1}{q-1}  \sum_{l_1,l_2\leq L}\frac{\mu(l_1)\mu(l_2)}{(l_1l_2)^{1/2}}P\left(\frac{\log L/l_1}{\log L}\right)P\left(\frac{\log L/l_2}{\log L}\right)\sumst_{\chi\mods q}\chi(l_1^a)\chi(l_2^b)L(\chi^a,\tfrac 12)L(\chi^b,\tfrac 12).
    \end{align*}
By Proposition \ref{prop: Adaptation of second moment k=0} we have
$$\frac{1}{q-1}\sum_{\chi\mods q}\chi(l_1^a)\chi(l_2^b)L(\chi^a,\tfrac 12)L(\chi^b,\tfrac 12)=\begin{cases} 1 + O(q^{-\eta}) & \text{if }l_1=l_2=1 \\ 
       O(q^{-\eta}) &\text{else.} \end{cases}$$
Removing the contribution from the trivial character and using \eqref{eq: trivial bound mollifier} we conclude the proposition. 
\end{proof}

\begin{prop}[Evaluation of second mollified moment, $k\neq 0$]\label{prop:secondmollifiedmomentkneq0}For positive integers $a\leq b$ not both equal to 1 and any integer $k\neq 0$, there exist constants $c_{2,k} = c_{2,k}(a,b,P)>0$ and $\eta >0$ so that 
    \begin{align*}
        \mathcal{Q}_2(a,b,k,M_L) &\leq c_{2,k}Lq^{-\eta} &\text{if }(a,b,k)\neq (1,b,-1) \\
        \mathcal{Q}_2(a,b,k,M_L) &\leq c_{2,-1}\frac{1}{\log L} &\text{if }(a,b,k)=(1,b,-1).
         \end{align*}
\end{prop}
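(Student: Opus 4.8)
The plan is to reduce $\mathcal{Q}_2(a,b,k,M_L)$ to the twisted second moment $M_{a,b,k}(\xi;q)$ already evaluated in Theorem~\ref{thm:introduction1}, and then sum the outcome against the M\"obius-weighted mollifier coefficients. First I would open the two mollifiers and write
\[
\mathcal{Q}_2(a,b,k,M_L)=\sum_{1\leq l_1,l_2\leq L}\frac{\mu(l_1)\mu(l_2)}{(l_1l_2)^{1/2}}\,P\!\left(\tfrac{\log(L/l_1)}{\log L}\right)P\!\left(\tfrac{\log(L/l_2)}{\log L}\right)M_{a,b,k}(l_1^al_2^b;q).
\]
This is legitimate because $l_1,l_2\leq L=q^{\alpha}<q$ forces $(l_1^al_2^b,q)=1$, and because $a\leq b$, $(a,b)\neq(1,1)$ and $a,b>0$ are precisely the hypotheses under which Theorem~\ref{thm:introduction1} applies with $\xi=l_1^al_2^b$.

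In the case $(a,b,k)\neq(1,b,-1)$, I would observe that, since $a,b>0$, neither of the two exceptional branches in Theorem~\ref{thm:introduction1} can occur, so $M_{a,b,k}(l_1^al_2^b;q)=O_k(q^{-\eta})$, and this bound is uniform in $1\leq l_1,l_2\leq L$ because the estimates behind it (Corollary~\ref{cor: conclusion chap 4}) are uniform over all residues coprime to $q$. Combining this with the crude estimates $\sum_{l\leq L}l^{-1/2}\ll L^{1/2}$ and $\sup_{[0,1]}|P|\ll 1$ yields $|\mathcal{Q}_2(a,b,k,M_L)|\ll_k Lq^{-\eta}$, which is the asserted bound.

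For the remaining case $(a,b,k)=(1,b,-1)$, the hypotheses $a\leq b$ and $(a,b)\neq(1,1)$ force $b\geq2$, so Theorem~\ref{thm:introduction1} puts us in its first branch, $M_{1,b,-1}(l_1l_2^b;q)=\zeta\!\left(\tfrac{b+1}{2}\right)l_1^{-1/2}l_2^{-b/2}+O_k(q^{-\eta})$. Substituting this, the $O_k$-term contributes $O_k(Lq^{-\eta})$ exactly as above, and the main term equals
\[
\zeta\!\left(\tfrac{b+1}{2}\right)\sum_{1\leq l_1,l_2\leq L}\frac{\mu(l_1)\mu(l_2)}{l_1\,l_2^{(b+1)/2}}\,P\!\left(\tfrac{\log(L/l_1)}{\log L}\right)P\!\left(\tfrac{\log(L/l_2)}{\log L}\right),
\]
which is $\ll1/\log L$ by Lemma~\ref{lem: bound for a=1, with mollifier} (applicable since $b\geq1$). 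Choosing $\alpha$ small enough that $\alpha<\eta$ makes $Lq^{-\eta}=q^{\alpha-\eta}\ll1/\log L$, so that $|\mathcal{Q}_2(1,b,-1,M_L)|\ll1/\log L$, as desired; the precise admissible value of $\alpha$ is fixed only later, in the proof of Theorem~\ref{thm:non-vanishing}.

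I do not anticipate a genuine obstacle, since all the analytic content — the evaluation of the twisted moment and the cancellation in the M\"obius sum — is already available from Theorem~\ref{thm:introduction1} and Lemma~\ref{lem: bound for a=1, with mollifier}. The two points that require a little care are checking that the error term $O_k(q^{-\eta})$ in Theorem~\ref{thm:introduction1} is uniform in $l_1,l_2$ so that it survives summation against $\sum_{l\leq L}l^{-1/2}$, and verifying that the exponent $l_1^{-1}l_2^{-(b+1)/2}$ produced by the main term matches the shape treated by Lemma~\ref{lem: bound for a=1, with mollifier}.
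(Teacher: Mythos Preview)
Your proposal is correct and follows essentially the same route as the paper: open the mollifiers, apply Theorem~\ref{thm:introduction1} term by term, sum the $O_k(q^{-\eta})$ error trivially to get $Lq^{-\eta}$, and in the exceptional case $(1,b,-1)$ handle the main term via Lemma~\ref{lem: bound for a=1, with mollifier}. The only cosmetic difference is that in the $(1,b,-1)$ case the paper reaches back to the explicit error $O((X/L^{b+1})^{-1/2+\varepsilon})$ from the proof of Proposition~\ref{prop: Adaptation of second moment k=0} rather than quoting the packaged $O_k(q^{-\eta})$ from Theorem~\ref{thm:introduction1}; both lead to the same conclusion after choosing $\alpha$ small.
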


\begin{proof}
    We again open the mollifiers and obtain 
    \begin{align*}
          \mathcal{Q}_2(a,b,k,M_L) &= \sum_{l_1,l_2\leq L}\frac{\mu(l_1)\mu(l_2)}{(l_1l_2)^{1/2}}P\left(\frac{\log L/l_1}{\log L}\right)P\left(\frac{\log L/l_2}{\log L}\right)M_{a,b,k}(l_1^al_2^b;q). 
    \end{align*}
   For $a,b\neq 1$, 
   since $k\neq 0$, we have by Theorem \ref{thm:introduction1} that $M_{a,b,k}(l_1^al_2^b;q) = O(q^{-\eta})$ for some $\eta >0$ independent of $k$, but possibly depending on $a,b$. 
In the second case we have by Theorem \ref{thm:introduction1}, with the error term as in the proof of Proposition \ref{prop: Adaptation of second moment k=0}, that $M_{1,b,-1}(l_1l_2^b;q)$ is estimated as $$\frac 12 \frac{1}{\sqrt{l_1l_2^b}}\zeta\left(\frac{b+1}{2}\right)+O\left(\left(\frac{X}{L^{b+1}}\right)^{-1/2+\varepsilon}\right).$$
Plugging this into our original question (with $P(X)=X^2$) we are led to investigate 
\begin{align*}
\zeta\left(\frac{b+1}{2}\right) \sum_{l_1,l_2\leq L}&\frac{\mu(l_1)\mu(l_2)}{l_1l_2^{\frac{b+1}{2}}}P\left(\frac{\log L/l_1}{\log L}\right)P\left(\frac{\log L/l_2}{\log L}\right)\\ &+O\left(\frac{L^{\frac{b+1}{2}}}{X^{1/2}}\sum_{l_1,l_2\leq L}\frac{\mu(l_1)\mu(l_2)}{l_1l_2^{\frac{b+1}{2}}} P\left(\frac{\log L/l_1}{\log L}\right)P\left(\frac{\log L/l_2}{\log L}\right)\right). 
\end{align*}
Using \eqref{eq: trivial bound mollifier} and $P(x)=x^2$ the error term is bounded by 
$ \ll \frac{L^{\frac{b+3}{2}}}{X^{1/2-\varepsilon}}.$ For $L=q^\alpha$ and $X$ as in \eqref{eq: choice X and Y in fkm}, choosing $\alpha$ and $\beta$ small enough we see that this is bounded by $q^{-\eta}$ for some $\eta>0$.
Finally, by Lemma \ref{lem: bound for a=1, with mollifier} we conclude that the main term is bounded by $\ll \frac{1}{\log L }$. 
\end{proof}

\begin{prop}[Bounds for the fourth mollified moment]\label{prop:fourthmollifiedmomenet}For any non-zero integers $a, b$ and $P(X)=X^2$ there exists $c_4 >0$ (independent of $q$) so that 
     \begin{align*}
              \mathcal{Q}_4(a,b,M_L) \leq  c_4.
         \end{align*}
\end{prop}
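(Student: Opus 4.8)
The plan is to reduce the mollified fourth moment $\mathcal{Q}_4(a,b,M_L)$ to the mollified fourth moment of Dirichlet $L$-functions, whose boundedness is the main result of Zacharias \cite{Zacharias2016MollificationOT}. The reduction requires nothing more than one Cauchy--Schwarz inequality to decouple the exponents $a$ and $b$, followed by the change of variables $\chi\mapsto\chi^a$ (resp.\ $\chi\mapsto\chi^b$) inside the character group.

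Concretely, I would first write the summand of $\mathcal{Q}_4$ as the product $\big|L(\chi^a,\tfrac12)M_L(\chi^a)\big|^2\big|L(\chi^b,\tfrac12)M_L(\chi^b)\big|^2$ and apply the Cauchy--Schwarz inequality to the average over $\chi\mods q$, which gives
\[
\mathcal{Q}_4(a,b,M_L)\le\prod_{c\in\{a,b\}}\left(\frac{1}{q-1}\sum_{\chi\mods q}\big|L(\chi^c,\tfrac12)M_L(\chi^c)\big|^4\right)^{1/2}.
\]
It then suffices to bound each factor by $O_c(1)$. Since $q$ is prime, $\chi\mapsto\chi^c$ is an endomorphism of $\widehat{(\ZZ/q\ZZ)^{\times}}$ whose image is the subgroup $H_c$ of $c$-th powers and all of whose fibres have cardinality $d_c=(c,q-1)\le|c|$, and $M_L(\chi^c)$ is precisely the mollifier $M_L$ evaluated at the character $\chi^c$. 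Hence, all terms being non-negative,
\[
\frac{1}{q-1}\sum_{\chi\mods q}\big|L(\chi^c,\tfrac12)M_L(\chi^c)\big|^4=\frac{d_c}{q-1}\sum_{\psi\in H_c}\big|L(\psi,\tfrac12)M_L(\psi)\big|^4\le\frac{|c|}{q-1}\sum_{\psi\mods q}\big|L(\psi,\tfrac12)M_L(\psi)\big|^4.
\]

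The final step is to quote Zacharias \cite{Zacharias2016MollificationOT} (see also \eqref{eq: bounds from zacharias}): for $L=q^\alpha$ with $\alpha$ small enough, $\frac{1}{q-1}\sumst_{\psi\mods q}\big|L(\psi,\tfrac12)M_L(\psi)\big|^4\ll 1$, with implied constant depending only on $\alpha$ and $\deg P$; the principal character, which we separate off beforehand, contributes $O(L^2/q)=o(1)$ by \eqref{eq: trivial bound mollifier} together with the elementary bound $L(\chi_0,\tfrac12)\ll 1$. Combining the three displays yields $\mathcal{Q}_4(a,b,M_L)\le C\sqrt{|ab|}=:c_4$ with $C$ absolute, which in particular is independent of $q$.

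I expect the only genuine point of care to be bookkeeping: one must check that our mollifier $M_L$ (a M\"obius-weighted Dirichlet polynomial of length $L=q^\alpha$ with the logarithmic polynomial factor $P$, here $P(X)=X^2$) and, above all, the size of $\alpha$ lie in the admissible range of \cite{Zacharias2016MollificationOT}. This is the same smallness-of-$\alpha$ constraint that the other propositions of this section impose, so $\alpha$ will simply be taken small enough to meet all of them at once; the Cauchy--Schwarz step and the fibre count for $\chi\mapsto\chi^c$ contribute only the harmless factor $(a,q-1)(b,q-1)\le|ab|$ and are otherwise elementary.
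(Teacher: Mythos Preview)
Your proposal is correct and follows essentially the same route as the paper: Cauchy--Schwarz to decouple $a$ and $b$, then the change of variables $\chi\mapsto\chi^c$ (producing the factor $(c,q-1)\le|c|$), and finally an appeal to Zacharias's mollified fourth moment bound \eqref{eq: bounds from zacharias}. You are in fact slightly more explicit than the paper, which writes down the factor $(ab)^{1/2}$ without spelling out the fibre count and does not separately discuss the principal character.
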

\begin{proof}
    By \cite{Zacharias2016MollificationOT}, Theorem 1.2 we know that for $P(X)=X^2$ and any $0<\lambda<\frac{11}{8064}$ we have
    \begin{equation}\label{eq: bounds from zacharias}
     \frac{1}{q-1}\sumst_{\chi\mods q}|L\left(\chi,\tfrac 12\right)M_L(\chi)|^4=\sum_{i=0}^4d_i\lambda^{-i}+O_\lambda\left(\frac{1}{\log q}\right)\end{equation}
    for some calculable coefficients $d_i\in \RR$.
    Using Cauchy-Schwarz we obtain 
    \begin{align*}
    \mathcal{Q}_4(a,b,M_L)&\leq \left(\frac{1}{q-1}\sumst_{\chi\mods q}|L\left(\chi^a,\tfrac 12\right)M_L(\chi^a)|^4\right)^{\frac12}\left(\frac{1}{q-1}\sumst_{\chi\mods q}|L\left(\chi^b,\tfrac 12\right)M_L(\chi^b)|^4\right)^{\frac12} \\
    & \leq (ab)^{1/2}\left(\sum_{i=0}^4d_i\lambda^{-i}+O_\lambda\left(\frac{1}{\log q}\right)\right)  \\
    &\leq c_4.\qedhere\end{align*} 
\end{proof}

\begin{proof}[Proof of Theorem \ref{thm:non-vanishing}]

Let $I \subset (-\pi,\pi]$ be a non-empty interval. Let $a,b$ be positive integers.
We are interested in lower bounds for the following quantity. \[E(a,b;q,I)= \vert{ \{\chi  \mods q\ \text{ non-trivial s.t. }  L(\chi^a,\tfrac 12)L(\chi^b,\tfrac 12)\neq  0,\  \theta(\chi) \in I  \}}\vert\] 
    Let $\phi \in C_c^{\infty}(-\pi,\pi)$ be such that $0\leq \phi\leq 1_I$ and \[\int_{-\pi}^{\pi}\phi(\theta)\ \mathrm{d}\theta \geq \frac{\mu(I)}{2}.\] Let $\epsilon >0$ and let $K = K(\epsilon)$ be so that $\sum_{\vert{k}\vert \geq K}\vert{\widehat{\phi}(k)}\vert < \epsilon,$
    where $\widehat{\phi}(k)$ denotes the $k$-th Fourier coefficient of $\phi$. 
     We have
     \begin{align*}
          &\left\vert{\sum_{\vert{k}\vert >K}\widehat{\phi}(k)\mathcal{Q}_2(a,b,k,L;q)}\right\vert \\&\qquad\qquad= \frac{1}{q-1}\left\vert{\sumst_{\chi \mods{q}}L\left(\chi^a,\tfrac{1}{2}\right)L\left(\chi^b,\tfrac{1}{2}\right)M_L(\chi^a)M_L(\chi^b)\sum_{\vert{k}\vert >K}\widehat{\phi}(k)\varepsilon(\chi)^k}\right\vert\\ &\qquad\qquad\leq \epsilon \left(\sum_{\chi\mods{q}}\vert{M_L(\chi^a)L(\chi^a,\frac{1}{2})}\vert^2\right)^{1/2}\left(\sum_{\chi\mods{q}}\vert{M_L(\chi^b)L(\chi^b,\tfrac{1}{2})}\vert^2\right)^{1/2}\\ &\qquad\qquad\leq \epsilon (ab)^{1/2}\sum_{\chi\mods q}\vert{M_L(\chi)L(\chi,\tfrac{1}{2})}\vert^2\leq \epsilon({ab})^{1/2}c'_2.
    \end{align*}
     From Proposition \ref{prop:secondmollifiedmomentkneq0} we have the following:

    \begin{align*}
    \vert{\sum_{0< \vert{k}\vert \leq K(\epsilon)}\widehat{\phi}(k)\mathcal{Q}_2(a,b,k,M_L)}\vert &\leq Lq^{-\eta} \sum_{\substack{1\leq \vert{k}\vert \leq K(\epsilon)\\ k\neq -1}}c_{2,k}\vert{\widehat{\phi}(k)}\vert + c_{2,-1}(\log L)^{-1} \\ &\leq Lq^{-\eta}C(\epsilon) + \frac{c_{2,-1}\widehat{\phi}(-1)}{\log L}   
    \end{align*}
     Note that the second summand $\frac{c_{2,-1}\widehat{\phi}(1)}{\log L}$ appears only if $1\in \{a,b\}$.
    Hence, using Proposition \ref{prop:secondmollifiedmomentk=0} and Proposition \ref{prop:fourthmollifiedmomenet} we obtain

    \begin{align*}
        \widehat{\phi}(0)c_2-\epsilon (ab)^{1/2}c'_2-\frac{c_{2,-1}\widehat{\phi}(-1)}{\log L} &- Lq^{-\eta}\sum_{1\leq \vert{k}\vert \leq K}c_{2,k}\vert{\widehat{\phi}(k)}\vert  \leq \vert{\sum_{k\in \ZZ}\widehat{\phi}(k)\mathcal{Q}_2(a,b,k,M_L)}\vert \\& \leq \left(\frac{1}{q-1}\sum_{\substack{\chi\mods{q}\\L(\chi^a,\tfrac{1}{2})\neq 0\neq L(\chi^b,\tfrac{1}{2})}}\vert{\phi(\theta(\chi))}\vert^2\right)^{1/2}\mathcal{Q}_4(a,b,M_L) \\ &\leq \left(\frac{E(a,b;q,I)}{q-1}\right)^{1/2} c_4.
    \end{align*}
For $(a,b)=(1,1)$ we do not have an asymptotic formula, but we deduce the same non-vanishing result for $L(\chi,\tfrac 12)L(\chi,\tfrac 12)$ from the non-vanishing for $L(\chi,\tfrac 12)L(\chi^b,\tfrac 12)$ with $ b>1$. 
   \end{proof}

\printbibliography 
\nocite{Blomeretal}

\end{document}